\title[]{Counting Hamilton cycles in Dirac hypergraphs}
\date{\today}
\author[Stefan~Glock]{Stefan~Glock}
\author[Stephen~Gould]{Stephen~Gould}
\author[Felix~Joos]{Felix~Joos}
\author[Daniela~K\"uhn]{Daniela~K\"uhn}
\author[Deryk~Osthus]{Deryk~Osthus}
\thanks{This project has received partial funding from the European Research
Council (ERC) under the European Union's Horizon 2020 research and innovation programme (grant agreement no. 786198, D.~K\"uhn and D.~Osthus).
The research leading to these results was also partially supported by the EPSRC, grant nos. EP/N019504/1 (S.~Glock and D.~K\"uhn) and EP/S00100X/1 (D.~Osthus), and the Deutsche Forschungsgemeinschaft (DFG, German Research Foundation), grant nos. 339933727 and 428212407 (F.~Joos), as well as the Royal Society and the Wolfson Foundation (D.~K\"uhn).}
\newtheorem{theorem}{Theorem}[section]
\newtheorem{prop}[theorem]{Proposition}
\newtheorem{lemma}[theorem]{Lemma}
\newtheorem{cor}[theorem]{Corollary}
\newtheorem{conj}[theorem]{Conjecture}
\theoremstyle{definition}
\newtheoremstyle{claimstyle}{5pt}{5pt}{\em}{5pt}{\em}{:}{5pt}{}
\theoremstyle{claimstyle}
\newtheoremstyle{stepstyle}{10pt}{5pt}{\em}{0pt}{\em}{:}{5pt}{}
\theoremstyle{stepstyle}
\numberwithin{equation}{section}
\definecolor{darkblue}{rgb}{0,0,0.5}
\def\noproof{{\unskip\nobreak\hfill\penalty50\hskip2em\hbox{}\nobreak\hfill%
       $\square$\parfillskip=0pt\finalhyphendemerits=0\par}\goodbreak}
\def\endproof{\noproof\bigskip}
\newdimen\margin
\def\textno#1&#2\par{
   \margin=\hsize
   \advance\margin by -4\parindent
          \setbox1=\hbox{\sl#1}
   \ifdim\wd1 < \margin
      $$\box1\eqno#2$$
   \else
      \bigbreak
      \hbox to \hsize{\indent$\vcenter{\advance\hsize by -3\parindent
      \it\noindent#1}\hfil#2$}
      \bigbreak
   \fi}
\def\lateproof#1{\removelastskip\penalty55\medskip\noindent\setcounter{claim}{0}\setcounter{step}{0}{\bf Proof of #1. }} 
\begin{document}

\newcommand{\new}[1]{\textcolor{red}{#1}}
\def\COMMENT#1{}
\def\TASK#1{}
\newcommand{\APPENDIX}[1]{}
\newcommand{\NOTAPPENDIX}[1]{#1}
\renewcommand{\APPENDIX}[1]{#1}                    
\renewcommand{\NOTAPPENDIX}[1]{}                   
\newcommand{\todo}[1]{\begin{center}\textbf{to do:} #1 \end{center}}

\def\eps{{\varepsilon}}
\newcommand{\ex}{\mathbb{E}}
\newcommand{\pr}{\mathbb{P}}
\newcommand{\cB}{\mathcal{B}}
\newcommand{\cA}{\mathcal{A}}
\newcommand{\cE}{\mathcal{E}}
\newcommand{\cS}{\mathcal{S}}
\newcommand{\cF}{\mathcal{F}}
\newcommand{\cG}{\mathcal{G}}
\newcommand{\bL}{\mathbb{L}}
\newcommand{\bF}{\mathbb{F}}
\newcommand{\bZ}{\mathbb{Z}}
\newcommand{\cH}{\mathcal{H}}
\newcommand{\cC}{\mathcal{C}}
\newcommand{\cM}{\mathcal{M}}
\newcommand{\bN}{\mathbb{N}}
\newcommand{\bR}{\mathbb{R}}
\def\O{\mathcal{O}}
\newcommand{\cP}{\mathcal{P}}
\newcommand{\cQ}{\mathcal{Q}}
\newcommand{\cR}{\mathcal{R}}
\newcommand{\cJ}{\mathcal{J}}
\newcommand{\cL}{\mathcal{L}}
\newcommand{\cK}{\mathcal{K}}
\newcommand{\cD}{\mathcal{D}}
\newcommand{\cI}{\mathcal{I}}
\newcommand{\cV}{\mathcal{V}}
\newcommand{\cT}{\mathcal{T}}
\newcommand{\cU}{\mathcal{U}}
\newcommand{\cW}{\mathcal{W}}
\newcommand{\cX}{\mathcal{X}}
\newcommand{\cY}{\mathcal{Y}}
\newcommand{\cZ}{\mathcal{Z}}
\newcommand{\1}{{\bf 1}_{n\not\equiv \delta}}
\newcommand{\eul}{{\rm e}}
\newcommand{\Erd}{Erd\H{o}s}
\newcommand{\cupdot}{\mathbin{\mathaccent\cdot\cup}}
\newcommand{\whp}{whp }
\newcommand{\bX}{\mathcal{X}}
\newcommand{\bV}{\mathcal{V}}
\newcommand{\ordsubs}[2]{(#1)_{#2}}
\newcommand{\unordsubs}[2]{\binom{#1}{#2}}
\newcommand{\ordelement}[2]{\overrightarrow{\mathbf{#1}}\left({#2}\right)}
\newcommand{\ordered}[1]{\overrightarrow{\mathbf{#1}}}
\newcommand{\reversed}[1]{\overleftarrow{\mathbf{#1}}}
\newcommand{\weighting}[1]{\mathbf{#1}}
\newcommand{\weightel}[2]{\mathbf{#1}\left({#2}\right)}
\newcommand{\unord}[1]{\mathbf{#1}}
\newcommand{\ordscript}[2]{\ordered{{#1}}_{{#2}}}
\newcommand{\revscript}[2]{\reversed{{#1}}_{{#2}}}

\newcommand{\doublesquig}{%
  \mathrel{%
    \vcenter{\offinterlineskip
      \ialign{##\cr$\rightsquigarrow$\cr\noalign{\kern-1.5pt}$\rightsquigarrow$\cr}%
    }%
  }%
}

\newcommand{\defn}{\emph}

\newcommand\restrict[1]{\raisebox{-.5ex}{$|$}_{#1}}

\newcommand{\prob}[1]{\mathrm{\mathbb{P}}\left[#1\right]}
\newcommand{\probb}[1]{\mathrm{\mathbb{P}}_{b}\left[#1\right]}
\newcommand{\expn}[1]{\mathrm{\mathbb{E}}\left[#1\right]}
\newcommand{\expnb}[1]{\mathrm{\mathbb{E}}_{b}\left[#1\right]}
\newcommand{\probxj}[1]{\mathrm{\mathbb{P}}_{x(j)}\left[#1\right]}
\newcommand{\expnxj}[1]{\mathrm{\mathbb{E}}_{x(j)}\left[#1\right]}
\def\gnp{G_{n,p}}
\def\G{\mathcal{G}}
\def\lflr{\left\lfloor}
\def\rflr{\right\rfloor}
\def\lcl{\left\lceil}
\def\rcl{\right\rceil}

\newcommand{\qbinom}[2]{\binom{#1}{#2}_{\!q}}
\newcommand{\binomdim}[2]{\binom{#1}{#2}_{\!\dim}}

\newcommand{\grass}{\mathrm{Gr}}

\newcommand{\brackets}[1]{\left(#1\right)}
\def\sm{\setminus}
\newcommand{\Set}[1]{\{#1\}}
\newcommand{\set}[2]{\{#1\,:\;#2\}}
\newcommand{\krq}[2]{K^{(#1)}_{#2}}
\newcommand{\ind}[1]{$\mathbf{S}(#1)$}
\newcommand{\indcov}[1]{$(\#)_{#1}$}
\def\In{\subseteq}

\maketitle
\begin{abstract}
A tight Hamilton cycle in a $k$-uniform hypergraph ($k$-graph)~$G$ is a cyclic ordering of the vertices of~$G$ such that every set of~$k$ consecutive vertices in the ordering forms an edge. R\"{o}dl, Ruci\'{n}ski, and Szemer\'{e}di proved that for $k\geq 3$, every $k$-graph on~$n$ vertices with minimum codegree at least~$n/2+o(n)$ contains a tight Hamilton cycle. We show that the number of tight Hamilton cycles in such $k$-graphs is~$\exp(n\ln n-\Theta(n))$. As a corollary, we obtain a similar estimate on the number of Hamilton $\ell$-cycles in such $k$-graphs for all $\ell\in\{0,\dots,k-1\}$, which makes progress on a question of Ferber, Krivelevich and Sudakov.
\end{abstract}
\section{Introduction}
\subsection{Counting Hamilton cycles in graphs}
The problem of determining sufficient conditions for the existence of Hamilton cycles in graphs is one of the central topics in graph theory, and has given rise to extensive research.
A classical result of Dirac~\cite{D52} states that graphs on~$n\geq3$ vertices with minimum degree at least~$n/2$ (\textit{Dirac graphs}) contain a Hamilton cycle, and there are natural families of graphs which show that~$n/2$ is best possible.

Bollob\'{a}s~\cite{B95a} and Bondy~\cite{B95b} asked for an asymptotic estimate for the number of distinct Hamilton cycles in Dirac graphs.
In 2003, S\'{a}rk\"{o}zy, Selkow, and Szemer\'{e}di~\cite{SSS03} made substantial progress on this question by showing that $n$-vertex Dirac graphs contain~$\exp(n\ln n-\Theta(n))$ Hamilton cycles.
They also posed the question of whether this is the right order of magnitude for graphs satisfying other conditions known to ensure Hamiltonicity, like those of Ore, P\'{o}sa, and Chv\'{a}tal (see~\cite{B95b}).
Further, they conjectured that the minimum number of Hamilton cycles in $n$-vertex Dirac graphs is $\exp(n\ln n -n(1+\ln 2) -o(n))$.

Cuckler and Kahn~\cite{CK09b} analysed a self-avoiding random walk on the vertices of Dirac graphs to verify this conjecture as a consequence of a more precise result. 
Moreover, in a separate paper~\cite{CK09a}, they used entropy considerations to provide an upper bound for the number of Hamilton cycles in Dirac graphs.
More precisely, writing~$\Psi(G)$ to denote the number of distinct Hamilton cycles of a graph~$G$,
the main results of~\cite{CK09a} and~\cite{CK09b} together state that for any $n$-vertex Dirac graph~$G$, we have $\log_{2}\Psi(G)=2H(G)-n\log_{2}e-o(n)$, where~$H(G)$ is the \textit{entropy} of~$G$.
Combined with the result~\cite[Theorem 1.3]{CK09b} that Dirac graphs~$G$ on~$n$ vertices satisfy $H(G)\geq\frac{n}{2}\log_{2}\delta(G)$, this confirms the conjecture of~\cite{SSS03}.
Moreover, the parameter~$H(G)$ is the maximum of a concave function subject to linear constraints, and can thus be efficiently estimated.
This yields an efficient algorithm for estimating~$\Psi(G)$ for Dirac graphs~$G$, to within subexponential factors.
\subsection{Hamilton cycles in hypergraphs}
The study of Hamilton cycles in hypergraphs was initiated in a 1976 paper of Bermond, Germa, Heydemann, and Sotteau~\cite{B76}.
For $k$-uniform hypergraphs ($k$-\textit{graphs}), we may sensibly define a cycle in a number of ways (see for example~\cite{KO14, RR10, Z16}).
Let~$k\geq 2$ be an integer and let $\ell\in\{0,\dots,k-1\}$.
We say that a $k$-uniform hypergraph~$C$ is an $\ell$-cycle if there exists a cyclic ordering of the vertices of~$C$ such that every edge of~$C$ consists of~$k$ consecutive vertices and such that every pair of consecutive edges (in the natural ordering of the edges) intersects in precisely~$\ell$ vertices.
A \textit{Hamilton $\ell$-cycle} of a $k$-graph~$G$ is a subgraph $C\subseteq G$, where~$C$ is a $k$-uniform $\ell$-cycle with $V(C)=V(G)$.
Thus, if~$G$ contains a Hamilton $\ell$-cycle, then~$k-\ell$ divides~$|V(G)|$.
Moreover, if $\ell=0$ then a Hamilton $\ell$-cycle is just a perfect matching of~$G$.
We usually call a $(k-1)$-cycle a \textit{tight cycle}, and we say that a Hamilton $(k-1)$-cycle of a $k$-graph~$G$ is a \textit{tight Hamilton cycle} of~$G$.

We wish to generalise the study of Hamilton cycles in Dirac graphs to the setting of hypergraphs, and so we now need a natural hypergraph generalisation of the notion of degree.
Given a $k$-graph~$G$ and a set $S\subseteq V(G)$ of $k-1$ vertices, we say that the \textit{codegree} of~$S$ in~$G$, denoted~$d_{G}(S)$ (or simply~$d(S)$ when~$G$ is clear from the context), is the number of edges of~$G$ containing~$S$.
For a $k$-graph~$G$, we write~$\delta(G)$ for the minimum codegree over all $(k-1)$-sets $S\subseteq V(G)$, and refer to this quantity as the \textit{minimum codegree} of~$G$.

Katona and Kierstead~\cite{KK99} gave a sufficient condition on the minimum codegree for $k$-graphs to have a tight Hamilton cycle.
Further, they conjectured that for all integers~$k\geq2$, a minimum codegree of at least~$n/2$ suffices for $n$-vertex $k$-graphs.
R\"{o}dl, Ruci\'{n}ski, and Szemer\'{e}di proved an asymptotic version~\cite{RRS06} of the~$k=3$ case of this conjecture, and then an exact version for large~$n$~\cite{RRS11}.
The work of~\cite{RRS06} was shortly afterwards generalised to all integers~$k\geq 3$ by the same authors~\cite{RRS08}.
Further results on tight Hamilton cycles can be found e.g. in~\cite{AL, RRRSS19}.
For $(k-\ell)\nmid k$, K\"{u}hn, Mycroft, and Osthus~\cite{KMO10} asymptotically determined the threshold for the existence of a Hamilton $\ell$-cycle (this generalised previous results in~\cite{KO06b,KKMO11,HS10}).
Subsequently several exact results were proved in~\cite{CM14,HZ15}.
It turns out that the threshold is significantly below~$n/2$ if $(k-\ell)\nmid k$.
For all other cases it follows from the result of~\cite{RRS08} that the threshold is asymptotically~$n/2$.
\subsection{Our main result}
Ferber, Krivelevich, and Sudakov~\cite{FKS16} were the first to generalise the study of counting Hamilton cycles to the hypergraph setting (and also considered perfect matchings). They proved for $1\leq\ell < k/2$ that if a $k$-graph $G$ on $n$ vertices with $(k-\ell)\mid n$ satisfies $\delta(G)\geq \alpha n$ for some $\alpha>1/2$, then $G$ contains $(1-o(1))^{n}\cdot n! \cdot\left(\frac{\alpha}{\ell!(k-2\ell)!}\right)^{\frac{n}{k-\ell}}$ Hamilton $\ell$-cycles. 
As a natural question, they asked whether this can be generalized to all $\ell$. 

We adapt some ideas from the random walk analysis of~\cite{CK09b} to show that any large $k$-graph whose minimum codegree is slightly above~$n/2$ contains a large number of tight Hamilton cycles.
\begin{theorem}\label{hamcycs}
For a fixed integer $k\geq 2$ and a fixed constant~$\gamma>0$, the number of tight Hamilton cycles of a $k$-graph~$G$ on~$n$ vertices with $\delta(G)\geq (1/2+\gamma)n$ is $\exp(n\ln n-\Theta(n))$.
\end{theorem}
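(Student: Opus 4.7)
The upper bound $\Psi(G)\leq n!/(2n)=\exp(n\ln n-\Theta(n))$ is immediate, since each tight Hamilton cycle is determined by a cyclic ordering of $V(G)$ up to a $2n$-fold symmetry. The work lies in the matching lower bound, for which I would adapt the Cuckler--Kahn self-avoiding random walk framework from graphs to the tight cycle setting on $k$-graphs. Write $c:=1/2+\gamma$, so $\delta(G)\geq cn$. The strategy is to construct a tight Hamilton cycle vertex by vertex and to lower-bound the product of admissible extension choices by $\exp(n\ln n-\Theta(n))$.

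The construction proceeds in three phases. First, I would use the R\"{o}dl--Ruci\'{n}ski--Szemer\'{e}di absorption machinery to reserve a short absorbing tight path $A$ of size $o(n)$ together with a small reservoir $R$ such that: (a) $A$ can absorb any small subset $X\subseteq V(G)\setminus(A\cup R)$ into a tight path on $V(A)\cup X$; and (b) any two compatible $(k-1)$-tuples outside $R$ can be joined by a short tight path whose internal vertices lie in $R$. The number of choices of $A$ and $R$ contributes only a factor $e^{o(n)}$. Second, starting from one end of $A$, I would build a long tight path in $V(G)\setminus(A\cup R)$ by repeatedly extending the current $(k-1)$-tail $T$ with an unused vertex $v$ such that $T\cup\{v\}\in E(G)$. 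After $i$ vertices have been used, at least $cn-i$ valid extensions remain, which is $\Omega(n)$ provided $i\leq(c-\gamma/2)n$. To preserve this lower bound throughout the walk, I would extend simultaneously from both ends of $A$ so that neither branch exceeds $(c-\gamma/2)n$ vertices, retaining at least $\gamma n/2$ choices at each step. Third, once $n-o(n)$ vertices have been placed, I would apply (b) to connect the two branches through $R$ and (a) to absorb the remaining $o(n)$ vertices, producing a tight Hamilton cycle; this closure contributes only a polynomial factor.

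Multiplying at least $\gamma n/2$ choices over $n-o(n)$ steps and dividing by the polynomial overcount of each Hamilton cycle (from the choice of starting $(k-1)$-tuple and orientation) yields the claimed $\exp(n\ln n-\Theta(n))$. The principal obstacle is maintaining $\Omega(n)$ admissible extensions throughout the walk: the codegree condition alone guarantees positive unused extensions only up to step $cn$, so any straightforward one-ended extension breaks down in the second half. The double-ended scheme circumvents this, but coordinating the two growing tails at the meeting point is a genuine rigidity issue specific to tight cycles and will likely require an averaging argument, e.g.\ showing that for a typical $(k-1)$-tuple most other $(k-1)$-tuples can be joined to it through $R$, so that essentially any pair of tails produced by the two random branches admits many closings. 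Alternatively one could run a Cuckler--Kahn-style weighted random walk driven by a fractional tight Hamilton cycle in $G$, whose analysis yields the extension count on average via entropy rather than pointwise.
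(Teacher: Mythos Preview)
Your double-ended extension scheme does not work, and the flaw is exactly at the point you flagged as the principal obstacle. When you extend one branch, the vertices already used by \emph{both} branches are forbidden. If branch~1 has consumed $i_1$ vertices and branch~2 has consumed $i_2$ vertices, then the current tail $T$ of branch~1 has at least $cn-i_1-i_2$ unused neighbours, not $cn-i_1$. So the quantity that governs the number of extensions is the \emph{total} number of placed vertices, precisely as in the one-ended walk; splitting the walk into two halves buys nothing. Once $i_1+i_2$ passes $(c-\gamma/2)n\approx n/2$ you lose the $\Omega(n)$ lower bound, and the greedy count degenerates to $((cn)!)^{1\pm o(1)}=\exp((1/2+\gamma)n\ln n-\Theta(n))$, which is too small.

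The paper resolves this by the alternative you mention only in passing: it runs a weighted self-avoiding walk whose transition probabilities come from a \emph{normal perfect fractional matching} $\weighting{x}$ (each edge has weight $\Theta(n^{-(k-1)})$), constructed via a switching argument. The point is not merely to guarantee a positive number of extensions, but to show that with high probability the set of visited vertices looks like a uniformly random subset of $V$. Concretely, one couples short segments of the self-avoiding walk to the corresponding simple (non-self-avoiding) $\weighting{x}$-walk, proves rapid mixing of the latter, and uses that under the stationary distribution each vertex is hit with probability essentially $1/n$ (this is where perfection of $\weighting{x}$ is used). Hence for every $(k-1)$-set $\unord{S}$ the walk deletes the correct proportion of $N(\unord{S})$, so the residual graph remains $\gamma'$-Dirac after each $\sqrt{n}$-segment. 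This is what allows one to maintain $\Omega(n)$ choices for all $n-o(n)$ steps, and it is the genuine content of the lower bound; the greedy codegree bound alone cannot carry the argument past the halfway mark.
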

Notice that we claim this number of tight Hamilton cycles holds with equality, up to the exponential error bound~$\exp(-\Theta(n))$.
We discuss this error bound further in the concluding remarks.
It will suffice to show that the lower bound holds, since any $k$-graph on~$n$ vertices trivially has at most~$(n-1)!/2$ distinct tight Hamilton cycles.
Theorem~\ref{hamcycs} easily yields the following corollary about the number of Hamilton $\ell$-cycles in a $k$-graph whose codegrees are slightly above~$n/2$, for each $\ell\in\{0,\dots,k-1\}$.
\begin{cor}\label{hamellcycles}
For a fixed integer $k\geq2$ and a fixed constant $\gamma>0$, the number of Hamilton $\ell$-cycles of a $k$-graph~$G$ on~$n$ vertices with $(k-\ell)\mid n$ and $\delta(G)\geq (1/2+\gamma)n$ is
\begin{enumerate}[label=\upshape(\roman*)]
    \item $\exp\left(\left(1-\frac{1}{k}\right)n\ln n -\Theta(n)\right)$, if $\ell=0$;
    \item $\exp(n\ln n -\Theta(n))$, if $\ell\in[k-1]$.
\end{enumerate}
\end{cor}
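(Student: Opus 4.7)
My plan is to derive Corollary~\ref{hamellcycles} from Theorem~\ref{hamcycs} by a simple projection argument. Given any tight Hamilton cycle $T$ of $G$ with cyclic vertex ordering $v_1,\dots,v_n$ and any offset $i\in\{0,\dots,k-\ell-1\}$, the $k$-sets $\{v_{1+i+j(k-\ell)},\dots,v_{k+i+j(k-\ell)}\}$ for $0\le j<n/(k-\ell)$ (indices modulo~$n$) form a Hamilton $\ell$-cycle of $G$, provided $(k-\ell)\mid n$. The aim is to show that this projection is nearly injective, so that the count $\Psi_{k-1}(G)=\exp(n\ln n-\Theta(n))$ from Theorem~\ref{hamcycs} transfers to $\Psi_\ell(G)$ up to an $\exp(\Theta(n))$ factor (for $\ell\ge 1$) or an $\exp((n\ln n)/k+\Theta(n))$ factor (for $\ell=0$), yielding the stated bounds.

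The upper bounds are immediate from $\Psi_\ell(G)\le\Psi_\ell(K_n^{(k)})$, by dividing the $n!$ linear orderings of $V(K_n^{(k)})$ by the symmetries preserving the corresponding $\ell$-cycle. For $\ell=k-1$ this gives $(n-1)!/2$; for $\ell\in\{1,\dots,k-2\}$ the additional symmetries (rotations of the $\ell$-cycle and internal permutations within edges) contribute at most an $\exp(\Theta(n))$ factor, still yielding $\exp(n\ln n-\Theta(n))$; and for $\ell=0$ one gets $n!/((k!)^{n/k}(n/k)!)=\exp((1-1/k)n\ln n-\Theta(n))$ by Stirling.

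For the lower bounds, let $\cT$ denote the set of \emph{labeled} tight Hamilton cycles of $G$, that is, vertex sequences $(v_1,\dots,v_n)$ such that $\{v_s,v_{s+1},\dots,v_{s+k-1}\}\in E(G)$ for every $s$ (indices cyclic); then $|\cT|=2n\Psi_{k-1}(G)$. Each element of $\cT$ projects, via the $i=0$ recipe above, to a Hamilton $\ell$-cycle of $G$. I then bound the multiplicity of this projection. For $\ell\in[k-1]$, given an $\ell$-cycle $L$, a labeled tight Hamilton cycle projecting to $L$ is determined by choosing which of the $n/(k-\ell)$ edges of $L$ occupies positions $1,\dots,k$, a traversal direction, and an ordering of the $k$ vertices within each of the $n/(k-\ell)$ edges; this yields multiplicity at most $\frac{2n}{k-\ell}\cdot(k!)^{n/(k-\ell)}=\exp(\Theta(n))$, so $\Psi_\ell(G)\ge|\cT|/\exp(\Theta(n))=\exp(n\ln n-\Theta(n))$. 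For $\ell=0$, given a perfect matching $M$, the multiplicity is at most $(n/k)!\cdot(k!)^{n/k}=\exp((n/k)\ln n+\Theta(n))$ (ordering the $n/k$ edges of $M$ and the $k$ vertices within each edge), yielding $\Psi_0(G)\ge\exp((1-1/k)n\ln n-\Theta(n))$.

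The only delicate step is the multiplicity count, in particular the observation that for $\ell=0$ the perfect matching carries no cyclic structure on its edges, which introduces the extra $(n/k)!$ factor and thus the reduced exponent $(1-1/k)n\ln n$. Beyond Theorem~\ref{hamcycs}, no new ideas are required.
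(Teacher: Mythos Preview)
Your proposal is correct and follows essentially the same approach as the paper. Both derive the bounds from Theorem~\ref{hamcycs} via a projection/double-counting argument: each tight Hamilton cycle yields $k-\ell$ Hamilton $\ell$-cycles, and each Hamilton $\ell$-cycle arises from at most $(k!)^{n/(k-\ell)}$ tight Hamilton cycles (with the extra $(n/k)!$ factor for $\ell=0$); the only cosmetic difference is that you work with labeled tight cycles whereas the paper works with unlabeled ones, which shifts a harmless $O(n)$ factor around.
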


This addresses the above mentioned question of Ferber, Krivelevich and Sudakov (though our result is less precise than theirs for $\ell<k/2$).
We remark that the $-\frac{1}{k}$-term for the case of perfect matchings is missing in \cite[Theorem 1.1]{FKS16}, but follows from their proof.
Finally, recall that the minimum codegree threshold for the existence of Hamilton $\ell$-cycles can be below $n/2$ when $\ell<k-1$. It would thus be a natural question to extend the counting results to this larger range.
For the rest of the paper, we focus on counting tight Hamilton cycles.
\section{Sketch of the proof of Theorem \ref{hamcycs}}\label{Sketch}
In this section we provide a rough sketch of the proof of our main result.
\subsection{Basic notation}
We first need to introduce some notation that we use throughout the paper.
For a set~$V$ and a natural number~$\ell$, we write~$\unordsubs{V}{\ell}$ to denote the set of all unordered $\ell$-subsets of distinct elements of~$V$.
We write~$\ordsubs{V}{\ell}$ to denote the set of all ordered $\ell$-subsets of distinct elements of~$V$, so that $\left|\ordsubs{V}{\ell}\right| = \ell!\big|\unordsubs{V}{\ell}\big|$.
We usually use boldface capital letters to denote unordered subsets~$\unord{S}\in\unordsubs{V}{\ell}$ of the fixed size~$\ell$, and we exclusively use boldface capital letters with arrows above to denote ordered subsets~$\ordered{S}\in\ordsubs{V}{\ell}$.
When an ordered tuple~$\ordered{S}\in\ordsubs{V}{\ell}$ is first given, the arrow will exclusively point to the right. We may subsequently drop the arrow to denote the unordered version of this~$\ell$-set,
so that if~$\ordered{S}$ is the ordered sequence of~$\ell$ distinct elements $(x_{1},\dots,x_{\ell})$, then~$\unord{S}$ subsequently used without the arrow denotes the unordered set $\{x_{1},\dots,x_{\ell}\}$.
Moreover, we write~$\reversed{S}$ to denote the ordered $\ell$-tuple obtained by reversing the ordering of~$\ordered{S}$,
so that $\reversed{S}=(x_{\ell},x_{\ell-1},\dots,x_{1})$.
Let~$G=(V,E)$ be a hypergraph and let~$U\subseteq V(G)$.
Then the sub(hyper)graph of~$G$ \textit{induced} by~$U$, denoted~$G[U]$, is the hypergraph~$H=(V(H), E(H))$, where~$V(H)=U$, and~$E(H)$ is precisely the set of all edges of~$G$ containing only vertices in~$U$.
We write~$G-U$ to denote the hypergraph~$G'\subseteq G$ obtained from~$G$ by deleting the vertices in~$U$ and all edges of~$G$ containing any vertex in~$U$.
We say that a $k$-graph~$P$ is a $k$-\textit{uniform tight path} (or simply \textit{tight path} if~$k$ is clear from the context) if~$P$ admits an ordering of its vertices $V(P)=\{v_{1},\dots,v_{m}\}$ such that $E(P)=\{\{v_{i},\dots,v_{i+k-1}\}\colon1\leq i \leq m-(k-1)\}$.
The \textit{ends} of~$P$ are the ordered $(k-1)$-tuples $(v_{1},\dots,v_{k-1})$ and $(v_{m},\dots,v_{m-k+2})$.
We also say that~$P$ \textit{connects} the ends of~$P$.
We say that a tight path~$P$ with~$m$ edges (and thus with $m+(k-1)$ vertices) is an~$m$-\textit{path}, and has \textit{length}~$m$.
For a $k$-graph~$G$ and an integer~$t\geq k$ we say that a sequence $(v_{1}, v_{2}, \dots, v_{t})$ of (not necessarily distinct) vertices is a \textit{walk} in~$G$ if every set of~$k$ consecutive vertices in the sequence forms an edge.
Let~$\gamma>0$ be a constant.
A $k$-graph~$G$ on~$n$ vertices is called~$\gamma$-\textit{Dirac} if $\delta(G) \geq (1/2 + \gamma)n$.
Finally, given a hypergraph~$G$, we say a weighting of the edges $\weighting{x}\colon E(G)\rightarrow\bR^{+}$ is a \textit{fractional matching} if we have $\sum_{e\ni v}\weightel{x}{e}\leq1$ for every~$v\in V(G)$, and we say that~$\weighting{x}$ is \textit{perfect} if $\sum_{e\ni v}\weightel{x}{e} =1$ for every~$v\in V(G)$.
\subsection{Outline of the argument}
Let~$\gamma>0$, and let~$G$ be an $n$-vertex $k$-graph satisfying $\delta(G)\geq (1/2+\gamma)n$, where~$k\geq 2$ and~$n$ is sufficiently large.
The main step of our proof is to count tight paths of length $n-o(n)$ in~$G$.
Using the framework of R\"{o}dl, Ruci\'{n}ski, and Szemer\'{e}di~\cite{RRS08}, which is based on the absorption technique, we can complete each such long path into a tight Hamilton cycle of~$G$.
The key lemma (Lemma \ref{iteration}) in the proof of Theorem~\ref{hamcycs} states that we can find many paths of length~$\sqrt{n}$ in~$G$,
all starting at the same ordered~$(k-1)$-tuple $\ordered{S}\in\ordsubs{V(G)}{k-1}$, such that for each such path the remainder of~$G$ still has minimum codegree at least~$\left(\frac{1}{2}+\gamma-n^{-2/3}\right)(n-\sqrt{n})$.
The proof of this `iteration lemma' is the sole focus of Section~\ref{countingshort}, and involves the analysis of a self-avoiding random walk~$\cX$ on the vertices of~$G$.
In order to prove the iteration lemma, we first need to show that~$G$ admits a perfect fractional matching which is `normal', which means that each edge of~$G$ has weight~$\Theta(n^{-k+1})$.
We construct such a normal perfect fractional matching~$\weighting{x}$ in Section~\ref{fractional} via a probabilistic argument based on switchings (it is not clear how to generalise the entropy-based approach of~\cite{CK09b} to the hypergraph setting).

In Section~\ref{countingshort}, we use~$\weighting{x}$ to define the transition probabilities of the random walk~$\cX$.
We construct~$\cX$ such that an outcome of~$\cX$ corresponds to a tight path in~$G$ of length~$\sqrt{n}$ which starts at some given~$\ordered{S}\in\ordsubs{V(G)}{k-1}$.
We wish to count the number of outcomes of~$\cX$ which essentially leave the~$\gamma$-Dirac property of the remaining graph intact.
Such outcomes of~$\cX$ are called \textit{good} walks.
It will suffice to show that~$\cX$ is good with probability at least~$1/2$.
To do this, we will show that it is likely that the vertices that~$\cX$ visits look roughly like a uniformly random subset of the vertices of~$G$, of appropriate size.
We will show that the behaviour of~$\cX$ over a small number of steps can be assumed to be very close to the behaviour of a modified version of~$\cX$, in which the walk is allowed to revisit vertices.
We use the normality property of~$\weighting{x}$ to show that the modified walk mixes rapidly, and we use the fact that~$\weighting{x}$ is a perfect fractional matching to show that, under the stationary distribution, each vertex is essentially visited with the same probability.
Thus, roughly speaking, the distribution of the vertices for~$\cX$ to visit at any step is close to uniform on~$V(G)$.
We give a more thorough sketch of the proof of the iteration lemma in Section~\ref{propersketch}.

In Section~\ref{countinglong}, we focus on repeatedly applying the iteration lemma to obtain many long paths in~$G$.
Let~$P$ be a~$\sqrt{n}$-path in~$G$ obtained from the first iteration of the iteration lemma, let~$\unord{T}$ be the unordered set consisting of the final~$k-1$ vertices of~$P$, and let $G_{P}\coloneqq G-(V(P)\setminus\unord{T})$.
The idea is that, since the~$\gamma$-Dirac property is essentially intact in $G_{P}$,
we may find a new normal perfect fractional matching $\weighting{x}_{P}\colon E(G_{P})\rightarrow \bR^{+}$ and can thus apply the iteration lemma to~$G_{P}$.
In this second iteration, we insist that all the walks~$\cX$ start at the final ordered $(k-1)$-tuple of~$P$.
Then we may attach any of the paths~$P'$ from the second iteration onto~$P$ to obtain a longer tight path in~$G$ which still leaves the~$\gamma$-Dirac property of the remaining graph essentially intact.
We show that we may iterate this process until fewer than~$n^{7/8}$ vertices of~$G$ remain, and we multiplicatively use the count of paths given by the iteration lemma to deduce that the number of resulting long paths of~$G$ is essentially the number given in the statement of Theorem~\ref{hamcycs}.
(Observe that each combination of paths yields a different concatenated path).
Finally then, we complete the proof of Theorem~\ref{hamcycs} by absorbing the vertices left over by each such long path into a tight Hamilton cycle of~$G$.
\section{Preliminaries}\label{prelims}
In the following section, we collect further notation, as well as some results that we will use throughout the paper.
\subsection{Notation}
Let~$\gamma>0$ be a constant.
We say that a $k$-graph~$G$ is an $(n,k,\gamma)$-\textit{graph} if~$G$ has~$n$ vertices and~$G$ is~$\gamma$-Dirac.
When~$G$ is clear from the context, we often write~$V$ instead of~$V(G)$.
For a $k$-graph~$G$ and $\unord{S}\in\unordsubs{V}{k-1}$, we write $N_{G}(\unord{S})\coloneqq\{v\in V\colon \unord{S}\cup\{v\}\in E(G)\}$.
We say that~$\unord{S}$ is \textit{isolated} if $N_{G}(\unord{S})=\emptyset$, and that~$\unord{S}$ is \textit{non-isolated} if~$\unord{S}$ is not isolated.
For a positive integer~$\ell$, we say that a walk $(v_{1},\dots,v_{\ell+k-1})$ on the vertices of~$G$ is an $\ell$-\textit{walk}.
Let $\ordered{S},\ordered{T}\in\ordsubs{V}{k-1}$.
We say an~$\ell$-walk $(v_{1},\dots,v_{\ell+k-1})$ in~$G$ is an $\ell$-\textit{walk from}~$\ordered{S}$ \textit{to}~$\ordered{T}$ if $\ordered{S}=(v_{1},\dots,v_{k-1})$ and $\ordered{T}=(v_{\ell+1},\dots,v_{\ell+k-1})$.
A \textit{matching}~$M$ of a $k$-graph~$G$ is a collection of vertex-disjoint edges of~$G$, and we say that~$M$ is \textit{perfect} if every vertex $v\in V(G)$ is included in some edge of~$M$.
Let~$M$ be a matching in a~$k$-graph~$G$. Where it has no effect on the argument, we sometimes abuse notation and identify~$M$ with the subgraph~$M'\subseteq G$ satisfying $E(M')=M$ and $V(M')=\bigcup_{e\in M}e$.
For finite sets~$U\subseteq V$ and a function $f:V\rightarrow\bR$, we define $f(U)\coloneqq\sum_{u\in U}f(u)$, and $||f||_{\infty}\coloneqq\max_{v\in V}f(v)$.
We write $\mathds{1}_{U}:V\rightarrow\{0,1\}$ to be the \textit{indicator function} for~$U$, defined by $\mathds{1}_{U}(x)=1$ if~$x\in U$, and $\mathds{1}_{U}(x)=0$ otherwise.
For an event~$\cE$ in a probability space, we write~$\cE^{c}$ to denote the complement of~$\cE$.
We write~$\log x$ to mean~$\log_{2}x$, and we write~$\ln x$ to mean~$\log_{e}x$.
We also write $a=(1\pm b)c$ to mean $(1-b)c < a < (1+b)c$.
For a natural number~$n$ we write $[n]\coloneqq\{1,\dots,n\}$.
We write $x\ll y$ to mean that for any $y\in (0,1]$ there exists an $x_{0}\in (0,1)$ such that for all $0<x\leq x_{0}$ the subsequent statement holds.
Hierarchies with more constants are defined similarly and should be read from the right to the left.
Constants in hierarchies will always be real numbers in $(0,1]$.
Moreover, if $1/x$ appears in a hierarchy, this implicitly means that $x$ is a natural number.
More precisely, $1/x\ll y$ means that for any $y\in (0,1]$, there exists an $x_{0}\in\bN$ such that for all $x\in\bN$ with $x\geq x_{0}$ the subsequent statement holds.
We assume large numbers to be integers if this does not affect the argument.
\subsection{Probabilistic tools}
In this subsection we collect some probabilistic definitions and results that we will need throughout the paper.

The \textit{total variation distance} between two probability measures~$\mu$ and~$\nu$ on a finite set~$S$ is
$d_{TV}(\mu, \nu) := \sup\{|\mu(T)-\nu(T)|\colon T\subseteq S\}$. It is well-known that the total variation distance satisfies
\begin{equation}\label{eq:variation2}
d_{TV}(\mu, \nu)  = \frac{1}{2}\sum_{s \in S}\left|\mu(s)-\nu(s)\right|=\inf\{\prob{X \neq Y}\},
\end{equation}
where the infimum is taken over coupled random variables~$X$ and~$Y$ having laws~$\mu$ and~$\nu$ respectively (see \cite[p.119]{D10} for more details).
We write~$d_{TV}(X, Y) $ for the total variation distance between the laws of the random variables~$X$ and~$Y$.

Next, we need an inequality of~\cite{CK09b}, which follows easily from Azuma's inequality.
\begin{lemma}[{\cite[Lemma 5.3]{CK09b}}]\label{azuma}
Let $X_{0}, X_{1}, ...$ be random variables taking values in a set~$V$, and let $g\colon V \rightarrow \mathds{R}$.
Then for any~$t>0$ and any $p, q\in\bN$, we have
\[
\prob{\left|\sum_{k=0}^{p}\left(g\left(X_{k+q}\right) - \expn{g\left(X_{k+q}\right) | X_{0}, ..., X_{k}}\right)\right|> t||g||_{\infty}\sqrt{pq}} < 2qe^{-t^{2}/2}.
\]
\end{lemma}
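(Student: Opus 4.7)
The plan is to reduce to the standard Azuma--Hoeffding martingale concentration inequality. The main obstacle is that, setting $Y_k := g(X_{k+q}) - \ex[g(X_{k+q}) \mid X_0,\ldots,X_k]$, the sequence $(Y_k)$ does \emph{not} form a martingale difference sequence with respect to the natural filtration $\cF_k := \sigma(X_0,\ldots,X_k)$: the term $Y_k$ depends on $X_{k+q}$ and is therefore not $\cF_{k+1}$-measurable, so one cannot apply Azuma directly to $\sum_k Y_k$.

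The standard workaround is to slice the sum along arithmetic progressions of common difference~$q$. First I would partition the index set $\{0,1,\ldots,p\}$ into residue classes modulo~$q$ and write $\sum_{k=0}^{p}Y_k=\sum_{r=0}^{q-1}S_r$, where $S_r:=\sum_{j\colon r+jq\leq p}Y_{r+jq}$. If the full sum exceeds $t||g||_\infty\sqrt{pq}$ in absolute value, then at least one $|S_r|$ exceeds $t||g||_\infty\sqrt{p/q}$; a union bound over the $q$ residue classes will eventually supply the leading factor of~$q$ in the conclusion.

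For fixed~$r$, the subsequence $(Y_{r+jq})_{j\geq 0}$ is a genuine martingale difference sequence with respect to the coarser filtration $\cF_j^{(r)}:=\sigma(X_0,\ldots,X_{r+jq})$: the term $Y_{r+jq}$ is $\cF_{j+1}^{(r)}$-measurable since $X_{r+(j+1)q}$ has entered the filtration by time $j+1$, and $\ex[Y_{r+jq}\mid\cF_j^{(r)}]=0$ by construction. Conditional on $\cF_j^{(r)}$, the variable $g(X_{r+(j+1)q})$ lies in $[-||g||_\infty,||g||_\infty]$, so $Y_{r+jq}$ has conditional range at most $2||g||_\infty$. The sharp (conditional-range) form of Azuma--Hoeffding, applied to the at most $\lceil (p+1)/q\rceil$ terms of $S_r$, then yields $\prob{|S_r|>t||g||_\infty\sqrt{p/q}}\leq 2\exp(-t^2/2)$, and a union bound over the $q$ residue classes gives the stated bound. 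The only technical care required is to invoke the conditional-range Hoeffding--Azuma estimate (which contributes $(b_i-a_i)^2/8$ per step to the exponent) rather than the cruder bounded-difference version, since the latter would lose a factor of $4$ in the exponent and yield only $e^{-t^2/8}$ per residue class.
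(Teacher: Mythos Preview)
The paper does not prove this lemma; it is quoted from Cuckler--Kahn~\cite{CK09b} with the remark that it ``follows easily from Azuma's inequality,'' so there is no proof in the paper to compare against. Your residue-class decomposition is precisely the standard argument (and indeed the one in~\cite{CK09b}): slicing $\{0,\dots,p\}$ into arithmetic progressions modulo~$q$ turns each sub-sum into a genuine martingale difference sequence with respect to the filtration $\sigma(X_0,\dots,X_{r+jq})$, after which the sharp conditional-range form of Azuma--Hoeffding and a union bound over the~$q$ classes give the result.

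One minor technical point: each residue class has $m_r=\lfloor(p-r)/q\rfloor+1$ terms, which can exceed $p/q$ by~$1$, so the Azuma exponent you obtain is $-t^2(p/q)/(2m_r)$ rather than exactly $-t^2/2$. This is the usual harmless off-by-one (and in the paper's application, where $p=\sqrt{n}-(\ln n)^2$ and $q=(\ln n)^2$, the ratio $p/(qm_r)$ is $1-o(1)$); one can either absorb it into the strict inequality, replace the threshold $\sqrt{pq}$ by $\sqrt{(p+1)q}$, or simply note that the lemma as stated is what~\cite{CK09b} records. Your argument is otherwise complete and correct.
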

We will need the following Chernoff-type bound (see~\cite{CL06} and~\cite{JLR00} for example).
\begin{lemma}\label{chernoff}
Let~$X$ be a random variable with a binomial or hypergeometric distribution. Suppose~$\ex[X]>0$ and let~$t>0$. Then $\prob{X\leq\expn{X}-t} \leq e^{-t^{2}/(2\expn{X})}$.
\end{lemma}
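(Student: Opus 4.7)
The plan is to use the standard moment generating function (MGF) approach: apply Markov's inequality to $e^{-\lambda X}$ for a suitably chosen $\lambda > 0$, optimise in $\lambda$, and then invoke a short analytic inequality to pass from the resulting Kullback--Leibler style exponent to the clean Gaussian-type bound $e^{-t^2/(2\mu)}$. For the hypergeometric case, I would appeal to Hoeffding's classical reduction to the binomial case. The main (modest) obstacle is the final analytic estimate, since the raw optimisation yields only the sharper but less usable Kullback--Leibler form.

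First, set $\mu := \expn{X}$ and assume $0 < t \leq \mu$ (the inequality is trivial otherwise). For any $\lambda > 0$, Markov's inequality gives
\[
\prob{X \leq \mu - t} = \prob{e^{-\lambda X} \geq e^{-\lambda(\mu - t)}} \leq e^{\lambda(\mu - t)}\, \expn{e^{-\lambda X}}.
\]
In the binomial case $X \sim \mathrm{Bin}(n, p)$ with $\mu = np$, the independence of the Bernoulli summands and the inequality $1 + x \leq e^x$ yield
\[
\expn{e^{-\lambda X}} = \bigl(1 - p(1 - e^{-\lambda})\bigr)^n \leq e^{-\mu(1 - e^{-\lambda})}.
\]
Setting $\lambda := -\ln(1 - t/\mu) > 0$ to balance the two terms, the bound rearranges to
\[
\prob{X \leq \mu - t} \leq \exp\bigl(-\mu\bigl[(1 - y)\ln(1 - y) + y\bigr]\bigr), \qquad y := t/\mu \in [0, 1).
\]

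The desired estimate $e^{-t^2/(2\mu)} = e^{-\mu y^2/2}$ then follows from the analytic inequality $(1 - y)\ln(1 - y) + y \geq y^2/2$. This can be verified by setting $f(y) := (1 - y)\ln(1 - y) + y - y^2/2$ and noting $f(0) = 0$, $f'(y) = -\ln(1 - y) - y$ with $f'(0) = 0$, and $f''(y) = y/(1 - y) \geq 0$ on $[0, 1)$; successive integration gives $f' \geq 0$ and hence $f \geq 0$.

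For the hypergeometric case, where $X$ counts how many elements of a fixed subset of the ground set fall inside a uniformly random sample of prescribed size, I would use Hoeffding's observation that the MGF of $X$ is pointwise dominated by the MGF of the binomial variable with the same mean. Inserting this bound into the preceding calculation yields the same estimate with no further work; equivalently, one could appeal to the fact that the hypergeometric indicator variables are negatively associated (Dubhashi--Ranjan), under which the binomial-style Chernoff tail bounds transfer directly.
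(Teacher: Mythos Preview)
Your argument is correct: the MGF/Cram\'er--Chernoff method with the substitution $\lambda=-\ln(1-t/\mu)$ gives the Kullback--Leibler exponent, your convexity check of $f(y)=(1-y)\ln(1-y)+y-y^2/2$ is clean, and invoking Hoeffding's sampling-without-replacement comparison (or negative association) for the hypergeometric case is the standard and valid reduction.

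There is nothing to compare against in the paper itself: Lemma~\ref{chernoff} is stated there as a quoted fact with references (to Chung--Lu and Janson--\L{}uczak--Ruci\'nski) and no proof is given. Your write-up supplies exactly the textbook derivation those references contain, so it is entirely consistent with what the paper intends.
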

We conclude this section with a result which shows that most small sets in an $(n,k,\gamma)$-graph inherit the Dirac condition. \APPENDIX{The proof is given in the appendix.}\NOTAPPENDIX{We omit the proof; for details, see the appendix of the arXiv version of the paper.}
\begin{prop}\label{tidytool}
Let $1/n\ll 1/m \ll\gamma, 1/k, 1/t, 1/\ell$, where~$\ell\mid n$, and let~$G$ be an $(n,k,\gamma)$-graph.
Let~$\cP$ be a partition of~$V$ into $\ell$-sets and let $\cP_{0}\subseteq\cP$ be of size $|\cP_{0}|= t$.
Pick $\cP'\subseteq\cP\setminus\cP_{0}$ of size $m$ uniformly at random.
Then $\prob{G[\bigcup(\cP_{0}\cup\cP')]\ \text{is}\ \gamma/2\text{-Dirac}}\geq1-e^{-\sqrt{m}}$.
\end{prop}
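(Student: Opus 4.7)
I would bound $\prob{\bigcup_S B_S}$ via a union bound over $(k-1)$-subsets $S\subseteq V$, where $B_S$ is the event that $S\subseteq U$ and $|N_G(S)\cap U|<(1/2+\gamma/2)|U|$ with $U:=\bigcup(\cP_0\cup\cP')$. Writing $\prob{B_S}=\prob{S\subseteq U}\prob{B_S\mid S\subseteq U}$ and noting that $\sum_{S}\prob{S\subseteq U}=\binom{(t+m)\ell}{k-1}$, it suffices to prove $\prob{B_S\mid S\subseteq U}\leq\binom{(t+m)\ell}{k-1}^{-1}e^{-\sqrt{m}}$ for every fixed $S$. Let $\cP_S\subseteq\cP$ be the classes meeting $S$ and set $\cP_S^*:=\cP_S\setminus\cP_0$. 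Conditioning on $S\subseteq U$ amounts to conditioning on $\cP_S^*\subseteq\cP'$, under which $\cP'=\cP_S^*\cup\cP''$ with $\cP''$ a uniformly random $(m-|\cP_S^*|)$-subset of $\cP\setminus(\cP_0\cup\cP_S^*)$.

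For the conditional mean, write $V_0:=\bigcup(\cP_0\cup\cP_S^*)$, so $|V_0|\leq(t+k-1)\ell$ and $|N_G(S)\setminus V_0|\geq(1/2+\gamma)n-|V_0|$. The random contribution to $|N_G(S)\cap U|$ is $\sum_{P\in\cP''}a_P$ with $a_P:=|N_G(S)\cap P|\in[0,\ell]$; its mean equals $(m-|\cP_S^*|)|N_G(S)\setminus V_0|/(|\cP|-t-|\cP_S^*|)$, which under the hierarchy $1/n\ll 1/m\ll\gamma,1/k,1/t,1/\ell$ is at least $(1/2+\gamma-o(1))(m-(k-1))\ell$. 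A short calculation then gives $\ex[|N_G(S)\cap U|\mid S\subseteq U]\geq (1/2+3\gamma/4)|U|$, leaving a margin of at least $\gamma|U|/4$ for fluctuations.

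For concentration, observe that conditionally on $S\subseteq U$, $|N_G(S)\cap U|$ equals a deterministic constant plus $\sum_{P\in\cP''}a_P$, a sum of values in $[0,\ell]$ sampled without replacement from $\cP\setminus(\cP_0\cup\cP_S^*)$. A Hoeffding-type inequality for sampling without replacement --- or, equivalently, Lemma~\ref{azuma} applied to the Doob martingale that reveals a uniformly random ordering of $\cP\setminus(\cP_0\cup\cP_S^*)$, using that swapping two positions changes the sum by at most $\ell$ --- yields
\[
\prob{|N_G(S)\cap U|<(1/2+\gamma/2)|U|\mid S\subseteq U}\leq 2\exp(-\Omega(m\gamma^2/\ell^2)).
\]
Since $1/m\ll\gamma,1/k,1/t,1/\ell$, the prefactor $\binom{(t+m)\ell}{k-1}\leq((t+m)\ell)^{k-1}$ is absorbed into the exponential, and the union bound yields $\prob{\bigcup_S B_S}\leq e^{-\sqrt{m}}$. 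The main obstacle is the concentration step: the paper explicitly cites only a Chernoff bound for hypergeometric or binomial variables (Lemma~\ref{chernoff}), while our summands take values throughout $[0,\ell]$; the Doob-martingale reformulation above extracts what is needed from Lemma~\ref{azuma}, which is already in hand.
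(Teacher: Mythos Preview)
Your overall strategy matches the paper's: take a union bound over $(k-1)$-sets $\unord{S}$, factor out the probability that $\unord{S}$ lands in $U$, and then show concentration of $|N_G(\unord{S})\cap U|$ around its mean. The difference is in how concentration is established. You work directly in the without-replacement model and appeal to Hoeffding's inequality for sampling without replacement. The paper instead passes to a product model---include each class of $\cP\setminus\cP_0$ independently with probability $p=m/|\cP\setminus\cP_0|$---so that the relevant count becomes a weighted sum $\sum_A c_A Y_A$ of independent Bernoullis (with $c_A=|A\cap N_G(\unord{S})|\in[0,\ell]$), to which a weighted Chernoff bound applies; it then transfers back to the uniform model by conditioning on $\{|X|=m\}$ and using $\pr[|X|=m]\ge 1/(4\sqrt{m})$. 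Your route is a bit cleaner in avoiding this transfer, at the cost of importing an inequality not stated in the paper.

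One caveat: Lemma~\ref{azuma} is not the right stand-in. That lemma controls $\bigl|\sum_k\bigl(g(X_{k+q})-\expn{g(X_{k+q})\mid X_0,\dots,X_k}\bigr)\bigr|$, and the inner conditional expectations are themselves random, so it does not directly bound the deviation of $\sum_{P\in\cP''}a_P$ from its \emph{fixed} mean; the ``swapping two positions'' argument you describe is McDiarmid's bounded-differences inequality, which is not what Lemma~\ref{azuma} asserts. Either cite Hoeffding for sampling without replacement directly, or add the easy estimate that $\sum_{k}\expn{a_{X_{k+1}}\mid X_1,\dots,X_k}$ differs deterministically from $m'\bar a$ by $O(\ell^2 m^2/n)$, which the hierarchy $1/n\ll 1/m$ renders negligible.
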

\subsection{Tight Hamilton-connectedness}
Let~$G$ be an $(n,k,\gamma)$-graph and let~$P$ be a tight path in~$G$.
We say that~$P$ is a \textit{tight Hamilton path} of~$G$ if~$V(P)=V$.
We say that~$G$ is \textit{tight Hamilton-connected} if for any disjoint $\ordered{S}, \ordered{T}\in\ordsubs{V}{k-1}$, there is a tight Hamilton path of~$G$ which connects~$\ordered{S}$ and~$\ordered{T}$.
We will deduce from the results in~\cite{RRS08} that large $(n,k,\gamma)$-graphs are tight Hamilton-connected for~$k\geq 3$.
This will be important in the absorption step of our main argument, and also in the mixing part of our random walk analysis.
We begin by stating the main theorem of~\cite{RRS08}.
\begin{theorem}[{\cite[Theorem 1.1]{RRS08}}]\label{RRSmainthm}
Let $1/n\ll\gamma,1/k$, where $k\geq 3$, and let~$G$ be an $(n,k,\gamma)$-graph.
Then~$G$ contains a tight Hamilton cycle.
\end{theorem}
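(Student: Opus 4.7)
The plan is to use the absorption method, which has become the standard approach to results of this type. The proof decomposes into three main building blocks: construct an \emph{absorbing} tight path $A$ that can incorporate any small set of leftover vertices into itself, set aside a small \emph{reservoir} $R$ whose vertices can be used to glue tight paths together, and cover essentially all remaining vertices by a small number of long tight paths. After connecting the path fragments through $R$ and absorbing any remaining vertices using $A$, one obtains a tight Hamilton cycle of $G$.

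For the absorption step, the key combinatorial observation is that under a minimum codegree of $(1/2+\gamma)n$, for every vertex $v\in V$ one can find polynomially many short tight paths (``absorbers for $v$'') with a fixed pair of $(k-1)$-tuples as ends and with the property that $v$ can be either included or excluded while still producing a tight path with the same ends. Taking a small uniformly random family of absorbers and concatenating them via short connecting paths yields, with high probability, a single tight path $A$ of length $o(n)$ that contains at least $\Omega(n)$ absorbers for every vertex. Then any set $W$ of $o(n)$ leftover vertices can be swallowed one at a time, since each vertex still has many unused absorbers available. The reservoir $R$ is constructed similarly via a probabilistic argument: a random set $R$ of size $\Theta(n/\log n)$ has the property that between any two disjoint ordered $(k-1)$-tuples $\vec{S},\vec{T}\in\ordsubs{V}{k-1}$ there are many short tight paths connecting $\vec{S}$ to $\vec{T}$ whose internal vertices lie in $R$.

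For the covering step, one removes $A\cup R$ (losing only $\gamma/2$ in the codegree condition) and seeks to cover all but $o(n)$ vertices by a bounded number of long tight paths. The natural way to do this is via the hypergraph regularity lemma: apply a strong regularity lemma to obtain a reduced $k$-graph, find an almost-perfect matching (in the reduced graph) of ``tight-path-friendly'' structures (e.g.\ $K^{(k)}_{k+1}$'s, or regular complexes), and convert each such structure into a tight path covering nearly all vertices of the corresponding clusters by repeatedly extending using regularity. Alternatively, a direct ``extension and rotation'' argument based on codegrees could be attempted. One then splices all these paths together using connecting paths routed through $R$ to obtain a single tight path $P$ on $V(G)\setminus W$ for some $W$ with $|W|=o(n)$, whose ends coincide with the ends of $A$. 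Finally, use $A$ to absorb $W$, closing the cycle.

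The main obstacle, by a wide margin, is the covering step. Both finding the regular slices that tile almost all of $V$ and showing that one can convert these slices into long tight paths demand the full strength of hypergraph regularity and a tight-path building lemma adapted to arbitrary $k$. The absorption and reservoir constructions are relatively routine probabilistic arguments once the codegree condition is exploited to show each vertex has many absorbers/connectors, but controlling the leftover after the covering step, and ensuring that the leftover is genuinely small enough to be absorbed, is the technical heart of the proof. In the setting of this paper, however, Theorem~\ref{RRSmainthm} is invoked only as a black box, so we do not attempt to reprove it and merely use its conclusion.
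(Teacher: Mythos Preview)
The paper does not prove Theorem~\ref{RRSmainthm}; it is quoted verbatim from~\cite{RRS08} and used purely as a black box (in the proof of Lemma~\ref{hampath}). You correctly recognise this in your final sentence. Your sketch of the absorption-method proof is a fair high-level summary of the strategy in~\cite{RRS08}, so there is no discrepancy to flag.
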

The next lemma ensures the existence of an `absorbing path'~$A$, which can absorb small sets of vertices into its interior.
\begin{lemma}[{\cite[Lemma 2.1]{RRS08}}]\label{absorbingpath}
Let $1/n \ll \gamma, 1/k$, where $k\geq 3$, suppose that $\gamma\leq 1/(32k)$, set $\beta\coloneqq 2^{k-4}\gamma^{2k}n$, and let~$G$ be an $(n,k,\gamma)$-graph.
Then there exists a tight path~$A$ in~$G$ with $|V(A)|\leq 16k\gamma^{k-1}n$ such that for every subset $U\subseteq V\setminus V(A)$ of size~$|U|\leq\beta$, there is a tight path~$A_{U}$ in~$G$ with $V(A_{U}) = V(A)\cup U$ and such that~$A_{U}$ has the same ends as~$A$.
\end{lemma}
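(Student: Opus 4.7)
The plan is to apply the absorption method tailored to tight paths. For each $v\in V$, I define a \emph{$v$-absorber} to be an ordered tuple $\ordered{Z}=(z_1,\dots,z_{2k-1})$ of distinct vertices in $V\setminus\{v\}$ such that both $(z_1,\dots,z_{2k-1})$ and the spliced sequence $(z_1,\dots,z_{k-1},v,z_k,\dots,z_{2k-1})$ span tight paths in $G$. Because the two orderings share the same first and last $k-1$ vertices, whenever $\ordered{Z}$ appears as a consecutive sub-block of a larger tight path one may swap in the spliced version, inserting $v$ into the interior while preserving both the tight-path structure and the two ordered ends. A greedy step-by-step count using $\delta(G)\ge(1/2+\gamma)n$ shows that every vertex~$v$ is contained in at least $c_k\gamma^{k}n^{2k-1}$ distinct $v$-absorbers, since each of the $2k$ codegree constraints defining an absorber cuts the number of valid continuations down only by a factor bounded away from~$1$.

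Next, I build a random reservoir~$\cA$ by including each tight-path tuple of length $2k-1$ independently with probability $p$ chosen so that the expected number of $v$-absorbers in $\cA$ equals $4\beta$ for every $v$; concretely $p=\Theta(\gamma^{k}n^{-2k+2})$. By the Chernoff bound (Lemma~\ref{chernoff}) and a union bound over $v$, with positive probability the family~$\cA$ satisfies: (i) every vertex~$v$ is contained in the insertion set of at least $2\beta$ tuples in $\cA$; (ii) the number of pairs of tuples in $\cA$ sharing a vertex is $o(|\cA|)$; and (iii) $|\cA|$ is of order at most $\gamma^{k-1}n$. Deleting one tuple from each offending intersection then gives a pairwise vertex-disjoint subfamily~$\cA'$ whose members still admit at least $\beta$ insertion points for every vertex $v\in V$.

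To form $A$, I chain the tuples of~$\cA'$ into a single tight path by inserting short connectors between consecutive tuples, where each connector is a tight path of length $O_k(1)$ obtained from a connecting lemma for $\gamma$-Dirac $k$-graphs (a byproduct of the machinery underlying Theorem~\ref{RRSmainthm}), chosen greedily and avoiding all vertices already committed. The resulting $A$ is a tight path with $|V(A)|\le(2k-1+O_k(1))\,|\cA'|\le 16k\gamma^{k-1}n$, as required. To verify the absorbing property, given any $U\subseteq V\setminus V(A)$ with $|U|\le\beta$ I form the bipartite graph between $U$ and the tuples of~$\cA'$, with an edge $v$--$\ordered{Z}$ whenever $v$ lies in the insertion set of~$\ordered{Z}$. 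Every $v\in U$ has at least $\beta\ge|U|$ incident tuples, so Hall's condition holds and a perfect matching $U\to\cA'$ exists; performing the spliced swap at each matched absorber simultaneously produces a tight path $A_U$ with $V(A_U)=V(A)\cup U$ and the same ends as~$A$, since the absorbers of~$\cA'$ are vertex-disjoint and each swap is local to its own block.

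\textbf{The main obstacle} I expect is the chaining step: producing a constant-length tight path connecting arbitrary disjoint $(k-1)$-tuples while avoiding a slowly growing forbidden set. This ``short connecting lemma'' is the nontrivial ingredient and uses the full strength of the $\gamma$-Dirac assumption; everything else---the codegree count of $v$-absorbers, the Chernoff cleanup, and the Hall-matching absorption---is routine, with the parameters matching up comfortably because the count $\Omega(\gamma^{k}n^{2k-1})$ of absorbers per vertex leaves a factor of order $\gamma^{-k-1}n^{2k-2}$ of slack against the required $\beta=\Theta(\gamma^{2k}n)$.
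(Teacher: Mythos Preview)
The paper does not give its own proof of this lemma: it is quoted verbatim from R\"odl--Ruci\'nski--Szemer\'edi~\cite{RRS08} and used as a black box. Your sketch is essentially the original RRS08 argument (random selection of vertex-disjoint absorbers, chaining via the connecting lemma, and absorption via a greedy matching), so there is nothing to compare against here; the paper simply defers to the same source.

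A couple of small remarks on your write-up. First, the absorber count: with your definition, after fixing $(z_1,\dots,z_{k-1})$ with $\{z_1,\dots,z_{k-1},v\}\in E(G)$, each of $z_k,\dots,z_{2k-2}$ is subject to \emph{two} codegree constraints (one from the unspliced path and one from the spliced path), giving at least $2\gamma n$ choices each by inclusion--exclusion, while $z_{2k-1}$ has only one constraint. This yields $\Omega(\gamma^{k-1}n^{2k-1})$ absorbers per vertex rather than $\gamma^{k}n^{2k-1}$; the extra factor of $\gamma$ you wrote is harmless for the conclusion but worth getting right if you want the stated constants $\beta=2^{k-4}\gamma^{2k}n$ and $|V(A)|\le 16k\gamma^{k-1}n$ to line up. Second, you correctly identify the connecting lemma as the nontrivial external ingredient; note that the present paper also imports it separately as Lemma~\ref{connectinglemma}, so within this paper you may simply invoke that statement rather than re-deriving it.
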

The next lemma will enable us to find constant-length tight paths between any disjoint pair of ordered $(k-1)$-sets of vertices.
\begin{lemma}[{\cite[Lemma 2.4]{RRS08}}]\label{connectinglemma}
Let $1/n\ll \gamma, 1/k$, where $k\geq 3$, and let $G$ be an $(n,k,\gamma)$-graph. Then for every $\ordered{S},\ordered{T}\in\ordsubs{V}{k-1}$ with $\unord{S}\cap \unord{T} = \emptyset$, there is an $\ell$-path $P$ in $G$ with $\ell\leq 2k/\gamma^{2}$ that connects $\ordered{S}$ and $\ordered{T}$.
\end{lemma}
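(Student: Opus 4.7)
The plan is a ``meet-in-the-middle'' argument: I iteratively extend tight paths from both $\ordered{S}$ and from the reversed tuple $\reversed{T}$ for $\ell_0\coloneqq\lceil k/\gamma^2\rceil$ steps each, and then combine them via a short connecting walk. The basic extension primitive I use is that for any ordered $(k-1)$-tuple $\ordered{U}=(u_1,\ldots,u_{k-1})$ and any forbidden set $W\subseteq V$ of size $O(1/\gamma^2)$, the Dirac condition supplies at least $(1/2+\gamma)n-|W|\geq(1/2+\gamma/2)n$ vertices $v\in N(\unord{U})\setminus W$ with which to extend $\ordered{U}$ by one step to $(u_2,\ldots,u_{k-1},v)$.

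Iterating this from $\ordered{S}$ for $\ell_0$ steps (avoiding $\unord{T}$ and the previously used vertices) produces at least $((1/2+\gamma/2)n)^{\ell_0}$ distinct tight paths of length $\ell_0$. Letting $\cA\subseteq\ordsubs{V}{k-1}$ be the set of their endpoint $(k-1)$-tuples, and noting that at most $n^{\ell_0-(k-1)}$ tight paths of length $\ell_0$ can share a fixed endpoint tuple, I obtain $|\cA|\geq(1/2+\gamma/2)^{\ell_0}\,n^{k-1}\geq c\,n^{k-1}$ for a positive constant $c=c(\gamma,k)$. The symmetric construction starting from $\reversed{T}$ yields an analogous set $\cB\subseteq\ordsubs{V}{k-1}$ consisting of the \emph{initial} $(k-1)$-tuples of tight paths of length $\ell_0$ ending at $\ordered{T}$ in $G-\unord{S}$, with $|\cB|\geq c\,n^{k-1}$.

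The remaining task is to pair some $\ordered{U}\in\cA$ with some $\ordered{V}\in\cB$ via a tight walk of a few more steps. I would do this by a further application of the Dirac condition: from a suitably chosen $\ordered{U}\in\cA$ I constrain the next few extensions to steer the endpoint tuple into $\cB$ via iterated codegree intersections against subsets of $V$ read off from $\cB$. Each such intersection cuts the candidate set by at most a factor of $1/2-\gamma$, so the length budget $2k/\gamma^2$ leaves enough room for $O(1/\gamma^2)$ successive intersections while retaining a nonempty candidate set. Any $O(1/\gamma^2)$ vertex repetitions in the resulting walk can then be straightened out into a genuine tight path, which is cheap since the total length is negligible compared with $\gamma n$. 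The hardest step will be this merge: because $|\cA|$ and $|\cB|$ are only positive constant fractions of $n^{k-1}$, direct pigeonhole does not force $\cA\cap\cB$ to be nonempty, nor does simple neighborhood expansion in the natural digraph on $\ordsubs{V}{k-1}$ grow the reachable set, so the iterative codegree-intersection has to be organised carefully to ensure that the candidate set does not collapse before it hits $\cB$.
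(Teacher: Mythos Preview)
Note first that the paper does not prove this lemma: it is quoted as Lemma~2.4 of R\"odl--Ruci\'nski--Szemer\'edi~\cite{RRS08} and used as a black box, so there is no in-paper proof to compare against.

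Taken on its own merits, your proposal has a genuine gap at exactly the point you yourself flag as hardest. The meet-in-the-middle setup and the counting giving $|\cA|,|\cB|\geq c\,n^{k-1}$ are fine, but $c=(1/2+\gamma/2)^{\ell_0}$ with $\ell_0=\lceil k/\gamma^{2}\rceil$ is exponentially small in $1/\gamma^{2}$, so $\cA$ and $\cB$ are tiny subsets of $\ordsubs{V}{k-1}$ and pigeonhole gives nothing. Your proposed merge --- ``steer the endpoint tuple into $\cB$ via iterated codegree intersections against subsets of $V$ read off from $\cB$'' --- is not an argument: you never say which vertex sets are to be read off from a set of ordered $(k-1)$-tuples, and the claim that each intersection ``cuts the candidate set by at most a factor of $1/2-\gamma$'' is unsupported. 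Intersecting a neighbourhood of size $(1/2+\gamma)n$ with a target set of density far below $1/2$ can leave you with the empty set; for your scheme to work the relevant target sets would already need density above $1/2-\gamma$, which is precisely what has not been shown. In effect the outline reduces the connecting problem to itself. A secondary issue is that ``straightening out'' a tight walk with vertex repetitions into a tight path is not a free operation --- excising a loop breaks the $(k-1)$-overlap structure at the splice --- but this is moot given the main gap. The actual argument in~\cite{RRS08} gets around all of this by arranging, via a more structured use of the codegree condition, that the reachable sets from each side genuinely cross the $1/2$ threshold, after which they must overlap; raw branching as you set it up cannot achieve that.
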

We are now ready to prove that large $(n,k,\gamma)$-graphs are tight Hamilton-connected.
\begin{lemma}\label{hampath}
Let $1/n \ll \gamma,1/k$, where~$k\geq2$, and let $G$ be an $(n,k,\gamma)$-graph.
Then~$G$ is tight Hamilton-connected.
\end{lemma}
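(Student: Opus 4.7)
The case $k=2$ is the classical Hamilton-connectedness of Dirac graphs, immediate from Ore's theorem applied to $G$ together with a virtual edge between the two prescribed endpoints. Assume henceforth $k\ge 3$, and fix disjoint $\ordered{S},\ordered{T}\in\ordsubs{V}{k-1}$. The plan is to build a tight Hamilton path from $\ordered{S}$ to $\ordered{T}$ via the standard absorption-and-reservoir scheme underlying \cite{RRS08}, invoking only Lemmas~\ref{absorbingpath} and~\ref{connectinglemma}.

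Since $G-(\unord{S}\cup\unord{T})$ is still essentially $\gamma$-Dirac (only $O(1)$ vertices have been removed), Lemma~\ref{absorbingpath} produces an absorbing tight path $A$ with ends $\ordered{A_1},\ordered{A_2}$, of size at most $16k\gamma^{k-1}n$, absorbing any external subset of size at most $\beta:=2^{k-4}\gamma^{2k}n$. Next, randomly set aside a reservoir $R\subseteq V\setminus(V(A)\cup\unord{S}\cup\unord{T})$ of size $|R|=\Theta(\gamma^{2k}n)$; Proposition~\ref{tidytool} (or a direct Chernoff argument via Lemma~\ref{chernoff}) guarantees that $V\setminus R$ remains $\gamma/2$-Dirac and that every $(k-1)$-set has codegree $\Theta(\gamma^{2k}n)$ into $R$. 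Apply Lemma~\ref{connectinglemma} inside the subgraph on $R\cup\unord{S}\cup\unord{A_1}$ to obtain a short tight path $P_S$ from $\ordered{S}$ to $\ordered{A_1}$, using vertices only from $R\cup\unord{S}\cup\unord{A_1}$.

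Inside $G':=G-V(A)-V(P_S)$, I would now build a long tight path $P_T^{\ast}$ from $\ordered{A_2}$ to $\ordered{T}$ that covers all but at most $\beta/2$ vertices of $V(G')\setminus R$. To do so, take a near-perfect tight matching $M$ in $G'-(R\cup\unord{T})$ (which exists by a standard greedy argument in any sufficiently large Dirac $k$-graph), then stitch the edges of $M$ into a single tight path by $|M|$ iterated applications of Lemma~\ref{connectinglemma}, each spending $O(1)$ vertices of $R$ as internal bridges, and finally connect the resulting tight path to $\ordered{T}$. Since $|M|\cdot O(1)\ll|R|$, the reservoir is sufficient. Let $U'$ denote the vertices left uncovered by $P_S\cup A\cup P_T^{\ast}$ (the matching leftovers plus the unused reservoir vertices); by construction $|U'|\le\beta$. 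By the absorbing property of $A$ there is a tight path $A_{U'}$ with the same ends $\ordered{A_1},\ordered{A_2}$ as $A$ and $V(A_{U'})=V(A)\cup U'$, and the concatenation $P_S\cup A_{U'}\cup P_T^{\ast}$ is the desired tight Hamilton path from $\ordered{S}$ to $\ordered{T}$.

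The main obstacle is the matching-plus-stitching construction of $P_T^{\ast}$, which forces a quantitative balance: $R$ must be large enough to supply the $\Theta(n)$ bridge vertices needed to stitch up $M$, yet small enough that $|U'|\le\beta$, so that every leftover vertex fits inside the absorbing capacity of $A$. The choice $|R|=\Theta(\gamma^{2k}n)$ is calibrated precisely so that both requirements are met simultaneously; the remaining verifications (that each intermediate subgraph still satisfies the Dirac hypothesis of Lemmas~\ref{absorbingpath} and~\ref{connectinglemma}, and that each stitching step is applied to a hypergraph of the correct size) are routine bookkeeping.
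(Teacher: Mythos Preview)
Your overall scaffold (absorber + reservoir + connecting lemma) is the same as the paper's, but the construction of the long path $P_T^\ast$ contains a fatal quantitative error. A near-perfect matching $M$ in $G'-(R\cup\unord{T})$ has $|M|=\Theta(n/k)$ edges, and each application of Lemma~\ref{connectinglemma} consumes up to $2k/\gamma^2$ new vertices of $R$. Hence stitching all of $M$ into a single tight path requires on the order of $(n/k)\cdot(2k/\gamma^2)=2n/\gamma^2$ reservoir vertices, which already exceeds $n$, let alone $|R|=\Theta(\gamma^{2k}n)$. Your assertion ``$|M|\cdot O(1)\ll|R|$'' is therefore false: with $|M|=\Theta(n)$ and $|R|=\Theta(\gamma^{2k}n)$ you would need the $O(1)$ factor to be $o(\gamma^{2k})$, which is impossible. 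No recalibration of $|R|$ can rescue this, since the required budget $\Theta(n/\gamma^2)$ is larger than the whole vertex set; and enlarging $R$ beyond $\beta$ would in any case overflow the absorbing capacity of $A$.

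The paper sidesteps this by not stitching a linear number of pieces at all. Instead it applies Theorem~\ref{RRSmainthm} directly to $G'':=G-(\unord{S}\cup\unord{T}\cup V(A)\cup W)$ (where $W$ is the reservoir) to obtain a tight Hamilton \emph{cycle} of $G''$, then deletes $k-1$ consecutive edges to get a single tight Hamilton path $P$ of $G''$ with ends $\ordered{X},\ordered{Y}$. Now only a \emph{constant} number of connections are needed: extend each of $\ordered{S},\ordered{T},\ordered{X},\ordered{Y},\ordered{A_1},\ordered{A_2}$ by $(k-1)$-paths into $W$, and apply Lemma~\ref{connectinglemma} three times inside $G[W]$ to link $\ordered{S}$ to $\ordered{X}$, $\ordered{Y}$ to $\ordered{A_1}$, and $\ordered{A_2}$ to $\ordered{T}$. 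The unused part of $W$ (still of size at most $|W|\le\beta$) is then absorbed into $A$. The point is that invoking the full Hamilton cycle theorem produces one long piece, so the reservoir only has to fund $O(1)$ short bridges rather than $\Theta(n)$ of them.
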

\begin{proof}
Firstly, note that this result follows easily from Dirac's Theorem\COMMENT{Let $x,y \in V(G)$ and consider $G'\coloneqq G-\{x,y\}$. By Dirac's Theorem,~$G'$ has a Hamilton cycle~$C$. Now we use the degree condition in~$G$ to see that the set~$N(x)$ intersects the set~$N^{+}(y)$ on~$V(C)$, where~$N^{+}(y)$ is the set of successors of neighbours of~$y$ on~$C$.} for the case $k=2$. Now, suppose $k\geq 3$ and suppose without loss of generality that $\gamma>0$ is sufficiently small in comparison to~$k$.
Let~$\ordered{S}, \ordered{T}\in\ordsubs{V}{k-1}$ be disjoint, and write $\ordered{S}=(s_{1},\dots,s_{k-1})$.
Set $\gamma'\coloneqq 3\gamma/4$, so that $G'\coloneqq G-(\unord{S}\cup\unord{T})$ is $\gamma'$-Dirac, and set $n'\coloneqq n-2(k-1)$.
We apply Lemma~\ref{absorbingpath} to~$G'$ to obtain a tight path~$A$ in~$G'$ with $|V(A)|\leq 16k(\gamma')^{k-1}n'$, with the properties as stated in Lemma~\ref{absorbingpath}.
Choose a set~$W\subseteq V\setminus(\unord{S}\cup\unord{T}\cup V(A))$ of size $(\gamma')^{3k}n'$ uniformly at random among all sets of that size.
Then a simple application\COMMENT{For each~$\unord{M}\in\unordsubs{V}{k-1}$, put $X_{\unord{M}}\coloneqq |N_{G}(\unord{M})\cap W|$.
Then $X_{\unord{M}}\sim\text{hyp}(|W|,d_{G-(\unord{S}\cup\unord{T}\cup V(A))}(\unord{M}), n-2(k-1)-|V(A)|)$.
So $\expn{X_{\unord{M}}}\geq (1/2+3\gamma/5)|W|$.
Then $\prob{X_{\unord{M}}\leq(1/2+\gamma/2)|W|}\leq\exp(-\gamma^{2}|W|/200)=\exp(-\Theta(n))$.
Taking a union bound over all $(k-1)$-tuples of~$G$, we see that the probability of any such tuple having codegree in~$W$ less than $(1/2+\gamma/2)|W|$ is at most~$1/2$.}
of Lemma~\ref{chernoff} shows that with high probability, for all $\unord{M}\in\unordsubs{V}{k-1}$ we have $|N_{G}(\unord{M})\cap W|\geq (1/2+\gamma/2)|W|$, and thus in particular,~$G[W]$ is $\gamma/2$-Dirac.
We fix such a choice of~$W$.
Set $G''\coloneqq G-(\unord{S}\cup\unord{T}\cup V(A) \cup W)$, and notice that~$G''$ is $\gamma/2$-Dirac.

We apply Theorem~\ref{RRSmainthm} to~$G''$ to obtain a Hamilton cycle~$C$ of~$G''$.
Delete~$k-1$ consecutive edges of~$C$ to obtain a Hamilton path~$P$ of~$G''$ with ends~$\ordered{X}$ and~$\ordered{Y}$.
We use the property that all $(k-1)$-tuples in~$G$ have high codegree in~$W$ to find, for each $i\in [k-1]$ in turn, a vertex $v_{i}\in W\setminus\{v_{1},\dots,v_{i-1}\}$ such that $\{s_{i},\dots,s_{k-1},v_{1},\dots,v_{i}\}$ is an edge.
Let $\ordered{S'}\coloneqq(v_{k-1},\dots,v_{1})$, so that we have found a $(k-1)$-path~$P_{S}$ with ends~$\ordered{S}$ and~$\ordered{S'}$.
Let~$\ordscript{A}{1}$ and~$\ordscript{A}{2}$ be the ends of~$A$.
We similarly find mutually disjoint $\ordered{X'}, \ordered{Y'}, \ordered{A'_{1}}, \ordered{A'_{2}}, \ordered{T'}\in\ordsubs{W}{k-1}$ and $(k-1)$-paths $P_{X}, P_{Y}, P_{A_{1}}, P_{A_{2}}, P_{T}$ with the corresponding pairs of ends.
Since~$G[W]$ is $\gamma/2$-Dirac, we can apply Lemma~\ref{connectinglemma} to obtain a path~$P_{SX}$ of length at most $8k/\gamma^{2}$ in~$G[W]$ which connects~$\reversed{S'}$ and~$\reversed{X'}$.
Since~$P_{SX}$ contains so few vertices, we can repeat the process to find disjoint paths~$P_{YA_{1}}$ and~$P_{A_{2}T}$ in~$G[W]$ with ends~$\reversed{Y'}$ and~$\reversed{A'_{1}}$, and~$\reversed{A'_{2}}$ and~$\reversed{T'}$, respectively.
Let $W'\coloneqq W\setminus (V(P_{SX})\cup V(P_{YA_{1}})\cup V(P_{A_{2}T}))$.
We apply the absorbing property of~$A$ to obtain a tight path~$A_{W'}$ in~$G$ with~$V(A_{W'})=V(A)\cup W'$, such that~$A_{W'}$ has ends~$\ordered{A_{1}}$ and~$\ordered{A_{2}}$.

Then $P_{S}\cup P_{SX}\cup P_{X} \cup P\cup P_{Y}\cup P_{YA_{1}}\cup P_{A_{1}} \cup A_{W'}\cup P_{A_{2}} \cup P_{A_{2}T}\cup P_{T}$ is a tight Hamilton path in~$G$ which connects~$\ordered{S}$ and~$\ordered{T}$.
\end{proof}
\section{Normal perfect fractional matchings}\label{fractional}
Let $k\geq2$ and let~$G$ be a $k$-graph on~$n$ vertices. We say that an edge weighting $\weighting{x}\colon E(G)\rightarrow\bR^{+}$ is $C$-\textit{normal} if \begin{equation}\label{eq:doublebound}
\frac{1}{Cn^{k-1}}\leq\weightel{x}{e}\leq\frac{C}{n^{k-1}}, \hspace{7mm}\text{for each}\hspace{1.5mm}e\in E(G).
\end{equation}
In this section we adapt some ideas of~\cite{CKPY} to show that an $(n,k,\gamma)$-graph~$G$ admits a normal perfect fractional matching (see Lemma~\ref{normality}).
This will be an essential tool in our random walk analysis for showing that the random walk is roughly equally likely to visit any vertex.
The idea is to construct a perfect fractional matching of~$G$ in which the weight of any edge~$e$ is set to be the probability that~$e$ is included in a uniformly random perfect matching of~$G$.
(A sufficiently large $(n,k,\gamma)$-graph with~$k\mid n$ has at least one perfect matching~\cite{KO06,RRS09}).
A crucial feature of this approach is that any edge~$e$ is roughly equally likely to be included in a uniformly random perfect matching of~$G$.
We show this using the so-called `switching method' in a similar way as in~\cite{CKPY}.
Let~$k\geq2$, let~$G$ be a $k$-graph, let~$e\in E(G)$, and let~$M_{\ell}$ be a perfect matching of~$G$ containing precisely~$\ell$ edges intersecting~$e$.
Supposing $0\leq\ell\leq k-1$, we define an $(e,M_{\ell})$\textit{-upswitching} to be a matching~$Y$ of~$G$ satisfying
\begin{enumerate}[label=\upshape(\roman*)]
\item $e\subseteq V(Y)$;
\item $Y$ contains precisely $\ell+1$ edges intersecting~$e$;
\item for all~$e'\in M_{\ell}$, we have either~$e'\subseteq V(Y)$ or $e'\cap V(Y)=\emptyset$.
\end{enumerate}
Supposing instead that $\ell\in [k]$, we define an $(e,M_{\ell})$\textit{-downswitching} to be a matching~$Y$ of~$G$ satisfying
\begin{enumerate}[label=\upshape(\roman*)]
\item $e\subseteq V(Y)$;
\item $Y$ contains precisely $\ell-1$ edges intersecting~$e$;
\item for all~$e'\in M_{\ell}$, we have either~$e'\subseteq V(Y)$ or $e'\cap V(Y)=\emptyset$.
\end{enumerate}
Note that if~$Y$ is an $(e, M_{\ell})$-upswitching, then we can obtain a new perfect matching~$M'$ from~$M_{\ell}$ by replacing~$M_{\ell}[V(Y)]$ with~$Y$.
Then~$M'$ contains exactly~$\ell+1$ edges intersecting~$e$.
Similarly, if~$Y$ is an $(e, M_{\ell})$-downswitching, then~$M'$ has exactly~$\ell-1$ edges intersecting~$e$.
\begin{lemma}\label{perfect}
Let $1/n \ll 1/C \ll \gamma, 1/k$, where~$k \geq 2$ and~$k\mid n$.
Let~$G$ be an $(n,k,\gamma)$-graph, and let~$M$ be a uniformly random perfect matching of~$G$.
Then for each~$e\in E(G)$, we have
\begin{equation}\label{eq:theclaim}
\frac{1}{Cn^{k-1}}\leq\prob{e\in M}\leq\frac{C}{n^{k-1}}.
\end{equation}
\end{lemma}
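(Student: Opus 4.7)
The plan is to apply the switching method, as in~\cite{CKPY}. Fix $e\in E(G)$, and for each $\ell\in[k]$ let $\cM_\ell\coloneqq\cM_\ell(e)$ denote the set of perfect matchings of~$G$ containing exactly~$\ell$ edges that intersect~$e$. These sets partition the set of all perfect matchings of~$G$, with $\cM_1$ consisting precisely of those containing~$e$. The bound (\ref{eq:theclaim}) is equivalent to
\[\frac{|\cM_1|}{\sum_{\ell=1}^{k}|\cM_\ell|}=\Theta\bigl(n^{-(k-1)}\bigr),\]
so it suffices to prove $|\cM_{\ell+1}|/|\cM_\ell|=\Theta(n)$ for each $\ell\in[k-1]$: telescoping then gives $|\cM_\ell|=\Theta(n^{\ell-1})|\cM_1|$, and summing delivers the desired asymptotic. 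The non-emptiness of $\cM_1$, needed to avoid triviality, follows by applying the perfect-matching threshold result of~\cite{RRS09} to the essentially-Dirac $k$-graph $G-V(e)$.

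For each $\ell\in[k-1]$, the ratio $|\cM_{\ell+1}|/|\cM_\ell|$ will be controlled by a double-count of ordered pairs $(M,Y)$ in which $M\in\cM_\ell$ and $Y$ is a \emph{simple} $(e,M)$-upswitching, defined as follows: letting $e_1,\dots,e_\ell$ be the edges of $M$ meeting~$e$, one chooses an \emph{auxiliary} edge $f\in M\setminus\{e_1,\dots,e_\ell\}$, sets $V(Y)\coloneqq V(e_1)\cup\cdots\cup V(e_\ell)\cup V(f)$, and takes $Y$ to be any partition of $V(Y)$ into $\ell+1$ edges of $G$, each meeting~$e$. Writing $u(M)$ and $d(M')$ for the numbers of such simple up- and downswitchings from $M\in\cM_\ell$ and $M'\in\cM_{\ell+1}$ respectively, we have
\[\sum_{M\in\cM_\ell}u(M)=\sum_{M'\in\cM_{\ell+1}}d(M').\]
I aim to show that $u(M)=\Theta(n)$ for every $M\in\cM_\ell$ and $d(M')=\Theta(1)$ for every $M'\in\cM_{\ell+1}$, whence both directions of the claimed ratio follow.

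The main technical obstacle is the lower bound $u(M)\geq cn$. The matching upper bound $u(M)=O(n)$ is immediate, since there are at most $n/k$ choices of $f$ and, for each, a bounded number of partitions of the $(\ell+1)k$-vertex set $V(Y)$ into $k$-sets. For the lower bound one must exhibit, for a linear-in-$n$ number of auxiliary $f$, at least one valid decomposition $V(Y)=\bigsqcup_{i=1}^{\ell+1}(S_i\cup T_i)$ with $S_i\subseteq e$ non-empty, $T_i\subseteq V(Y)\setminus e$, and each $S_i\cup T_i\in E(G)$. Since $|V(Y)|=(\ell+1)k$ is a bounded constant and $\delta(G)\geq(1/2+\gamma)n$, any $(k-1)$-subset of $V$ has codegree linear in $n$; for each fixed composition $a_1+\cdots+a_{\ell+1}=k$ into positive parts and each compatible partition of $e$ into parts of those sizes, an averaging (or random-greedy) argument over the choice of $f$ should show that a positive fraction of candidate auxiliary edges admit the required partition of $V(Y)\setminus e$ into parts $T_i$ with $|T_i|=k-a_i$ and $S_i\cup T_i\in E(G)$. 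The bound $d(M')=\Theta(1)$ is handled by an analogous but easier argument, as $V(Y')$ is already determined by $M'$ (so no auxiliary edge needs to be chosen); the combinatorial upper bound is clear, and the lower bound uses the same codegree-based construction to produce at least one valid reshuffle. Combining these estimates yields (\ref{eq:theclaim}).
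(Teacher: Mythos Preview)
Your switching framework and the reduction to bounding the ratios $|\cM_{\ell+1}|/|\cM_\ell|$ are exactly right, and match the paper's approach. The decisive difference is the \emph{size} of the switchings: you use minimal ones (adding a single auxiliary edge $f$), whereas the paper uses switchings of a large constant size~$m$. Your minimal switchings are not enough, and the proposal has a genuine gap.

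The failure is in the claim $d(M')\geq 1$ for every $M'\in\cM_{\ell+1}$. Take $\ell=1$: for $M'\in\cM_2$ with edges $e_1',e_2'$ meeting $e$, you must re-partition $V(Y)=e_1'\cup e_2'$ into two edges of $G$, exactly one of which meets $e$. Since $|V(Y)\setminus e|=k$, the edge disjoint from $e$ is forced to be the specific $k$-set $(e_1'\cup e_2')\setminus e$, and the other edge is forced to be~$e$ itself. So you need $(e_1'\cup e_2')\setminus e\in E(G)$, which the Dirac condition certainly does not guarantee (already for $k=2$: in a Dirac graph, if $\{u,x\},\{v,y\}\in M'$ and $\{u,v\}\in E(G)$, there is no reason $\{x,y\}\in E(G)$). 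Hence $\min_{M'} d(M')=0$ in general, and your bound $|\cM_{\ell+1}|/|\cM_\ell|\leq \max u(M)/\min d(M')$ is vacuous. This kills the lower bound in~(\ref{eq:theclaim}). Your sketch for $u(M)\geq cn$ is also only heuristic: for $k\geq 3$ the ``averaging over~$f$'' step asks whether a fixed vertex lies in the neighbourhood of a $(k-1)$-set determined by $M$ and~$f$, and the Dirac condition does not obviously make this happen for a positive fraction of~$f$.

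The paper's fix is precisely to enlarge the switchings. With $|Y|=m$ for a large constant $m$, the auxiliary part consists of $m-\ell$ (or $m-\ell-1$) edges of $M$ chosen essentially at random, and Proposition~\ref{tidytool} shows that for most such choices the induced subgraph $G[V(Y)]$ is itself $\gamma/2$-Dirac. One can then find the required re-matching inside $G[V(Y)]$ by invoking the perfect-matching threshold for Dirac hypergraphs. This converts the global codegree hypothesis into local structure on $V(Y)$---exactly the step your minimal switchings cannot supply.
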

\begin{proof}
Choose new integers~$m$ and~$B$ satisfying $1/n\ll1/C\ll1/B\ll1/m\ll\gamma,1/k$, and fix~$e\in E(G)$.
For each integer $\ell\in [k]$, let~$\cM_{\ell}$ be the set of perfect matchings of~$G$ containing precisely~$\ell$ edges intersecting~$e$.
Note that $\prob{e\in M}=|\cM_{1}|/(|\cM_{1}|+\dots+|\cM_{k}|)$ and recall that there is at least one perfect matching of~$G$ since~$G$ is $\gamma$-Dirac (so the denominator here is nonzero).
We first bound $|\cM_{\ell}|/|\cM_{\ell+1}|$ from above and below for each $\ell\in [k-1]$, and~(\ref{eq:theclaim}) will follow quickly.
Let~$\ell\in [k-1]$.
We define an auxiliary bipartite multigraph~$G_{e,\ell}^{\uparrow}$ with vertex bipartition~$(\cM_{\ell}, \cM_{\ell+1})$.
For each~$M_{\ell}\in\cM_{\ell}$ and each $(e, M_{\ell})$-upswitching~$Y$ of \textit{size}~$m$ (containing precisely~$m$ edges), we add an edge in~$G_{e,\ell}^{\uparrow}$ from~$M_{\ell}$ to the matching $M_{\ell+1}\in\cM_{\ell+1}$ obtained by replacing~$M_{\ell}[V(Y)]$ with~$Y$.
Write~$\delta_{\cM_{\ell}}^{e,\uparrow}$ to denote the minimum degree in~$G_{e,\ell}^{\uparrow}$ over all~$M_{\ell}\in\cM_{\ell}$,
and write~$\Delta_{\cM_{\ell+1}}^{e,\uparrow}$ to denote the maximum degree in~$G_{e,\ell}^{\uparrow}$ over all~$M_{\ell+1}\in\cM_{\ell+1}$.
By double-counting~$|E(G_{e,\ell}^{\uparrow})|$, we obtain $|\cM_{\ell}|/|\cM_{\ell+1}|\leq\Delta_{\cM_{\ell+1}}^{e,\uparrow}/\delta_{\cM_{\ell}}^{e,\uparrow}$.
To bound~$\Delta_{\cM_{\ell+1}}^{e,\uparrow}$, we fix~$M_{\ell+1}\in\cM_{\ell+1}$ and bound the number of pairs~$(M_{\ell}, Y)$, where~$M_{\ell}\in\cM_{\ell}$ and~$Y$ is an $(e, M_{\ell})$-upswitching of size~$m$ that produces~$M_{\ell+1}$.
Note that any such~$Y$ must contain all vertices in the~$\ell+1$ edges of~$M_{\ell+1}$ intersecting~$e$, and there are at most~$n^{m-\ell-1}$ choices for the other~$m-\ell-1$ edges of~$M_{\ell+1}$ whose vertices to include in~$V(Y)$.
Once~$V(Y)$ is fixed, there are at most~$(mk)!$ choices for~$M_{\ell}[V(Y)]$ (and hence for~$M_{\ell}$).
Thus, we have~$\Delta_{\cM_{\ell+1}}^{e,\uparrow}\leq (mk)!n^{m-\ell-1}$.

To bound~$\delta_{\cM_{\ell}}^{e,\uparrow}$, we fix $M_{\ell}\in\cM_{\ell}$ and bound the number of $(e, M_{\ell})$-upswitchings of size~$m$ from below.
Let $U(M_{\ell})\coloneqq\{e'\in M_{\ell}\colon e\cap e'\neq\emptyset\}$.
Note that any $(e, M_{\ell})$-upswitching~$Y$ of size~$m$ must include all the vertices in~$U(M_{\ell})$, and there are~$\binom{n/k-\ell}{m-\ell}$ choices for the remaining~$m-\ell$ edges of~$M_{\ell}$ whose vertices to include in~$V(Y)$.
We apply Proposition~\ref{tidytool} (with $\cP=M_{\ell}$, $\cP_{0}=U(M_{\ell})$, and with~$m-\ell$,~$k$,~$\ell$ playing the roles of~$m$,~$\ell$,~$t$, respectively) to deduce that there are at least $(1-e^{-\sqrt{m-\ell}})\binom{n/k-\ell}{m-\ell}\geq (mk)^{-m}n^{m-\ell}$ choices of $X\subseteq M_{\ell}\setminus U(M_{\ell})$ of size~$m-\ell$ such that~$G[V(X\cup U(M_{\ell}))]$ is $\gamma/2$-Dirac.
Note that for each such~$X$, we may first choose a matching~$U'$ of size~$\ell+1$ in~$G[V(X\cup U(M_{\ell}))]$ such that $e\subseteq V(U')$ and~$e$ intersects every edge in~$U'$, and then choose a perfect matching~$Y'$ of~$G[V(X\cup U(M_{\ell}))\setminus V(U')]$. Then $Y\coloneqq Y'\cup U'$ is an $(e, M_{\ell})$-upswitching of size~$m$, unique to this choice\COMMENT{Other choices of~$X$ have different vertex sets.}
of~$X$.
We deduce that $\delta_{\cM_{\ell}}^{e,\uparrow}\geq (mk)^{-m}n^{m-\ell}$, and conclude that $|\cM_{\ell}|/|\cM_{\ell+1}|\leq (mk)!(mk)^{m}/n\leq B/n$.

We now bound the terms~$|\cM_{\ell}|/|\cM_{\ell+1}|$ from below analogously.
Let $\ell\in [k-1]$.
We define an auxiliary bipartite multigraph~$G_{e,\ell+1}^{\downarrow}$ with vertex bipartition~$(\cM_{\ell}, \cM_{\ell+1})$.
For each~$M_{\ell+1}\in\cM_{\ell+1}$ and each $(e, M_{\ell+1})$-downswitching~$Y$ of size~$m$, we add an edge in~$G_{e,\ell+1}^{\downarrow}$ from~$M_{\ell+1}$ to the matching~$M_{\ell}\in\cM_{\ell}$ obtained by replacing~$M_{\ell+1}[V(Y)]$ with~$Y$.
Let~$\delta_{\cM_{\ell+1}}^{e,\downarrow}$ denote the minimum degree in~$G_{e,\ell+1}^{\downarrow}$ among all~$M_{\ell+1}\in\cM_{\ell+1}$, and let~$\Delta_{\cM_{\ell}}^{e,\downarrow}$ denote the maximum degree in~$G_{e,\ell+1}^{\downarrow}$ among all~$M_{\ell}\in\cM_{\ell}$.
It is easy to see\COMMENT{since there are at most~$n^{m-\ell}$ choices of~$V(Y)$ containing the vertices of each of the~$\ell$ edges of any~$M_{\ell}\in\cM_{\ell}$ intersecting~$e$, and at most~$(mk)!$ ways to choose~$Y$ once~$V(Y)$ is fixed.}
that $\Delta_{\cM_{\ell}}^{e,\downarrow}\leq (mk)!n^{m-\ell}$.
Now fix~$M_{\ell+1}\in\cM_{\ell+1}$ and let $U(M_{\ell+1})\coloneqq\{e'\in M_{\ell+1}\colon e\cap e' \neq \emptyset\}$.
We apply Lemma~\ref{tidytool} again (with $\cP=M_{\ell+1}$, $\cP_{0}=U(M_{\ell+1})$, and with~$m-\ell-1$,~$k$,~$\ell+1$ playing the roles of~$m$,~$\ell$,~$t$, respectively) to deduce that $\delta_{\cM_{\ell+1}}^{e,\downarrow}\geq (mk)^{-m}n^{m-\ell-1}$, and thus $|\cM_{\ell}|/|\cM_{\ell+1}|\geq1/((mk)!(mk)^{m}n)\geq1/(Bn)$.

Finally, note that
\[
\prob{e\in M} = \frac{|\cM_{1}|}{|\cM_{1}|+\dots+|\cM_{k}|} \leq \frac{|\cM_{1}|}{|\cM_{k}|}= \frac{|\cM_{1}|}{|\cM_{2}|}\cdot\frac{|\cM_{2}|}{|\cM_{3}|}\dots\frac{|\cM_{k-1}|}{|\cM_{k}|} \leq\frac{B^{k-1}}{n^{k-1}}\leq \frac{C}{n^{k-1}},
\]
and similarly $\prob{e\in M}\geq |\cM_{1}|/(k|\cM_{k}|)\geq1/(kB^{k-1}n^{k-1})\geq 1/(Cn^{k-1})$.
\end{proof}
Finally, we use Lemma~\ref{perfect} to show that an $(n,k,\gamma)$-graph admits a normal perfect fractional matching.
\begin{lemma}\label{normality}
Let $1/n \ll 1/C \ll \gamma, 1/k$, where~$k \geq 2$, and let~$G$ be an $(n,k,\gamma)$-graph.
Then there exists a $C$-normal perfect fractional matching of~$G$.
\end{lemma}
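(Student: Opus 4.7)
The plan is to build $\weighting{x}$ by averaging over uniformly random perfect matchings, using Lemma~\ref{perfect} to control edge probabilities. In the easy case $k \mid n$, I would let $M$ be a uniformly random perfect matching of $G$ and set $\weightel{x}{e} := \prob{e \in M}$. Then $\sum_{e \ni v} \weightel{x}{e} = \prob{v \in V(M)} = 1$ for every $v \in V$, so $\weighting{x}$ is a perfect fractional matching, and $C$-normality is exactly the conclusion of Lemma~\ref{perfect}.

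For general $n$, let $r \in \{0, 1, \ldots, k-1\}$ be the unique integer with $k \mid (n-r)$. I would pick a uniformly random $r$-subset $R \subseteq V$ and, conditional on $R$, a uniformly random perfect matching $M_R$ of $G - R$, then set $\weightel{x}{e} := \frac{n}{n-r}\prob{e \in M_R}$. Since $r < k$ is bounded, deleting $R$ barely perturbs codegrees and $G - R$ is trivially $(n-r, k, \gamma/2)$-Dirac; fixing a constant $C_0$ with $1/n \ll 1/C \ll 1/C_0 \ll \gamma, 1/k$, Lemma~\ref{perfect} applies to $G - R$ with constant $C_0$.

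To verify that $\weighting{x}$ is a perfect fractional matching, I note that by symmetry $\prob{v \notin R} = (n-r)/n$ for every $v \in V$, and conditional on $v \notin R$ the vertex $v$ lies in exactly one edge of the perfect matching $M_R$, giving
\[
\sum_{e \ni v}\weightel{x}{e} \;=\; \frac{n}{n-r}\cdot\prob{v \in V(M_R)} \;=\; \frac{n}{n-r}\cdot\frac{n-r}{n} \;=\; 1.
\]
For $C$-normality, I would decompose $\prob{e \in M_R} = \prob{e \cap R = \emptyset}\cdot\expn{\prob{e \in M_R \mid R}\mid e \cap R = \emptyset}$: the first factor equals $\binom{n-k}{r}/\binom{n}{r}$, which is $\Theta(1)$ with constants depending only on $k$ (in fact $1 - O_k(1/n)$), while Lemma~\ref{perfect} applied to $G-R$ bounds the second factor inside $[1/(C_0(n-r)^{k-1}), C_0/(n-r)^{k-1}]$. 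Multiplying by $n/(n-r) = \Theta(1)$ and choosing $C$ suitably larger than $C_0$ in terms of $k$ yields $\weightel{x}{e} \in [1/(Cn^{k-1}), C/n^{k-1}]$. There is no real obstacle beyond bookkeeping of the constants in the hierarchy.
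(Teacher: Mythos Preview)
Your proof is correct and essentially identical to the paper's: the paper defines $\weightel{x}{e}:=\binom{n-1}{r}^{-1}\sum_{R\in\unordsubs{V}{r}}\prob{e\in M_R}$, which is the same weighting as your $\frac{n}{n-r}\prob{e\in M_R}$ (since $\frac{n}{n-r}\binom{n}{r}^{-1}=\binom{n-1}{r}^{-1}$), just phrased as an explicit average rather than a two-stage random experiment. Your verification of perfectness via $\prob{v\notin R}=(n-r)/n$ and of normality via Lemma~\ref{perfect} applied to $G-R$ matches the paper's reasoning.
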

\begin{proof}
Let~$i$ be the unique integer in $\{0,1,\dots,k-1\}$ satisfying $n \equiv i\bmod k$.
For each $\unord{S}\in\unordsubs{V}{i}$, let $G_{\unord{S}}\coloneqq G-\unord{S}$.
We define an edge weighting $\weighting{x}_{\unord{S}}\colon E(G_{\unord{S}})\rightarrow\bR^{+}$ by setting $\weighting{x}_{\unord{S}}(e)\coloneqq\prob{e\in M_{\unord{S}}}$ for each~$e\in E(G_{\unord{S}})$, where~$M_{\unord{S}}$ is a uniformly random perfect matching in~$G_{\unord{S}}$.
We define an edge weighting $\weighting{x}\colon E(G)\rightarrow\bR^{+}$ by setting
\[
\weightel{x}{e}\coloneqq\binom{n-1}{i}^{-1}\sum_{\unord{S}\in\unordsubs{V}{i}}\weighting{x}_{\unord{S}}(e),
\]
for each~$e\in E(G)$, where we set~$\weighting{x}_{\unord{S}}(e)$ to be~$0$ for each~$\unord{S}$ such that~$e\notin E(G_{\unord{S}})$.
Then, by Lemma~\ref{perfect},~$\weighting{x}$ is the desired $C$-normal perfect fractional matching of~$G$.
\COMMENT{It is clear that~$G_{\unord{S}}$ is~$\gamma/2$-Dirac for each $\unord{S}\in\unordsubs{V}{i}$.
Fix $\unord{S}\in\unordsubs{V}{i}$.
Applying Lemma \ref{perfect} to~$G_{\unord{S}}$ (and letting~$\gamma/2$ and~$C/2$ play the roles of~$\gamma$ and~$C$ respectively) we obtain $2/(C(n-i)^{k-1})\leq\weighting{x}_{\unord{S}}(e)\leq C/(2(n-i)^{k-1})$ for each~$e\in E(G_{\unord{S}})$.
Further, for any~$v\in V(G_{\unord{S}})$ we obtain that the sum of the weights on all edges of~$G_{\unord{S}}$ containing~$v$ is~$1$,
since any perfect matching of~$G_{\unord{S}}$ contains precisely one edge which contains~$v$.
Thus,~$\weighting{x}_{\unord{S}}$ is a perfect fractional matching of~$G_{\unord{S}}$.
It suffices to show that~$\weighting{x}$ is a perfect fractional matching of~$G$, and that (\ref{eq:doublebound}) holds.
We first argue that~$\weighting{x}$ is a perfect fractional matching of~$G$.
Fix~$v\in V$.
Note that there are precisely~$\binom{n-1}{i}$ choices of $\unord{S}\in\unordsubs{V}{i}$ such that~$v\in V(G_{\unord{S}})$, and for each such~$\unord{S}$ we have that~$\weighting{x}_{\unord{S}}$ is a perfect fractional matching of~$G_{\unord{S}}$.
Thus
we see from the definition of~$\weighting{x}$ that~$\weighting{x}$ is a perfect fractional matching of~$G$.
Finally we show that~$\weighting{x}$ satisfies (\ref{eq:doublebound}).
Fix~$e \in E(G)$.
Note that
\[
\weightel{x}{e} \leq \binom{n-1}{i}^{-1}\sum_{\substack{\unord{S}\in\unordsubs{V}{i}\\e\in E(G_{\unord{S}})}}\frac{C}{2(n-i)^{k-1}}\leq \frac{C}{2(n-i)^{k-1}} \leq \frac{C}{n^{k-1}}.
\]
The lower bound proceeds similarly
.}
\end{proof}
\section{Counting short paths}\label{countingshort}
The aim of this section is to prove the following lemma, which guarantees many short tight paths in a $\gamma$-Dirac $k$-graph~$G$, such that the $\gamma$-Dirac property of the graph~$G'$ obtained from deleting any such path is still essentially intact.
\begin{lemma}[Iteration Lemma]\label{iteration}
Let $1/n \ll c \ll \gamma,1/k$ where $k \geq 2$, let~$G$ be an $(n,k,\gamma)$-graph, and let $\ordered{S}\in\ordsubs{V}{k-1}$.
There exists a set~$\cP$ of $\sqrt{n}$-paths in~$G$ such that:
\begin{enumerate}[label=\upshape(\roman*)]
    \item $|\cP|\geq (cn)^{\sqrt{n}}$;
    \item $\ordered{S}$ is an end of each $P\in\cP$;
    \item if $P\in\cP$ and $\ordered{T}$ is the non-$\ordered{S}$ end of $P$, then $G'\coloneqq G-(V(P)\setminus \unord{T})$ satisfies $\delta(G')\geq(1/2 +\gamma-n^{-2/3})(n-\sqrt{n})$.
\end{enumerate}
\end{lemma}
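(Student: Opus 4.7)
The plan is to define a self-avoiding random walk $\cX$ on $V(G)$ starting from $\ordered{S}$ whose $\sqrt{n}$-step output is a tight path $P$ with $\ordered{S}$ as one end, to show that any specific $\sqrt{n}$-path is produced with probability at most $(C'/n)^{\sqrt{n}}$ for some $C'=C'(\gamma,k)$, and to show that the output is \emph{good} (i.e.\ satisfies~(iii)) with probability at least $1/2$; together these yield $|\cP|\geq(n/(2C'))^{\sqrt{n}}\geq(cn)^{\sqrt{n}}$. Concretely, apply Lemma~\ref{normality} to obtain a $C$-normal perfect fractional matching $\weighting{x}$ of $G$, and define $\cX$ on ordered $(k-1)$-tuples: if the current state is $\ordered{Y}=(y_1,\dots,y_{k-1})$ and $U$ is the set of vertices already visited, transition to $(y_2,\dots,y_{k-1},v)$ with probability proportional to $\weightel{x}{\unord{Y}\cup\{v\}}$, normalised over $v\in N_G(\unord{Y})\setminus U$. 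Condition~(ii) is then automatic. Normality gives the upper bound on path probabilities at once: each weight is $\Theta(n^{-k+1})$, the normaliser at each step sums $\Omega(n)$ such weights, and so every transition probability is $O(1/n)$.

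The core task is to show that $\cX$ is good with probability at least $1/2$. A direct computation using $d_G(\unord{M})\geq(1/2+\gamma)n$ shows that~(iii) is implied by
\[
|N_G(\unord{M})\cap V(P)|\leq d_G(\unord{M})\sqrt{n}/n+n^{1/3}\quad\text{for every }\unord{M}\in\unordsubs{V}{k-1}.
\]
Were $V(P)$ a uniformly random $\sqrt{n}$-subset of $V$, this would be an immediate consequence of Lemma~\ref{chernoff} (the slack $n^{1/3}$ comfortably exceeds the Chernoff standard deviation $O(n^{1/4})$), combined with a union bound over the $O(n^{k-1})$ tuples $\unord{M}$. The heart of the proof is to show that $V(P)$ behaves sufficiently like such a uniform sample.

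To this end, introduce an auxiliary Markov chain $\cX'$ with the same transition rule as $\cX$ but without the $U$-avoidance constraint. First, verify using that $\weighting{x}$ is a perfect fractional matching that the stationary distribution of $\cX'$ charges each vertex essentially uniformly, so one step of the stationary chain visits any fixed vertex with probability $(1+o(1))/n$. Second, prove that $\cX'$ mixes in $\mathrm{polylog}(n)$ steps: normality of $\weighting{x}$ together with the abundance of short tight walks between any two ordered $(k-1)$-tuples (coming from Lemma~\ref{hampath}) provides sufficient expansion. Third, couple $\cX$ and $\cX'$: since $\cX'$ revisits a previously seen vertex with probability $O(1/\sqrt{n})$ per step, \eqref{eq:variation2} gives that the total-variation distance between the output of $\cX$ and that of $\cX'$ is $o(1)$. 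Finally, apply Lemma~\ref{azuma} to $\cX'$ with $g=\mathds{1}_{N_G(\unord{M})}$ and blocks of length equal to the mixing time to obtain concentration of $|N_G(\unord{M})\cap V(P)|$ around its mean with error $O(\sqrt{n\,\mathrm{polylog}\,n})\ll n^{1/3}$; a union bound over $\unord{M}$ completes the argument.

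The main obstacle is the quantitative mixing-time and coupling estimate for $\cX'$, which must hold \emph{uniformly} over the adversarial history of the self-avoiding walk (the visited set $U$). Extracting polylogarithmic mixing from the $\gamma$-Dirac condition via normality of $\weighting{x}$ and Lemma~\ref{hampath}, and then tightening the coupling error to comfortably fit inside the $n^{1/3}$ slack, is where the bulk of the technical work lies; once these are in place, the remaining ingredients—the upper bound on path probability, the stationary distribution computation, and the concluding Chernoff/union bound—are routine.
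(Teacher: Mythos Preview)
Your overall strategy matches the paper's: take a $C$-normal perfect fractional matching via Lemma~\ref{normality}, run the self-avoiding $\weighting{x}$-walk from $\ordered{S}$, bound each transition probability by $O(1/n)$ from normality, and argue the walk is good with probability at least $1/2$, so that dividing $1/2$ by the per-path probability bound yields the count. The ingredients you list---stationary distribution close to uniform from the perfect fractional matching, polylog mixing from the abundance of short walks, coupling with the non-avoiding chain, and Lemma~\ref{azuma}---are exactly those the paper uses.

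There is, however, a genuine gap in your third step. You assert that the total-variation distance between the \emph{entire} $\sqrt{n}$-step output of $\cX$ and that of the simple walk $\cX'$ is $o(1)$, and you then plan to prove concentration for $\cX'$ and transfer it to $\cX$ through this global coupling. But the coupling bound does not give $o(1)$: transition probabilities are only bounded by $O(C^{2}/n)$, so at step $j$ the coupling fails with probability up to $O(C^{2}j/n)$, and summing over $j\le\sqrt{n}$ gives $\Theta(C^{2})$, a large constant depending on $\gamma,k$. Hence you cannot deduce $\prob{\cX\text{ good}}\ge 1/2$ from concentration for $\cX'$ in this way.

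The paper's remedy is to apply Lemma~\ref{azuma} directly to the self-avoiding walk $\cX$, not to $\cX'$. The coupling with a simple walk is used only over \emph{short} windows of length $q=(\ln n)^{2}$: conditioned on the history $\cX(j)$, one couples $(X_{j},\dots,X_{j+q})$ with a simple walk on the residual graph $G_{j}$ (which one verifies is still $\gamma/2$-Dirac, with $\weighting{x}|_{G_{j}}$ still $2C$-normal and $n^{-2/5}$-almost-perfect). Over $q$ steps the coupling fails with probability $O(q^{2}/n)=o(1)$, and $q$ exceeds the mixing time, so $\expn{g(X_{j+q})\mid\cX(j)}$ is uniformly close to $g(V_{j})/|V_{j}|$ for every history. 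This is precisely what is needed to control the conditional-expectation drift when Azuma is applied to $\cX$ itself. Your sketch has all the right pieces but assembles them in the wrong order: the short-window coupling must feed into the conditional expectations inside the martingale for $\cX$, not into a global transfer from $\cX'$ to $\cX$.
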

We now provide some important definitions and sketch the proof of Lemma \ref{iteration}.
We then collect together a number of technical lemmas, and finally use these results to prove Lemma \ref{iteration}.
\subsection{Random walk notation}
We first define some random walks which will be of central importance to the proof of Lemma \ref{iteration}.
Let~$k\geq 2$ be an integer, let~$G$ be a $k$-graph, and let $\weighting{x}\colon E(G)\rightarrow\bR^{+}$ be a positive edge weighting function.
Each of our random walks $\cZ=(Z_{-(k-2)}, Z_{-(k-3)},\dots)$ on~$V$ will begin with an ordered $(k-1)$-tuple~$(Z_{-(k-2)},\dots, Z_{0})$ chosen according to some probability distribution $\mu\colon\ordsubs{V}{k-1}\rightarrow\bR^{+}$.
We say that~$\mu$ is the \textit{initial distribution} of~$\cZ$.
Random vertices will then be added one-by-one to each~$\cZ$ according to the transition probabilities of~$\cZ$.
Suppose we are given a random walk $\cZ=(Z_{-(k-2)},\dots, Z_{j-1})$ on~$V$, up to time~$j-1$.
Then we say that \textit{semiviable} vertices for step~$j$ are those vertices~$v\in V$ satisfying $\{Z_{j-(k-1)},\dots,Z_{j-1}\}\cup\{v\}\in E(G)$.
We say that \textit{viable} vertices for step~$j$ are those vertices~$v\in V$ which are semiviable and satisfy $v\notin\{Z_{-(k-2)},\dots,Z_{j-1}\}$.
Let~$Q_{j}$ and~$R_{j}$ denote the sets of semiviable and viable vertices for step~$j$, given the random walk up to time~$j-1$, respectively.

We say that a random walk $\cX=(X_{-(k-2)},X_{-(k-3)},\dots)$ on the vertices of~$G$, with any initial distribution~$\mu$, is a \textit{self-avoiding}~$\weighting{x}$-\textit{walk} to mean that the transition probabilities of~$\cX$ for~$j\geq1$ are defined for all $v\in V$ by
\[
\prob{X_{j}=v \mid X_{-(k-2)},\dots,X_{j-1}} \coloneqq \frac{\weightel{x}{\{X_{j-(k-1)},\dots,X_{j-1}\}\cup\{v\}}\mathds{1}_{R_{j}}(v)}{\sum_{w\in R_{j}}\weightel{x}{\{X_{j-(k-1)},\dots,X_{j-1}\}\cup\{w\}}},
\]
whenever~$R_{j}$ is non-empty, otherwise we terminate the walk.
Here, and throughout, we define $\weightel{x}{S}\coloneqq 0$ for any $S\notin E(G)$.
Note that $\cX=(X_{-(k-2)},X_{-(k-3)},\dots)$ is equivalent to the random walk $\cX^{(k-1)}=(\ordscript{X}{0},\ordscript{X}{1},\dots)$,
where each~$\ordscript{X}{i}$ is the ordered $(k-1)$-tuple $\ordscript{X}{i}= (X_{i-(k-2)},\dots,X_{i})$.
We thus refer to both~$\cX$ and~$\cX^{(k-1)}$ as the self-avoiding $\weighting{x}$-walk on~$G$ with initial distribution~$\mu$, since they are reformulations of each other.

Suppose now that~$G$ has no isolated $(k-1)$-tuples (this will always be true for us). We say that a random walk $\cY=(Y_{-(k-2)},Y_{-(k-3)},\dots)$ on the vertices of~$G$ (or $\cY^{(k-1)}=(\ordscript{Y}{0}, \ordscript{Y}{1},\dots)$ on~$\ordsubs{V}{k-1}$, where $\ordscript{Y}{i}\coloneqq(Y_{i-(k-2)},\dots, Y_{i})$), with any initial distribution~$\mu$, is a \textit{simple}~$\weighting{x}$-\textit{walk} to mean that the transition probabilities of~$\cY$ for~$j\geq1$ are defined by
\begin{equation} \label{eq:transprobsimple}
\prob{Y_{j}=v \mid Y_{-(k-2)},\dots,Y_{j-1}} \coloneqq \frac{\weightel{x}{\{Y_{j-(k-1)},\dots,Y_{j-1}\}\cup\{v\}}\mathds{1}_{Q_{j}}(v)}{\sum_{w\in Q_{j}}\weightel{x}{\{Y_{j-(k-1)},\dots,Y_{j-1}\}\cup\{w\}}},
\end{equation}
for all $v\in V$.
Note that~$\cY^{(k-1)}$ is a Markov chain on~$\ordsubs{V}{k-1}$ because the transition probabilities at any time depend only on the current state.
When the stationary distribution of~$\cY^{(k-1)}$ exists and is unique, we denote it by~$\pi$,
and we say that a simple $\weighting{x}$-walk $\cW^{(k-1)}=(\ordscript{W}{0},\ordscript{W}{1},\dots)$ on the ordered $(k-1)$-tuples of~$V$ is the \textit{stationary}~$\weighting{x}$-\textit{walk} on~$G$ if the initial distribution is~$\pi$.
Again, we have that $\cW^{(k-1)}=(\ordscript{W}{0},\ordscript{W}{1},\dots)$ is equivalent to the walk $\cW=(W_{-(k-2)},W_{-(k-3)},\dots)$ on the vertices of~$G$,
where each~$\ordscript{W}{i}$ is the ordered $(k-1)$-tuple $\ordscript{W}{i}= (W_{i-(k-2)},\dots,W_{i})$.
We also call~$\cW$ the stationary $\weighting{x}$-walk, and use the vertex (or tuple) version whenever it is more convenient.
Finally, whenever the initial distribution~$\mu$ of~$\cX$ (or~$\cY$) satisfies $\mu(\ordered{S})=1$ for some $\ordered{S}\in\ordsubs{V}{k-1}$,
we say that~$\cX$ (or~$\cY$) has \textit{starting tuple}~$\ordered{S}$.
\subsection{Further notation and sketch of the proof of Lemma \ref{iteration}}\label{propersketch}
We now describe our approach to proving Lemma \ref{iteration}.
Introduce a new constant~$C$ satisfying $1/n\ll c\ll1/C\ll\gamma,1/k$, and
let~$G$ be an $(n,k,\gamma)$-graph.
By Lemma \ref{normality}, there exists a $C$-normal perfect fractional matching~$\weighting{x}$ of~$G$.
We fix such a $C$-normal~$\weighting{x}$ throughout this proof sketch.
We will analyse a self-avoiding $\weighting{x}$-walk~$\cX$ on~$G$ with starting tuple~$\ordered{S}\in\ordsubs{V}{k-1}$.
We stop the walk after time~$\kappa\coloneqq \sqrt{n}$, so that we may write $\cX=(X_{-(k-2)},\dots,X_{\kappa})$.
Note that each outcome of~$\cX$ will correspond to a tight~$\kappa$-path in~$G$, with~$\ordered{S}$ as one end.

We define $V_{j}\coloneqq V\setminus\{X_{-(k-2)},\dots, X_{j-(k-1)}\}$ to be the set of all vertices of~$G$ except for all vertices~$\cX^{(k-1)}$ has visited strictly before~$\ordscript{X}{j}$.
We say that~$V_{j}$ is the \textit{residual vertex set} of~$G$ at time~$j$.
We also define $G_{j}\coloneqq G[V_{j}]$ and say that~$G_{j}$ is the \textit{residual graph} at time~$j$.
We also write~$\cX(j)$ to denote the walk~$\cX$ up to time~$j$, specifically $\cX(j)\coloneqq(X_{-(k-2)},\dots,X_{j})$.

We will show that it is likely that the $\gamma$-Dirac property of the residual graph~$G_{\kappa}$ is still essentially intact, by showing that it is likely that the vertices that~$\cX$ visits look roughly like a uniformly random subset of~$V$ (see Lemma~\ref{uniformity}).
For this, we will use the following `tracking functions' to monitor the progress of~$\cX$, with respect to how the codegree of each $(k-1)$-tuple in the residual graph deteriorates over time.

For each unordered $(k-1)$-tuple $\unord{S}\in\unordsubs{V}{k-1}$, we define a function $g_{\unord{S}}\colon V\rightarrow\bR^{+}$ by setting $g_{\unord{S}}(v)\coloneqq\mathds{1}_{N_{G}(\unord{S})}(v)$ for each~$v\in V$, so that, in particular, if $\unord{S}\subseteq V_{j}$ then~$g_{\unord{S}}(V_{j})=d_{G_{j}}(\unord{S})$.
We call the set $\cF\coloneqq\{g_{\unord{S}}\colon \unord{S}\in\unordsubs{V}{k-1}\}$ the set of \textit{tracking functions} of~$G$.
We say that $\cX=(X_{-(k-2)},\dots,X_{\kappa})$ is \textit{good} if 
\begin{equation}\label{eq:gooddef}
\sum_{i=-(k-2)}^{\kappa}g_{\unord{S}}(X_{i})=\frac{\kappa}{n}g_{\unord{S}}(V)\pm n^{3/10}\hspace{7mm}\text{for}\hspace{1mm}\text{all}\hspace{2mm}g_{\unord{S}}\in\cF.
\end{equation}
Thus, to say that~$\cX$ is good is to say that the set of~$\kappa+k-1$ vertices that~$\cX$ visits look roughly like a uniformly random subset of~$V$,
with respect to the codegrees of all $(k-1)$-tuples.
In particular, for all $(k-1)$-tuples $\unord{S}\in\unordsubs{V}{k-1}$, the proportion of vertices of~$N_{G}(\unord{S})$ visited by~$\cX$ is approximately~$\kappa/n$,
and this is the property that will allow us to deduce condition (iii) of Lemma~\ref{iteration}.

Let $(\ordscript{X}{a},\dots,\ordscript{X}{b})$ be a suitable interval of~$\cX^{(k-1)}$,
and let $\cY^{(k-1)}=(\ordscript{Y}{0}=\ordscript{X}{a},\ordscript{Y}{1},\dots)$ be the simple $\weighting{x}$-walk on~$G_{a}$ with starting tuple~$\ordscript{X}{a}$.
To show that the walk~$\cX$ is likely to be good, the following will be the main steps:
\begin{enumerate}[label=\upshape(\roman*)]
\item Firstly we show that the behaviour of $(\ordscript{X}{a},\dots)$ follows closely that of~$\cY^{(k-1)}$ by exhibiting a coupling of the two walks such that the probability of~$\cX^{(k-1)}$ and~$\cY^{(k-1)}$ being different is acceptably small, provided~$b-a$ is small.
\item Next we see that~$\cY^{(k-1)}$ \textit{mixes} (converges to its stationary distribution~$\pi$) rapidly.
\item We also show that the stationary $\weighting{x}$-walk $\cW=(W_{0},W_{1},\dots)$ satisfies $\prob{W_{i}=v}\approx1/|V_{a}|$ for each~$v\in V_{a}$.
\end{enumerate}
Putting the above together, we see that even for small~$q$, the distribution of each~$X_{i}$, given the walk to time~$i-q$, is typically close to the uniform distribution on~$V_{i-q}$,
and thus for each tracking function~$g\in\cF$, we have $\expn{g(X_{i})}\approx\frac{1}{n-(i-q)}g(V_{i-q})\approx \frac{1}{n}g(V)$.
Lastly, then:
\begin{enumerate}[label=\upshape(\roman*)]
\item[(iv)] We show that the actual values of the quantities $\sum_{a\leq i\leq b}g(X_{i})$ are very likely to be close to their expectations, by using Lemma \ref{azuma}.
\end{enumerate}
This completes the sketch of the proof that~$\cX$ is likely to be good.
It only remains to count the number of good walks (outcomes of)~$\cX$.
The count will be obtained by simply dividing a lower bound for the probability that~$\cX$ is good, by an upper bound for the probability of obtaining any specific outcome of~$\cX$.
This completes the sketch of the proof of Lemma~\ref{iteration}.
\subsection{Random walk analysis}
In this subsection we collect some of the tools that we will use to prove Lemma~\ref{iteration}.
We firstly define some convenient terminology for edge weightings.
Let $\weighting{x}\colon E(G)\rightarrow\bR^{+}$ be a positive edge weighting of a $k$-graph~$G$.
We say that~$\weighting{x}$ is $a_{1}$-\textit{lower-balanced} if for all non-isolated $\unord{S}\in\unordsubs{V}{k-1}$ and all~$v\in N_{G}(\unord{S})$ we have\COMMENT{We need~$\unord{S}$ non-isolated so that the denominator is not zero, we need that $v\in N_{G}(\unord{S})$ as the ratio is trivially zero for all other vertices~$v$. We can formulate the sum in the denominator to be over $v'\in V$ rather than $v'\in N_{G}(\unord{S})$ because we stated that $\weightel{x}{S}\coloneqq 0$ for any $S\notin E(G)$ throughout the paper immediately after the definition of the transition probabilities of~$\cX$ on page 10.} that $\weightel{x}{\unord{S}\cup\{v\}}/\left(\sum_{v'\in V}\weightel{x}{\unord{S}\cup\{v'\}}\right)\geq a_{1}$.
That is, all possible $\weighting{x}$-walk transition probabilities are bounded below by~$a_{1}$.
Similarly, we say that~$\weighting{x}$ is $a_{2}$-\textit{upper-balanced} if for all non-isolated $\unord{S}\in\unordsubs{V}{k-1}$ and all~$v\in N_{G}(\unord{S})$ we have $\weightel{x}{\unord{S}\cup\{v\}}/\left(\sum_{v'\in V}\weightel{x}{\unord{S}\cup\{v'\}}\right)\leq a_{2}$.
We say that~$\weighting{x}$ is $(a_{1},a_{2})$-\textit{balanced} if~$\weighting{x}$ is $a_{1}$-lower-balanced and $a_{2}$-upper-balanced.
We now give a simple result which shows that we may couple the self-avoiding $\weighting{x}$-walk and the simple $\weighting{x}$-walk to behave very similarly over small distances.
\begin{lemma}\label{walkalike}
Let $k\geq 2$, let~$G$ be a $k$-graph, let $\ordered{M}\in\ordsubs{V}{k-1}$, and let $\weighting{x}\colon E(G)\rightarrow\bR^{+}$ be a positive $r$-upper-balanced edge weighting.
Let~$\cX=(X_{-(k-2)},\dots)$ and~$\cY=(Y_{-(k-2)},\dots)$ be, respectively, the self-avoiding $\weighting{x}$-walk and the simple $\weighting{x}$-walk on~$G$, each with starting tuple~$\ordered{M}$.
Then for any positive integer~$q\leq \delta(G)$, we have
\[
d_{TV}(X_{q}, Y_{q})\leq q^{2}r.
\]
\end{lemma}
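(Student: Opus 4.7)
The plan is to exhibit a coupling of $\cX$ and $\cY$ in which the two walks remain equal for as long as possible. Both walks are initialised with the common tuple $\ordered{M}$. Inductively on $j \geq 1$, given that the coupled histories have agreed through time $j-1$, first sample $Y_j$ from its conditional distribution given by~(\ref{eq:transprobsimple}); if $Y_j \in R_j$ set $X_j \coloneqq Y_j$, otherwise sample $X_j$ independently from the self-avoiding walk's conditional distribution on $R_j$ and henceforth continue the two walks independently. On the event that $\cX$ and $\cY$ have so far agreed, the conditional law of $Y_j$ restricted to $R_j$ and renormalised equals exactly the self-avoiding walk's transition from the common state (since the two transition kernels differ only in that the self-avoiding walk restricts the support to $R_j \subseteq Q_j$), so both $\cX$ and $\cY$ retain their correct marginal distributions. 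The assumption $q \leq \delta(G)$ ensures $|R_j| \geq \delta(G) - (j-1) \geq 1$ for every $j \leq q$, so $\cX$ never terminates within the relevant horizon.

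It remains to bound the probability of divergence at step $j \in [q]$, conditional on agreement up to step $j-1$. Writing $\unord{S}_j \coloneqq \{X_{j-(k-1)}, \dots, X_{j-1}\}$, divergence at step $j$ is equivalent to $Y_j \in Q_j \setminus R_j$, that is, $Y_j$ is semiviable yet previously visited. No vertex of the current window $\unord{S}_j$ can itself be semiviable, since then $\unord{S}_j \cup \{v\}$ would have fewer than $k$ distinct vertices and thus fail to be an edge. Therefore $Q_j \setminus R_j \subseteq \{X_{-(k-2)}, \dots, X_{j-k}\}$, a set of size $j-1$. The $r$-upper-balance hypothesis on $\weighting{x}$ bounds the simple-walk transition probability to any single vertex by $r$, so a union bound yields
\[
\prob{X_j \neq Y_j \mid X_i = Y_i \text{ for } i < j} \;\leq\; (j-1)\, r.
\]

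A further union bound over $j \in [q]$ gives
\[
\prob{X_q \neq Y_q} \;\leq\; \sum_{j=1}^{q} (j-1)\, r \;=\; \frac{q(q-1)}{2}\, r \;\leq\; q^{2} r,
\]
and the desired bound $d_{TV}(X_q, Y_q) \leq q^{2} r$ follows immediately from the coupling characterisation of total variation in~(\ref{eq:variation2}). I anticipate no serious obstacle: the argument is a routine deployment of the standard ``couple until they differ'' technique, with the $r$-upper-balance hypothesis serving only to convert per-vertex simple-walk probabilities into a worst-case bound, and the codegree condition $q \leq \delta(G)$ ruling out the only non-trivial degeneracy, namely premature termination of the self-avoiding walk.
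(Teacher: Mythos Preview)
Your proof is correct and follows essentially the same approach as the paper: construct the natural ``agree until $Y_j$ hits a previously visited vertex'' coupling, bound the per-step divergence probability by $r$ times the number of forbidden vertices, sum, and invoke~(\ref{eq:variation2}). Your version is in fact slightly more careful---you verify the marginals explicitly and obtain the sharper per-step bound $(j-1)r$ rather than the paper's $jr$---but the argument is otherwise identical.
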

\begin{proof}
If\COMMENT{The hypothesis $\delta(G)\geq q$ ensures that~$\cX$ cannot have terminated (and that there are no isolated $(k-1)$-tuples, as promised when we defined the transition probabilities of~$\cY$). Indeed, even the tuple~$\{v_{q-(k-1)},\dots,v_{q-1}\}$ must have at least one neighbour not previously visited by the walks, so that there is a viable candidate for~$v_{q}$.}~$1\leq i\leq \delta(G)$, and~$\cX$ and~$\cY$ agree up to time~$i-1$, say $X_{j}=Y_{j}=v_{j}\in V$ for each~$j\in\{-k+2, \dots, i-1\}$, then we couple at the next step so that~$X_{i}$ coincides with~$Y_{i}$ whenever the choice of~$Y_{i}$ is a viable choice for~$X_{i}$,
which is to say that~$Y_{i}$ is not a vertex already seen.
So with this coupling, for any positive integer $i\leq\delta(G)$ we have
\[
\prob{X_{i}\neq Y_{i}\mid X_{j}= Y_{j} \hspace{2mm} \text{for all} \hspace{2mm} j\in\{-k+2, \dots, i-1\}}=\prob{Y_{i}\in\{v_{-(k-2)},\dots,v_{i-k}\}}\leq ir.
\]
Thus for any positive integer~$q\leq\delta(G)$ we obtain
\[
\prob{X_{q}\neq Y_{q}} \leq \sum_{i=1}^{q}\prob{X_{i}\neq Y_{i}\mid X_{j}= Y_{j} \hspace{2mm} \text{for all} \hspace{2mm} j\in\{-k+2, \dots, i-1\}} \leq q^{2}r.
\]
The desired result now follows from~(\ref{eq:variation2}).
\end{proof}
We now aim to show that~$\cY^{(k-1)}$ mixes rapidly.
The key part of the proof will be the following argument that~$\cY^{(k-1)}$ has many different choices for how to arrive at a specified target ordered tuple~$\ordered{T}\in\ordsubs{V}{k-1}$ in a fixed number of steps.
\begin{lemma}\label{manywalks}
Let $1/n\ll\zeta\ll1/\ell\ll\gamma,1/k$, where~$k\geq2$, and let~$G$ be an $(n,k,\gamma)$-graph.
For any~$\ordered{S},\ordered{T}\in\ordsubs{V}{k-1}$, there are at least $\zeta n^{\ell-(k-1)}$~$\ell$-walks from~$\ordered{S}$ to~$\ordered{T}$ in~$G$.
Further, if~$\unord{S}\cap\unord{T}=\emptyset$, then there are at least $\zeta n^{\ell-(k-1)}$~$\ell$-paths in~$G$ which connect~$\ordered{S}$ and~$\reversed{T}$.
\end{lemma}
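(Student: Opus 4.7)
My plan is to prove the walk count by induction on $\ell$ and then derive the path count by showing that most walks are paths. Writing $M(\ordered{S}, \ordered{T}, \ell)$ for the number of $\ell$-walks from $\ordered{S}$ to $\ordered{T}$, and $\ordered{T} = (t_1, \ldots, t_{k-1})$, the inductive step rests on the observation that every $(\ell+1)$-walk from $\ordered{S}$ to $\ordered{T}$ uniquely decomposes as an $\ell$-walk from $\ordered{S}$ ending at $(v, t_1, \ldots, t_{k-2})$, for some $v \in N_G(\unord{T}) \setminus \{t_1, \ldots, t_{k-2}\}$, followed by the vertex $t_{k-1}$. The minimum-codegree condition yields at least $(1/2+\gamma/2)n$ valid choices of $v$, so by the inductive hypothesis
\[
M(\ordered{S}, \ordered{T}, \ell+1) \geq (1/2+\gamma/2)\,\zeta_\ell\, n^{(\ell+1)-(k-1)},
\]
where $\zeta_\ell$ is the constant achieved at length $\ell$. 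Setting $\zeta_{\ell+1} := (1/2+\gamma/2)\zeta_\ell$ continues the induction; since the hierarchy forces $\ell$ to be bounded in $\gamma, k$ alone, iterating shrinks the constant by only a bounded factor in total, leaving $\zeta_\ell$ positive.

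For the base case at a constant length $\ell_0 = \ell_0(\gamma, k)$, I aim to show $M(\ordered{S}, \ordered{T}, \ell_0) \geq \zeta_0 n^{\ell_0 - (k-1)}$ for all $\ordered{S}, \ordered{T}$. The strategy is: greedily extend $\ordered{S}$ for most of the length (say $\ell_0 - 2L$ steps, with $L := 2k/\gamma^2$ the bound from Lemma~\ref{connectinglemma}), producing many walks ending at various tuples $\ordered{M}$; for each such $\ordered{M}$ disjoint from $\unord{T}$, apply Lemma~\ref{connectinglemma} in $G$ minus the already-used vertices (which remains $\gamma/2$-Dirac by Proposition~\ref{tidytool}) to connect $\ordered{M}$ to $\ordered{T}$; and finally pad each resulting walk to length exactly $\ell_0$ using short closed walks at $\ordered{T}$. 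A closed walk of length $k$ at $\ordered{T}$ is immediate (take $(t_1, \ldots, t_{k-1}, v, t_1, \ldots, t_{k-1})$ for any $v \in N_G(\unord{T})$), and a closed walk of length coprime to $k$ can be constructed by a longer back-and-forth via Lemma~\ref{connectinglemma}; the numerical semigroup generated by two coprime lengths supplies closed walks at $\ordered{T}$ of every sufficiently large length, furnishing the padding.

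With the walk bound in hand, the path bound follows from bounding walks with repeated vertices. For any pair of positions in an $\ell$-walk, fixing them to coincide costs one factor of $n$ in the choices, so the number of bad walks is at most $\binom{\ell+k-1}{2}\cdot O(n^{\ell-k}) = o(n^{\ell-(k-1)})$. Hence almost all walks are paths, with the same lower bound (up to a slightly smaller $\zeta$). The appearance of $\reversed{T}$ in the statement is purely notational: by the definition of a tight path's ends, a walk from $\ordered{S}$ to $\ordered{T}$ is exactly a path connecting $\ordered{S}$ and $\reversed{T}$.

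The main obstacle is the base case. Greedy extensions are abundant but their end-tuples may concentrate, so it is non-trivial to guarantee that a positive fraction of them end at a tuple that can then be connected to $\ordered{T}$ within $L$ steps. A cleaner alternative would be to invoke the mixing-type analysis previewed in Section~\ref{propersketch}: using the $C$-normal perfect fractional matching from Lemma~\ref{normality} to weight the transitions, one argues that the associated simple random walk mixes to near-uniform on $\ordsubs{V}{k-1}$ within $O_{\gamma, k}(1)$ steps, so the number of $\ell_0$-walks ending at any prescribed $\ordered{T}$ is a $\Theta(1/n^{k-1})$-fraction of the total $\Theta(n^{\ell_0})$, giving the required $\Theta(n^{\ell_0 - (k-1)})$.
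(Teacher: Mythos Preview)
Your inductive step and the deduction of the path count from the walk count are both fine. The real gap is the base case, and neither of your two proposals closes it.

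For the greedy-plus-connecting argument: after $m=\ell_0-2L$ greedy steps you have $\Theta(n^{m})$ walks from $\ordered{S}$, but then Lemma~\ref{connectinglemma} supplies only \emph{one} connecting path per end-tuple $\ordered{M}$, and your padding by closed walks at $\ordered{T}$ likewise contributes no new factor of $n$. So after concatenation you still have only $\Theta(n^{m})=\Theta(n^{\ell_0-2L})$ walks of length $\ell_0$, whereas you need $\Theta(n^{\ell_0-(k-1)})$. Since $L=2k/\gamma^2\gg k-1$, you are short by a factor of $n^{2L-(k-1)}$, which blows up with $n$. (Incidentally, the hierarchy $1/\ell\ll\gamma,1/k$ means $\ell$ is \emph{large} in terms of $\gamma,k$, not bounded; this does not hurt your induction, since $\zeta\ll1/\ell$ lets $\zeta$ absorb the $(1/2+\gamma/2)^{\ell-\ell_0}$ loss, but it is worth getting right.)

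For the mixing alternative: this is circular in the paper's logical order. The Mixing Lemma (Lemma~\ref{mixing}) is proved \emph{using} Lemma~\ref{manywalks}; it is precisely the existence of $\zeta n^{\ell-(k-1)}$ walks to every target that furnishes the uniform lower bound $\prob{\ordscript{Z}{\ell}=\ordered{T}}\geq\zeta\tau^\ell/n^{k-1}$ driving the coupling. You cannot invoke mixing here.

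The paper bypasses induction entirely. It fixes $a=\ell-(k-1)$, takes a uniformly random $a$-subset $Q\subseteq V\setminus(\unord{S}\cup\unord{T})$, and uses Proposition~\ref{tidytool} to show that for at least half of all such $Q$ the induced subgraph $G[\unord{S}\cup Q\cup\unord{T}]$ is $\gamma/2$-Dirac. In each such subgraph it then applies tight Hamilton-connectedness (Lemma~\ref{hampath}) to produce a Hamilton path from $\ordered{S}$ to $\reversed{T}$; this is an $\ell$-walk from $\ordered{S}$ to $\ordered{T}$ using exactly the vertices $\unord{S}\cup Q\cup\unord{T}$, hence distinct $Q$'s give distinct walks. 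That yields $\frac12\binom{n-|\unord{S}\cup\unord{T}|}{a}\ge\zeta n^{\ell-(k-1)}$ walks directly, with no induction and no padding. When $\unord{S}\cap\unord{T}=\emptyset$ these walks are automatically paths. The ingredient you are missing is Lemma~\ref{hampath}: a single connection of bounded length (Lemma~\ref{connectinglemma}) is too weak, but a Hamilton path on a prescribed vertex set of size $\ell+k-1$ is exactly what is needed to turn each of $\Theta(n^{\ell-(k-1)})$ choices of $Q$ into a walk of the correct length.
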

\begin{proof}
Let $\ordered{S},\ordered{T}\in\ordsubs{V}{k-1}$,
set $a\coloneqq\ell-(k-1)$,
let~$\cQ\coloneqq\binom{V\setminus (\unord{S}\cup\unord{T})}{a}$,
let $G_{Q}\coloneqq G[\unord{S}\cup Q\cup\unord{T}]$ for each~$Q\in\cQ$,
and define $\cQ'\coloneqq\{Q\in\cQ\colon G_{Q} \hspace{1mm} \text{is} \hspace{1mm} (\gamma/2)\text{-Dirac}\}$.
We apply Proposition~\ref{tidytool} (with $\cP=\{\{v\}\colon v\in V\}$, $\cP_{0}=\{\{v\}\colon v\in\unord{S}\cup\unord{T}\}$, and with~$a$,~$1$,~$|\unord{S}\cup\unord{T}|$ playing the roles of~$m$,~$\ell$,~$t$, respectively) to deduce that $|\cQ'|\geq |\cQ|/2$.
Fix~$Q\in\cQ'$ and write $\ordered{T}=(t_{1},\dots,t_{k-1})$.
Since $G[Q\cup \unord{T}]$ is $\gamma/4$-Dirac, for each $i\in [k-1]$ in turn we may find a vertex $v_{i}\in Q\setminus\{v_{1},\dots,v_{i-1}\}$ such that $\{v_{1},\dots,v_{i}\}\cup\{t_{1},\dots,t_{k-i}\}$ is an edge.
Write $\ordered{T'}\coloneqq(v_{k-1},\dots,v_{1})$.
Thus, we obtain a $(k-1)$-path $P_{Q}^{2}$ in~$G_{Q}$ which connects~$\ordered{T'}$ and~$\reversed{T}$.
Since $G[\unord{S}\cup Q]$ is $\gamma/4$-Dirac, we may apply Lemma~\ref{hampath} (with~$\gamma/4$ playing the role of~$\gamma$) to find an $a$-path~$P_{Q}^{1}$ in $G[\unord{S}\cup Q]$ which connects~$\ordered{S}$ and~$\reversed{T'}$,
and the obvious concatenation of~$P_{Q}^{1}$ and~$P_{Q}^{2}$ is an $\ell$-walk~$W_{Q}$ from~$\ordered{S}$ to~$\ordered{T}$ in~$G_{Q}$.
It is clear that these $\ell$-walks~$W_{Q}$ are distinct for different choices of~$Q\in\cQ'$.
It thus suffices to observe that $|\cQ'| \geq \frac{1}{2}|\cQ| \geq \zeta n^{\ell-(k-1)}$.
If $\unord{S}\cap\unord{T}=\emptyset$, then the walks~$W_{Q}$ do not revisit vertices and thus correspond to $\ell$-paths in~$G$ which connect~$\ordered{S}$ and~$\reversed{T}$.
\end{proof}
Lemma~\ref{manywalks} shows that in an $(n,k,\gamma)$-graph~$G$,
the Markov chain~$\cY^{(k-1)}$ is irreducible, and thus there is a unique stationary distribution~$\pi$ of~$\cY^{(k-1)}$.
We will use this fact without stating it from now on.
We note that it also follows from Lemma~\ref{manywalks} that~$\cY^{(k-1)}$ is aperiodic\COMMENT{Let $\ordered{S}\in\ordsubs{V}{k-1}$ be a state of~$\cY^{(k-1)}$. We apply Lemma~\ref{manywalks} with $\ell=m$, say, to see that $\prob{\ordscript{Y}{m}=\ordered{S}\mid\ordscript{Y}{0}=\ordered{S}}>0$. We then apply Lemma~\ref{manywalks} with $\ell=m+1$ (assuming~$n$ is large enough for both of these applications), to see that $\prob{\ordscript{Y}{m+1}=\ordered{S}\mid\ordscript{Y}{0}=\ordered{S}}>0$. Thus the period of~$\ordered{S}$ is~$1$. As~$\ordered{S}$ was arbitrary, we conclude that~$\cY^{(k-1)}$ is aperiodic.}, which implies that the distribution of~$\ordscript{Y}{t}$ converges to~$\pi$ as $t\rightarrow \infty$. However, we need something stronger, namely that this convergence occurs quickly.
This is achieved by the following lemma, which shows that~$\cY^{(k-1)}$ mixes rapidly.
\begin{lemma}[Mixing Lemma]\label{mixing}
Let $1/n \ll 1/\lambda \ll \gamma,\tau,1/k$, where $k \geq 2$, let~$G$ be an $(n,k,\gamma)$-graph, let $\mu\colon\ordsubs{V}{k-1}\rightarrow\bR$ be a probability distribution, and let $\weighting{x}\colon E(G)\rightarrow\bR^{+}$ be a positive $(\tau/n)$-lower-balanced edge weighting.
Let~$\cY=(Y_{-(k-2)},\dots)$ be the simple $\weighting{x}$-walk on~$G$ with initial distribution~$\mu$, and let~$\cW=(W_{-(k-2)},\dots)$ be the stationary $\weighting{x}$-walk on~$G$.
For any~$\eta>0$, if $q\geq\lambda\ln(1/\eta)$, then $d_{TV}(Y_{q}, W_{q}) <\eta$.
\end{lemma}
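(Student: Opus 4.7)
The plan is to establish a Doeblin-type minorisation of the $\ell$-step transition kernel of the Markov chain~$\cY^{(k-1)}$ on~$\ordsubs{V}{k-1}$ for a suitably chosen constant~$\ell$, and then deduce rapid mixing via a standard coupling argument. Let $P$ denote the one-step transition kernel of~$\cY^{(k-1)}$ and write $K := P^\ell$.

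First, I would fix constants~$\ell$ and~$\zeta$ with $1/\lambda \ll 1/\ell \ll \zeta \ll \gamma, \tau, 1/k$. By Lemma~\ref{manywalks}, for any pair $\ordered{S}, \ordered{T} \in \ordsubs{V}{k-1}$ there are at least $\zeta n^{\ell-(k-1)}$ $\ell$-walks in~$G$ from~$\ordered{S}$ to~$\ordered{T}$. For the simple $\weighting{x}$-walk started at~$\ordered{S}$, the probability of traversing any specific such $\ell$-walk equals the product of its $\ell$ transition probabilities, and the $(\tau/n)$-lower-balance of~$\weighting{x}$ forces each of these factors to be at least~$\tau/n$ (and the codegree condition ensures the walk never terminates in $\ell$ steps, since it may revisit vertices). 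Summing over all $\ell$-walks from~$\ordered{S}$ to~$\ordered{T}$ therefore yields
\[
K(\ordered{S}, \ordered{T}) \;\geq\; \zeta n^{\ell-(k-1)} \cdot (\tau/n)^\ell \;=\; \frac{\zeta \tau^\ell}{n^{k-1}} \;\geq\; \frac{2\alpha}{(n)_{k-1}},
\]
where $\alpha := \zeta\tau^\ell/4$ is a positive constant independent of~$n$.

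Let $\nu$ be the uniform distribution on~$\ordsubs{V}{k-1}$; the inequality above says $K(\ordered{S}, \cdot) \geq 2\alpha \, \nu(\cdot)$ uniformly in~$\ordered{S}$. Decompose $K(\ordered{S}, \cdot) = 2\alpha \, \nu(\cdot) + (1-2\alpha) R(\ordered{S}, \cdot)$ for some Markov kernel~$R$, and couple the embedded chains $(\ordscript{Y}{j\ell})_{j \geq 0}$ and $(\ordscript{W}{j\ell})_{j \geq 0}$ macro-step by macro-step as follows. At each macro-step, independently toss a $2\alpha$-biased coin: on heads, draw a single state from~$\nu$ and assign it as the next state of both chains; on tails, draw the next states independently from~$R$ applied to the respective current states. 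Once the chains first agree at a macro-step, continue with identical randomness thereafter (permissible because $P$ depends only on the current state). After $j$ macro-steps the chains differ with probability at most $(1-2\alpha)^j$; taking $j = \lfloor q/\ell \rfloor$ and projecting onto the last coordinate yields, via~\eqref{eq:variation2},
\[
d_{TV}(Y_q, W_q) \;\leq\; d_{TV}(\ordscript{Y}{q}, \ordscript{W}{q}) \;\leq\; (1-2\alpha)^{\lfloor q/\ell \rfloor}.
\]
Choosing $\lambda$ sufficiently large in terms of~$\alpha$ and~$\ell$ makes the right-hand side less than~$\eta$ whenever $q \geq \lambda \ln(1/\eta)$.

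The main obstacle is the Doeblin minorisation, which is essentially the reason Lemma~\ref{manywalks} was proved in the preceding subsection; the subsequent coupling argument is a standard consequence. It is worth noting that Lemma~\ref{manywalks} counts $\ell$-walks rather than $\ell$-paths, but this is exactly what is required, as the simple $\weighting{x}$-walk is free to revisit vertices.
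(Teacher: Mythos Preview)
Your argument is correct and is essentially the paper's proof phrased in the standard language of Doeblin minorisation: the paper also uses Lemma~\ref{manywalks} together with the $(\tau/n)$-lower-balance to get a uniform lower bound on the $\ell$-step transition probabilities, and then couples in blocks of length~$\ell$ so that the two chains coalesce at each block with a fixed positive probability. Your explicit decomposition $K=2\alpha\nu+(1-2\alpha)R$ and coin-flip coupling is just the textbook way of writing what the paper does more informally.

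One small slip: your hierarchy $1/\ell \ll \zeta$ is the wrong way round for invoking Lemma~\ref{manywalks}, which requires $\zeta \ll 1/\ell$ (as in the paper's choice $1/\lambda \ll \zeta \ll 1/\ell \ll \gamma,\tau,1/k$). This is purely cosmetic and does not affect the argument.
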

\begin{proof}
Choose new constants~$\zeta$ and~$\ell$ satisfying $1/n \ll 1/\lambda \ll \zeta\ll1/\ell\ll\gamma,\tau,1/k$.
We proceed by using Lemma \ref{manywalks} to show that for two simple $\weighting{x}$-walks~$\cZ^{(k-1)}$ and~$\cZ'^{(k-1)}$ on~$G$ given any initial distributions, we can find a coupling such that~$\cZ^{(k-1)}$ and~$\cZ'^{(k-1)}$ are relatively likely to meet after~$\ell$ steps.
Using this, we then exhibit a coupling of~$\cY^{(k-1)}$ and~$\cW^{(k-1)}$ that will allow us to use (\ref{eq:variation2}) to upper bound~$d_{TV}(Y_{q}, W_{q})$.

Let~$\cZ^{(k-1)}$ be a simple $\weighting{x}$-walk on~$G$ with any initial distribution~$\phi$, and fix any~$\ordered{T}\in\ordsubs{V}{k-1}$.
By Lemma \ref{manywalks},~$G$ has at least~$\zeta n^{\ell-(k-1)}$ $\ell$-walks from~$\ordered{S}$ to~$\ordered{T}$, for each~$\ordered{S}\in\ordsubs{V}{k-1}$, and thus:
\begin{eqnarray*}
\prob{\ordscript{Z}{\ell}=\ordered{T}} & = & \sum_{\ordered{S}\in\ordsubs{V}{k-1}}\phi(\ordered{S})\prob{\ordscript{Z}{\ell}=\ordered{T}\mid\ordscript{Z}{0}=\ordered{S}}\geq\sum_{\ordered{S}\in\ordsubs{V}{k-1}}\phi(\ordered{S})\zeta n^{\ell-(k-1)}(\tau/n)^{\ell} \\ & = & \zeta\tau^{\ell}/n^{k-1}.
\end{eqnarray*}
Thus we may construct a coupling of any pair of simple $\weighting{x}$-walks~$\cZ^{(k-1)}$ and~$\cZ'^{(k-1)}$ such that $\prob{\ordscript{Z}{\ell}=\ordscript{Z}{\ell}'=\ordered{T}}\geq \zeta\tau^{\ell}/n^{k-1}$ for all $\ordered{T}\in\ordsubs{V}{k-1}$.\COMMENT{To construct such a coupling~$h(\ordscript{Z}{\ell}, \ordscript{Z}{\ell}')$ of the random variables~$\ordscript{Z}{\ell}$ and~$\ordscript{Z}{\ell}'$, we can set $h(\ordered{T},\ordered{T})=\zeta\tau^{\ell}/n^{k-1}$ for all $\ordered{T}\in\ordsubs{V}{k-1}$ (constructing the rest of this coupling arbitrarily). To see (informally) how this corresponds to a formal coupling of the random walks~$\cZ^{(k-1)}$ and~$\cZ'^{(k-1)}$, one can partition the unit interval into~$|\ordsubs{V}{k-1}|$ intervals of size~$\zeta\tau^{\ell}/n^{k-1}$ and the remaining interval. The idea is that we associate each of the first $|\ordsubs{V}{k-1}|$ intervals with some $\ordered{T}\in\ordsubs{V}{k-1}$, and partition each of these intervals into~$\zeta n^{\ell-(k-1)}$ sub-intervals of size~$(\tau/n)^{\ell}$. Fix $\ordered{T}\in\ordsubs{V}{k-1}$ and order these sub-intervals $I_{\ordered{T}}(1),\dots, I_{\ordered{T}}(\zeta n^{\ell-(k-1)})$. We arbitrarily assign (injectively) outcomes $\cZ_{\ordered{T}}(i)$ of~$\cZ^{(k-1)}(\ell)$ satisfying $\ordscript{Z}{\ell}=\ordered{T}$ to the intervals~$I_{\ordered{T}}(i)$, and we do the same for $\cZ'^{(k-1)}(\ell)$. We complete the mapping arbitrarily (to a valid coupling) in the interval not corresponding to any~$\ordered{T}$. Then we consider a uniform random variable~$X$ on the interval~$[0,1]$ and set $\cZ^{(k-1)}(\ell)=\cZ_{\ordered{T}}(i)$ and $\cZ'^{(k-1)}(\ell)=\cZ'_{\ordered{T}}(i)$ if $X\in I_{\ordered{T}}(i)$.}
Under this coupling, we have
\begin{equation}\label{eq:zcoupling}
\prob{\ordscript{Z}{\ell}=\ordscript{Z'}{\ell}}\geq\frac{\zeta\tau^{\ell}}{n^{k-1}}\left|\ordsubs{V}{k-1}\right|\geq \zeta\tau^{\ell}\left(\frac{n-(k-2)}{n}\right)^{k-1}\geq\zeta\tau^{\ell}/2.
\end{equation}
We now construct a coupling of~$\cY^{(k-1)}$ and~$\cW^{(k-1)}$ as follows.
We partition the time steps into consecutive intervals of length~$\ell$.
In the first interval, we couple~$\cY^{(k-1)}$ and~$\cW^{(k-1)}$ as in~(\ref{eq:zcoupling}), so that $\prob{\ordscript{Y}{\ell}=\ordscript{W}{\ell}}\geq\zeta\tau^{\ell}/2$.
If $\ordscript{Y}{\ell}=\ordscript{W}{\ell}$, then we couple~$\cY^{(k-1)}$ and~$\cW^{(k-1)}$ such that $\ordscript{Y}{t}=\ordscript{W}{t}$ for all $t\geq\ell$.
Otherwise we again couple~$\cY^{(k-1)}$ and~$\cW^{(k-1)}$ in the second time interval, as in~(\ref{eq:zcoupling}), so that $\prob{\ordscript{Y}{2\ell}=\ordscript{W}{2\ell}\mid\ordscript{Y}{\ell}\neq\ordscript{W}{\ell}}\geq\zeta\tau^{\ell}/2$.
One can easily check\COMMENT{One only needs to check that the transition probabilities for each walk are maintained, which is clear whether the walks meet or not.} that repeating this process yields a valid coupling of~$\cY^{(k-1)}$ and~$\cW^{(k-1)}$.
Note that, with this coupling and for any~$q$ which is sufficiently large compared to~$\ell$, we have
\[
\prob{\ordscript{Y}{q}\neq\ordscript{W}{q}} \leq\prod_{k=1}^{q/\ell}\prob{\ordscript{Y}{k\ell}\neq\ordscript{W}{k\ell}\bigg|\bigcap_{j\leq k-1}\{\ordscript{Y}{j\ell}\neq\ordscript{W}{j\ell}\}} \leq (1-\zeta\tau^{\ell}/2)^{q/\ell}.
\]
We use this coupling and apply~(\ref{eq:variation2}) to obtain\COMMENT{Note clearly $(Y_{q}\neq W_{q})\implies (\ordscript{Y}{q}\neq\ordscript{W}{q})$.}
\[
d_{TV}(Y_{q}, W_{q})\leq\prob{Y_{q}\neq W_{q}}\leq\prob{\ordscript{Y}{q}\neq\ordscript{W}{q}}\leq (1-\zeta\tau^{\ell}/2)^{q/\ell}\leq\exp(-\zeta\tau^{\ell}q/2\ell)<\eta,
\]
provided $q\geq\lambda\ln(1/\eta)$.
\end{proof}
It will be useful to have an explicit formula for the stationary distribution~$\pi$ of~$\cY^{(k-1)}$, and so we obtain this now.
(Observe that the simple $\weighting{x}$-walk~$\cY^{(k-1)}$ on~$\ordsubs{V}{k-1}$ is in general not a symmetric Markov chain.)
\begin{prop}\label{explicitstatdist}
Let $1/n \ll \gamma \ll 1/k$ where $k \geq 2$, and let~$G$ be an $(n,k,\gamma)$-graph.
Let $\weighting{x}\colon E(G)\rightarrow\bR^{+}$ be a positive edge weighting of~$G$, and for each~$\ordered{M}\in\ordsubs{V}{k-1}$, define
\begin{equation}\label{eq:explicit}
\pi(\ordered{M}) \coloneqq \frac{\sum_{v\in V}\weightel{x}{\unord{M}\cup\{v\}}}{\sum_{\ordered{B}\in\ordsubs{V}{k-1}}\sum_{v\in V}\weightel{x}{\unord{B}\cup\{v\}}}.
\end{equation}
Then~$\pi$ is the unique stationary distribution of the simple $\weighting{x}$-walk~$\cY^{(k-1)}$ on~$\ordsubs{V}{k-1}$.
\end{prop}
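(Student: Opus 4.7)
The plan is to verify directly that the stated $\pi$ satisfies $\pi = \pi P$ for the transition matrix $P$ of $\cY^{(k-1)}$, and then to deduce uniqueness from irreducibility via Lemma~\ref{manywalks}. First I would observe that $\pi$ is a valid probability distribution on $\ordsubs{V}{k-1}$: it is nonnegative, the normaliser
\[
Z\coloneqq\sum_{\ordered{B}\in\ordsubs{V}{k-1}}\sum_{v\in V}\weightel{x}{\unord{B}\cup\{v\}}
\]
is positive (since $G$ is $\gamma$-Dirac and so has edges contributing to the sum), and $\sum_{\ordered{M}}\pi(\ordered{M})=1$ by construction.

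The key computation is the balance equation. Fix $\ordered{T}=(t_1,\ldots,t_{k-1})\in\ordsubs{V}{k-1}$. The only predecessors of $\ordered{T}$ in the chain $\cY^{(k-1)}$ are the tuples $\ordered{S}_{u}\coloneqq(u,t_1,\ldots,t_{k-2})$ for $u\in V$, because the chain always drops its first coordinate and appends a new one. From~(\ref{eq:transprobsimple}), using the convention $\weightel{x}{S}=0$ for $S\notin E(G)$ (which absorbs the factor $\mathds{1}_{Q_j}$), a direct substitution gives
\[
\pi(\ordered{S}_{u})\cdot P(\ordered{S}_{u},\ordered{T})=\frac{\sum_{w\in V}\weightel{x}{\unord{S}_{u}\cup\{w\}}}{Z}\cdot\frac{\weightel{x}{\unord{S}_{u}\cup\{t_{k-1}\}}}{\sum_{w\in V}\weightel{x}{\unord{S}_{u}\cup\{w\}}}=\frac{\weightel{x}{\{u,t_1,\ldots,t_{k-1}\}}}{Z},
\]
the denominator of $P$ cancelling with the numerator of $\pi(\ordered{S}_{u})$. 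Summing over $u\in V$ and noting that $\{u,t_1,\ldots,t_{k-1}\}=\unord{T}\cup\{u\}$ when $u\notin\unord{T}$ (and both sides vanish otherwise) yields
\[
\sum_{u\in V}\pi(\ordered{S}_{u})P(\ordered{S}_{u},\ordered{T})=\frac{\sum_{u\in V}\weightel{x}{\unord{T}\cup\{u\}}}{Z}=\pi(\ordered{T}),
\]
which is the desired balance identity.

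For uniqueness, I would apply Lemma~\ref{manywalks}: for any ordered $(k-1)$-tuples $\ordered{S},\ordered{T}\in\ordsubs{V}{k-1}$ there exists an $\ell$-walk from $\ordered{S}$ to $\ordered{T}$ in $G$ (for $\ell$ of appropriate size), and since $\weighting{x}$ is strictly positive, each such walk has positive probability under $\cY^{(k-1)}$. Hence $\cY^{(k-1)}$ is irreducible on the finite state space $\ordsubs{V}{k-1}$, and standard Markov chain theory gives a unique stationary distribution, which must coincide with $\pi$. There is no genuine obstacle here: the whole content of the argument is the cancellation displayed above, and the rest is bookkeeping.
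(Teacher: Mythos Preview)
Your proposal is correct and follows essentially the same approach as the paper: verify that $\pi$ is a probability distribution, check the balance equation $\pi P=\pi$ via the cancellation between the numerator of $\pi(\ordered{S})$ and the denominator of $P(\ordered{S},\ordered{T})$, and invoke irreducibility (from Lemma~\ref{manywalks}) for uniqueness. One very minor point of bookkeeping: when you sum over $u\in V$, the tuples $\ordered{S}_u$ with $u\in\{t_1,\dots,t_{k-2}\}$ are not elements of $\ordsubs{V}{k-1}$ at all, so strictly the sum should run over $u\in V\setminus\{t_1,\dots,t_{k-2}\}$; this is harmless since the corresponding terms $\weightel{x}{\unord{T}\cup\{u\}}$ on the right vanish anyway.
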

\begin{proof}
By standard results on the stationary distribution of a Markov chain (see~\cite[Proposition 1.20]{L17} for example), it suffices to prove that $\pi\colon\ordsubs{V}{k-1}\rightarrow\bR^{+}$ as defined in (\ref{eq:explicit}) is a probability distribution on~$\ordsubs{V}{k-1}$, and that
\begin{equation}\label{eq:firstly}
\sum_{\ordered{S}\in\ordsubs{V}{k-1}}\pi(\ordered{S})P(\ordered{S},\ordered{T}) = \pi(\ordered{T}) \hspace{7mm}\text{for}\ \text{all}\ \ordered{T}\in\ordsubs{V}{k-1},
\end{equation}
where~$P(\ordered{S},\ordered{T})$ denotes the (one-step) transition probability of~$\cY^{(k-1)}$ from~$\ordered{S}$ to~$\ordered{T}$.
It follows quickly\COMMENT{Clearly $\pi(\ordered{M})\geq 0$ for all $\ordered{M}\in\ordsubs{V}{k-1}$, and we have
\[
\sum_{\ordered{M}\in\ordsubs{V}{k-1}}\pi(\ordered{M}) = \sum_{\ordered{M}\in\ordsubs{V}{k-1}}\frac{\sum_{v\in V}\weightel{x}{\unord{M}\cup\{v\}}}{\sum_{\ordered{B}\in\ordsubs{V}{k-1}}\sum_{v\in V}\weightel{x}{\unord{B}\cup\{v\}}} = 1.
\]}
from (\ref{eq:explicit}) that~$\pi$ is a probability distribution on~$\ordsubs{V}{k-1}$, and~(\ref{eq:firstly}) follows\COMMENT{Fix $\ordered{T}\in\ordsubs{V}{k-1}$. 
Write $\ordered{T}=(t_{1},\dots, t_{k-1})$ and define~$\text{Pre}(\ordered{T})\coloneqq\{\ordered{M}=(m_{1},\dots, m_{k-1})\in\ordsubs{V}{k-1}\colon \unord{M}\cup\unord{T}\in E(G), t_{i}=m_{i+1}\,\,\, \forall \,\,\,1\leq i\leq k-2\}$,
so that~$\text{Pre}(T)$ is the set of possible predecessors of~$\ordered{T}$ in~$\cY^{(k-1)}$.
Similarly, define $\text{Succ}(\ordered{T})\coloneqq\{\ordered{M}=(m_{1},\dots, m_{k-1})\in\ordsubs{V}{k-1}\colon \unord{M}\cup\unord{T}\in E(G), m_{i}=t_{i+1}\,\,\, \forall \,\,\,1\leq i\leq k-2\}$.
Notice firstly that
\[
\sum_{\ordered{S}\in\ordsubs{V}{k-1}}\weightel{x}{\unord{S}\cup\unord{T}}\mathds{1}_{\text{Pre}(\ordered{T})}(\ordered{S})=\sum_{e\supset \unord{T}}\weightel{x}{e}=\sum_{\ordered{S}\in\ordsubs{V}{k-1}}\weightel{x}{\unord{S}\cup\unord{T}}\mathds{1}_{\text{Succ}(\ordered{T})}(\ordered{S}).
\]
Thus, applying~(\ref{eq:transprobsimple}) and~(\ref{eq:explicit}), we see that
\begin{eqnarray*}
\sum_{\ordered{S}\in\ordsubs{V}{k-1}}\pi(\ordered{S})P(\ordered{S},\ordered{T}) & = & \sum_{\ordered{S}\in\ordsubs{V}{k-1}}\frac{\sum_{v\in V}\weightel{x}{\unord{S}\cup\{v\}}}{\sum_{\ordered{B}\in\ordsubs{V}{k-1}}\sum_{v\in V}\weightel{x}{\unord{B}\cup\{v\}}}\frac{\weightel{x}{\unord{S}\cup \unord{T}}\mathds{1}_{\text{Pre}(\ordered{T})}(\ordered{S})}{\sum_{v\in V}\weightel{x}{\unord{S}\cup\{v\}}} \\ & = & \frac{1}{\sum_{\ordered{B}\in\ordsubs{V}{k-1}}\sum_{v\in V}\weightel{x}{\unord{B}\cup\{v\}}}\sum_{\ordered{S}\in\ordsubs{V}{k-1}}\weightel{x}{\unord{S} \cup \unord{T}}\mathds{1}_{\text{Pre}(\ordered{T})}(\ordered{S}) \\ & = & \frac{1}{\sum_{\ordered{B}\in\ordsubs{V}{k-1}}\sum_{v\in V}\weightel{x}{\unord{B}\cup\{v\}}}\sum_{\ordered{S}\in\ordsubs{V}{k-1}}\weightel{x}{\unord{S} \cup \unord{T}}\mathds{1}_{\text{Succ}(\ordered{T})}(\ordered{S}) \\ & = & \sum_{\ordered{S}\in\ordsubs{V}{k-1}}\pi(\ordered{T})P(\ordered{T},\ordered{S})=\pi(\ordered{T}).
\end{eqnarray*}} from applying~(\ref{eq:transprobsimple}) and~(\ref{eq:explicit}).
\end{proof}
Next we show that, provided the edge weighting $\weighting{x}\colon E(G)\rightarrow\bR^{+}$ is an `almost-perfect' fractional matching, the stationary $\weighting{x}$-walk on~$G$ is such that each vertex of~$G$ is roughly equally likely to be the current vertex at any time.
Let $\eps\in (0,1)$ and $k\geq2$.
For a $k$-graph~$G$, we say that a fractional matching $\weighting{x}\colon E(G)\rightarrow\bR^{+}$ is $\eps$-\textit{almost-perfect} if $\sum_{e\ni v}\weightel{x}{e}\geq 1-\eps$ for all~$v\in V$.
\begin{lemma}\label{welltravelled}
Let $1/n\ll \gamma, 1/k$, where $k\geq 2$, and let~$G$ be an $(n,k,\gamma)$-graph.
Suppose that $\weighting{x}\colon E(G)\rightarrow\bR^{+}$ is an $n^{-2/5}$-almost-perfect fractional matching of~$G$ and let $\cW=(W_{-(k-2)}, W_{-(k-3)}, \dots)$ be the stationary $\weighting{x}$-walk on~$G$.
Then for each~$i\in\bN$ and each $v\in V$ we have
\begin{equation}\label{eq:nearlyuniform}
\prob{W_{i}=v}=(1\pm n^{-1/3})\cdot\frac{1}{n}.
\end{equation}
\end{lemma}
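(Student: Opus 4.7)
The plan is to compute $\prob{W_i = v}$ directly from the explicit stationary distribution provided by Proposition~\ref{explicitstatdist}. Since $\cW^{(k-1)}$ is the stationary walk, the ordered tuple $\ordscript{W}{i} = (W_{i-(k-2)},\dots,W_i)$ has law $\pi$ for every $i \in \bN$, and $W_i$ is its last coordinate. Thus I need only marginalise $\pi$ onto this last coordinate.

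The main step is a double count. Substituting~(\ref{eq:explicit}) and using that $\weightel{x}{\unord{M}\cup\{w\}} = 0$ unless $\unord{M}\cup\{w\}$ is a $k$-edge, I would rewrite
\[
\prob{W_i = v} \;=\; \frac{1}{Z}\sum_{\substack{\ordered{M}\in\ordsubs{V}{k-1}\\ M_{k-1}=v}} \sum_{e\supseteq \unord{M}} \weightel{x}{e} \;=\; \frac{1}{Z}\sum_{e\in E(G)} \weightel{x}{e}\cdot N_v(e),
\]
where $N_v(e)$ counts the ordered $(k-1)$-tuples $\ordered{M}$ with last entry $v$ whose underlying set lies in $e$, and $Z$ is the denominator from~(\ref{eq:explicit}). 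For any $e \ni v$ there are $k-1$ choices of a $(k-1)$-subset of $e$ containing $v$, each admitting $(k-2)!$ orderings with $v$ fixed in the final slot, giving $N_v(e)=(k-1)!$; and $N_v(e)=0$ if $v\notin e$. An analogous count with $v$ unconstrained yields $Z = k!\sum_{e\in E(G)}\weightel{x}{e}$. Combining these and using the identity $k\sum_e\weightel{x}{e} = \sum_{u\in V}\sum_{e\ni u}\weightel{x}{e}$, I obtain
\[
\prob{W_i = v} \;=\; \frac{\sum_{e\ni v}\weightel{x}{e}}{\sum_{u\in V}\sum_{e\ni u}\weightel{x}{e}}.
\]

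The conclusion now follows immediately from the $n^{-2/5}$-almost-perfectness of $\weighting{x}$: the numerator lies in $[1-n^{-2/5},\,1]$ and each inner sum in the denominator lies in the same interval, so the denominator lies in $[n-n^{3/5},\,n]$. Dividing gives $\prob{W_i = v} = \frac{1}{n}\bigl(1 \pm O(n^{-2/5})\bigr)$, which is within $\frac{1}{n}(1\pm n^{-1/3})$ for $n$ sufficiently large. There is no real obstacle in this argument: once $\pi$ has been written out explicitly, the statement reduces to the elementary observation that each vertex attracts essentially unit total $\weighting{x}$-weight. The only thing to be careful about is correctly bookkeeping the combinatorial factors $(k-1)!$ and $k!$ in the two double counts.
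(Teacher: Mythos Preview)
Your proof is correct and reaches the same key identity as the paper, namely
\[
\prob{W_i=v}=\frac{\sum_{e\ni v}\weightel{x}{e}}{\sum_{u\in V}\sum_{e\ni u}\weightel{x}{e}},
\]
after which the $n^{-2/5}$-almost-perfectness finishes the argument in both cases. The only difference is in how this identity is derived: the paper conditions on $\ordscript{W}{i-1}$, uses the transition probability~(\ref{eq:transprobsimple}) together with~(\ref{eq:explicit}), and cancels the numerator of $\pi(\ordered{S})$ against the denominator of the transition probability; you instead observe that $\ordscript{W}{i}$ itself has law $\pi$ and marginalise~(\ref{eq:explicit}) onto the last coordinate via a double count, picking up the factors $(k-1)!$ and $k!$ which then cancel. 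Your route is marginally more direct (it avoids invoking the transition step), while the paper's route makes the cancellation structurally transparent without tracking combinatorial factors; both are short computations from Proposition~\ref{explicitstatdist} and neither offers a real advantage over the other.
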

\begin{proof}
We reformulate~$\cW$ as $\cW=(\ordscript{W}{0}, \ordscript{W}{1},\dots)$, where $\ordscript{W}{i}\coloneqq(W_{i-(k-2)},\dots, W_{i})$.
Let $v\in V$ and $i\in\bN$.
By the law of total probability,~(\ref{eq:transprobsimple}), and Proposition~\ref{explicitstatdist}, we obtain\COMMENT{In the third equality we exchange sums over ordered $(k-1)$-tuples for sums over unordered $(k-1)$-tuples, and a factor of $(k-1)!$ cancels from the numerator and denominator of the fraction.}:
\begin{eqnarray*}
\prob{W_{i}=v} & = & \sum_{\ordered{S}\in\ordsubs{V}{k-1}}\prob{\ordscript{W}{i-1}=\ordered{S}}\prob{W_{i}=v\mid\ordscript{W}{i-1}=\ordered{S}} \\
& = & \sum_{\ordered{S}\in\ordsubs{V}{k-1}}\frac{\sum_{v'\in V}\weightel{x}{\unord{S}\cup\{v'\}}}{\sum_{\ordered{B}\in\ordsubs{V}{k-1}}\sum_{v'\in V}\weightel{x}{\unord{B}\cup\{v'\}}}\cdot\frac{\weightel{x}{\unord{S}\cup\{v\}}}{\sum_{v'\in V}\weightel{x}{\unord{S}\cup\{v'\}}} \\
& = & \frac{\sum_{\unord{S}\in\unordsubs{V}{k-1}}\weightel{x}{\unord{S}\cup\{v\}}}{\sum_{\unord{B}\in\unordsubs{V}{k-1}}\sum_{v'\in V}\weightel{x}{\unord{B}\cup\{v'\}}}
= \frac{\sum_{\unord{S}\in\unordsubs{V}{k-1}}\weightel{x}{\unord{S}\cup\{v\}}}{\sum_{v'\in V}\sum_{\unord{B}\in\unordsubs{V}{k-1}}\weightel{x}{\unord{B}\cup\{v'\}}} \\
& = & \frac{\sum_{e\ni v}\weightel{x}{e}}{\sum_{v'\in V}\sum_{e\ni v'}\weightel{x}{e}}.
\end{eqnarray*}
Applying $1-n^{-2/5}\leq \sum_{e\ni v}\weightel{x}{e}\leq1$ for all $v\in V$, we obtain\COMMENT{Lower bound: $\sum_{e\ni v}\weightel{x}{e}/\sum_{v'\in V}\sum_{e\ni v'}\weightel{x}{e} \geq \frac{1-n^{-2/5}}{n}>(1-n^{-1/3})\cdot1/n$.\newline Upper bound: $\sum_{e\ni v}\weightel{x}{e}/\sum_{v'\in V}\sum_{e\ni v'}\weightel{x}{e} \leq \frac{1}{(1-n^{-2/5})n}$. Note that $\frac{1}{1-n^{-2/5}}=\frac{1-n^{-2/5}+n^{-2/5}}{1-n^{-2/5}}=1+\frac{n^{-2/5}}{1-n^{-2/5}}\leq1+2n^{-2/5}<1+n^{-1/3}$.}~(\ref{eq:nearlyuniform}).
\end{proof}
We now show  the crucial fact that, for any large set $U\subseteq V$, the probability that the self-avoiding $\weighting{x}$-walk~$\cX$ is in~$U$ after a small number of steps is roughly~$|U|/n$.
We will apply this fact to neighbourhoods of $(k-1)$-tuples in the proof that~$\cX$ is likely to be good.
This in turn will be used to show that~$\cX$ will, in expectation, behave roughly uniformly with respect to the codegrees of all $(k-1)$-tuples.
\begin{lemma}[Uniformity Lemma]\label{uniformity}
Let $1/n\ll 1/C\ll\gamma, 1/k$, where $k\geq2$, and let~$G$ be an $(n,k,\gamma)$-graph.
Let $\weighting{x}:E(G)\rightarrow\bR^{+}$ be a $C$-normal, $n^{-2/5}$-almost-perfect fractional matching, let $\ordered{S}\in\ordsubs{V}{k-1}$, and let $\cX=(X_{-(k-2)}, X_{-(k-3)},\dots)$ be the self-avoiding $\weighting{x}$-walk on~$G$ with starting tuple~$\ordered{S}$.
Then for any $q\in[(\ln n)^{2}, n^{1/5}]$ and any $U\subseteq V$ of size $|U|\geq n^{3/4}$, we have
\begin{equation}\label{eq:niceuniformity}
\prob{X_{q}\in U} =(1\pm n^{-3/10})\frac{|U|}{n}.
\end{equation}
\end{lemma}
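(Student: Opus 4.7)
The plan is to execute the three-step strategy outlined in Section~\ref{propersketch}: after a short burn-in the walk essentially mixes, and once mixed every vertex is nearly equally likely. Concretely, introduce an intermediate parameter $p=\Theta(\ln n)$, small enough that $p \leq (\ln n)^2 \leq q$, and condition on the prefix $\cX(q-p)$. Given this history, the continuation $(X_{q-p+1},\dots,X_q)$ is precisely the self-avoiding $\weighting{x}$-walk on the residual graph $G_{q-p}$ started at $\ordscript{X}{q-p}$. Since at most $n^{1/5}$ vertices have been deleted, $G_{q-p}$ remains $(\gamma/2)$-Dirac, $\weighting{x}$ restricted to $E(G_{q-p})$ is still $2C$-normal, and by bounding the total weight of lost edges through any fixed vertex by $(q-p)\binom{n}{k-2}\cdot C/n^{k-1}=O(n^{-4/5})$, this restriction is $O(n^{-4/5})$-almost-perfect, well within the budget of Lemma~\ref{welltravelled}.

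Now run two further walks on $G_{q-p}$ from $\ordscript{X}{q-p}$: the simple $\weighting{x}$-walk $\cY$ and the stationary $\weighting{x}$-walk $\cW$. Normality together with the codegree lower bound on $G_{q-p}$ give upper-balance $O(1/n)$ and lower-balance $\Omega(1/n)$, with the implicit constants depending only on $C$ and $\gamma$. Lemma~\ref{walkalike} then yields $d_{TV}(X_q,Y_p)=O(p^2/n)=O((\ln n)^2/n)$ (where $X_q$ is viewed through the conditioning), and Lemma~\ref{mixing} yields $d_{TV}(Y_p,W_p)\leq\eta$ with $\eta\coloneqq n^{-2/3}$, provided $p$ is a sufficiently large multiple of $\ln n$. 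Finally, Lemma~\ref{welltravelled} gives $\prob{W_p=v}=(1\pm n^{-1/3})/|V_{q-p}|$ for every $v\in V_{q-p}$. Summing over $v\in U\cap V_{q-p}$, and using $|U\setminus V_{q-p}|\leq n^{1/5}$ and $|V_{q-p}|=n-O(n^{1/5})$, we obtain $\prob{W_p\in U\cap V_{q-p}}=(1\pm O(n^{-1/3}))|U|/n$. Adding in the two total-variation errors then gives $\prob{X_q\in U\mid \cX(q-p)}=(1\pm n^{-3/10})|U|/n$, uniformly in the history, so averaging proves the lemma.

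The delicate point is the bookkeeping of errors: the walkalike term $O((\ln n)^2/n)$, the mixing term $\eta$, and the loss from replacing $(|U\cap V_{q-p}|,|V_{q-p}|)$ by $(|U|,n)$ must all be absorbed into the multiplicative slack $n^{-3/10}|U|/n$. The hypothesis $|U|\geq n^{3/4}$ gives $|U|/n\geq n^{-1/4}$, hence an absolute error budget of at least $n^{-11/20}$, which is precisely what is needed to swallow both the $O(n^{1/5})$ vertices already deleted and the additive mixing and walkalike errors. This is why the ranges $q\leq n^{1/5}$ and $|U|\geq n^{3/4}$ appear in the hypothesis: tightening either would break the error budget, while loosening the lower bound on $q$ would leave $p$ no room to be both long enough to mix and short enough for the self-avoiding and simple walks to remain coupled.
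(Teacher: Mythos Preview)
Your argument is correct, but it takes an unnecessary detour. The paper's proof is more direct: it never conditions on a prefix or passes to a residual graph. Instead it observes that $q$ itself already sits in the sweet spot---small enough that Lemma~\ref{walkalike} applied on $G$ over all $q$ steps gives $d_{TV}(X_q,Y_q)\le 2C^2q^2/n\le 2C^2n^{-3/5}$, and large enough (since $q\ge(\ln n)^2$) that Lemma~\ref{mixing} applied on $G$ gives $d_{TV}(Y_q,W_q)<1/n^2$ with $\eta=1/n^2$. Then Lemma~\ref{welltravelled} is applied directly to the stationary walk on $G$, and the three errors are summed via the triangle inequality. No intermediate parameter $p$, no residual graph, no verification that the restricted weighting remains almost-perfect.

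Your version introduces the split $q=(q-p)+p$ and works on $G_{q-p}$, which forces you to check that $G_{q-p}$ is still $(\gamma/2)$-Dirac, that $\weighting{x}|_{G_{q-p}}$ is still normal and almost-perfect, and to track the discrepancy between $U$ and $U\cap V_{q-p}$. All of this is fine, but none of it is needed here: the conditioning-on-a-prefix manoeuvre is exactly what the paper uses one level up, in the proof of Lemma~\ref{goodwalk}, where one genuinely must apply the Uniformity Lemma to each residual $G_j$. You have effectively folded that later step into the proof of the Uniformity Lemma itself. This does no harm, but it obscures the fact that the hypothesis $q\in[(\ln n)^2,n^{1/5}]$ is precisely the window in which both coupling lemmas apply simultaneously on the original graph.
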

\begin{proof}
We first argue that~$\weighting{x}$ is $(1/C^{2}n, 2C^{2}/n)$-balanced.
Indeed, since~$\weighting{x}$ is $C$-normal and~$G$ is $\gamma$-Dirac, we have
\[
\frac{\max_{e\in E(G)}\weightel{x}{e}}{\min_{\unord{S}\in\unordsubs{V}{k-1}}\sum_{v\in V}\weightel{x}{\unord{S}\cup\{v\}}}\leq\frac{C}{n^{k-1}}\cdot\frac{Cn^{k-1}}{(1/2+\gamma)n}\leq \frac{2C^{2}}{n}.
\]
The lower bound follows similarly\COMMENT{$\frac{\min_{e\in E(G)}\weightel{x}{e}}{\max_{\unord{S}\in\unordsubs{V}{k-1}}\sum_{v\in V}\weightel{x}{\unord{S}\cup\{v\}}}\geq \frac{1}{Cn^{k-1}}\cdot\frac{n^{k-1}}{Cn} =1/C^{2}n$.}.
Now let $U\subseteq V$ be of size $|U|\geq n^{3/4}$, and fix $q\in[(\ln n)^{2}, n^{1/5}]$.
Let $\cY=(Y_{-(k-2)}, Y_{-(k-3)},\dots)$ be the simple $\weighting{x}$-walk on~$G$ with starting tuple~$\ordered{S}$ and let $\cW=(W_{-(k-2)}, W_{-(k-3)}, \dots)$ be the stationary $\weighting{x}$-walk on~$G$.
We will show that~$X_{q}$ is distributed similarly to~$Y_{q}$ since~$q$ is not too large, and that~$Y_{q}$ is distributed similarly to~$W_{q}$ since~$q$ is large enough, and finally we will use Lemma~\ref{welltravelled} to show that~$\prob{W_{q}\in U}$ is roughly~$|U|/n$.

Setting $c_{1}\coloneqq |\prob{X_{q}\in U}-\prob{Y_{q}\in U}|$ and applying Lemma~\ref{walkalike} (with~$2C^{2}/n$ playing the role of~$r$), we obtain
\begin{eqnarray*}
c_{1} & = & \left|\sum_{v\in U}(\prob{X_{q}=v}-\prob{Y_{q}=v})\right| \leq\sum_{v\in V}|\prob{X_{q}=v}-\prob{Y_{q}=v}| \stackrel{(\ref{eq:variation2})}{=}2d_{TV}(X_{q}, Y_{q}) \\ & \leq & 4C^{2}q^{2}/n <\frac{1}{3}n^{-13/10}|U|.
\end{eqnarray*}
Consider the term $c_{2}\coloneqq |\prob{Y_{q}\in U}-\prob{W_{q}\in U}|$.
We apply Lemma~\ref{mixing} (with~$1/C^{2}$,~$1/n^{2}$ playing the roles of~$\tau$,~$\eta$ respectively, and using $q\geq (\ln n)^{2}=((\ln n)/2)\cdot\ln(1/\eta)$) to obtain
\[
c_{2}\leq\sum_{v\in V}|\prob{Y_{q}=v}-\prob{W_{q}=v}| \stackrel{(\ref{eq:variation2})}{=}2d_{TV}(Y_{q}, W_{q}) < 2n^{-2}<\frac{1}{3}n^{-13/10}|U|.
\]
We now apply Lemma~\ref{welltravelled} to the term $c_{3}\coloneqq |\prob{W_{q}\in U}-|U|/n|$ to obtain
\[
c_{3} =\left|\sum_{v\in U}\left(\prob{W_{q}=v}-\frac{1}{n}\right)\right| \leq \sum_{v\in U}\left|\prob{W_{q}=v}-\frac{1}{n}\right| \leq n^{-4/3}|U|.
\]
Finally, by the triangle inequality we have $|\prob{X_{q}\in U}-|U|/n|\leq c_{1}+c_{2}+c_{3}<n^{-13/10}|U|$, which is equivalent to~(\ref{eq:niceuniformity}).
\end{proof}
\subsection{The walk is likely to be good}\label{WALKGOOD}
The aim of this section is to prove the following lemma, which states that under our assumptions, a self-avoiding $\weighting{x}$-walk of length~$\sqrt{n}$ is good with high probability.
\begin{lemma}\label{goodwalk}
Let $1/n\ll 1/C\ll \gamma, 1/k$, where $k\geq 2$, and let~$G$ be an $(n,k,\gamma)$-graph.
Let $\weighting{x}\colon E(G)\rightarrow\bR^{+}$ be a $C$-normal perfect fractional matching, let $\ordered{S}\in\ordsubs{V}{k-1}$,
let $\kappa\coloneqq\sqrt{n}$, and
let $\cX=(X_{-(k-2)},\dots, X_{\kappa})$ be a self-avoiding $\weighting{x}$-walk on~$G$ with starting tuple~$\ordered{S}$.
Then~$\prob{\cX\hspace{1mm}\text{is}\hspace{1mm}\text{good}}\geq 1-1/n$.
\end{lemma}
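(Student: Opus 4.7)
My plan is to apply the Azuma-type concentration of Lemma~\ref{azuma} together with the Uniformity Lemma (Lemma~\ref{uniformity}), then union bound over the tracking functions. Since $|\cF| \leq n^{k-1}$, it suffices to show that for each fixed $\unord{S} \in \unordsubs{V}{k-1}$, the good-walk inequality for $g \coloneqq g_{\unord{S}}$ fails with probability at most $n^{-k}$.

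Fix $\unord{S}$ and set $q \coloneqq (\ln n)^{2}$, $p \coloneqq \kappa - q$. The contribution from the first $q + k - 2$ terms of $\sum_{i} g(X_{i})$ is at most $q + k - 2 = O((\ln n)^{2}) \ll n^{3/10}$, so it suffices to control $\sum_{i=q}^{\kappa} g(X_{i})$. The heart of the argument is to show, for every $i \in \{q, \ldots, \kappa\}$, that
\[
\expn{g(X_{i}) \mid \cX(i-q)} = (1 \pm 2n^{-3/10}) \cdot g(V)/n.
\]
To prove this, I observe that conditional on $\cX(i-q)$, the subsequent evolution $(X_{i-q+1}, X_{i-q+2}, \ldots)$ is a self-avoiding $\weighting{x}'$-walk on the residual graph $G' \coloneqq G[V_{i-q+1}]$ with starting tuple $\ordscript{X}{i-q}$, where $\weighting{x}'$ is the restriction of $\weighting{x}$ to $E(G')$. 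Since at most $\sqrt{n}$ vertices have been deleted, $G'$ is an $(n', k, \gamma/2)$-graph with $n' = n - O(\sqrt{n})$, and $\weighting{x}'$ remains $2C$-normal. Moreover, $C$-normality implies that removing any single vertex $v$ perturbs $\sum_{e \ni u} \weightel{x}{e}$ by at most $\binom{n-2}{k-2} \cdot C/n^{k-1} \leq C/n$, so the total perturbation is at most $\sqrt{n} \cdot C/n \leq n^{-2/5}$; hence $\weighting{x}'$ is $n^{-2/5}$-almost-perfect. Applying Lemma~\ref{uniformity} with target set $N_{G}(\unord{S}) \cap V_{i-q+1}$ (whose size is at least $(1/2 + \gamma)n - \sqrt{n} \geq {n'}^{3/4}$) yields the displayed equation, after absorbing the discrepancy between $g(V)/n$ and $|N_{G}(\unord{S}) \cap V_{i-q+1}|/n'$ into the $\pm 2n^{-3/10}$ error.

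With the conditional expectations controlled, I apply Lemma~\ref{azuma} with the above $p$, $q$, and $t \coloneqq 10\sqrt{k \ln n}$, obtaining a failure probability of at most $2qe^{-t^{2}/2} \ll n^{-k-1}$ and concentration error $t\sqrt{pq} = O(n^{1/4}(\ln n)^{3/2})$. The approximation error inherited from the conditional expectations is at most $(p+1) \cdot 2n^{-3/10} \cdot g(V)/n = O(n^{1/5})$. Both contributions are $o(n^{3/10})$, so off the Azuma failure event we obtain $\sum_{i=q}^{\kappa} g(X_{i}) = \kappa g(V)/n \pm n^{3/10}/2$, and a union bound over $\unord{S}$ gives $\prob{\cX \text{ is not good}} \leq n^{-1}$.

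The main obstacle I foresee is the technical bookkeeping in the conditional expectation step: one must verify that, irrespective of the conditioning $\cX(i-q)$, the residual graph $G'$ and restricted weighting $\weighting{x}'$ satisfy the hypotheses of the Uniformity Lemma with parameters controlled uniformly in $i$. This is a routine but careful calculation based on $C$-normality; once it is in place, the remainder of the argument reduces to a standard combination of mixing-based estimates and martingale concentration.
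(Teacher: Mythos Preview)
Your proposal is correct and follows essentially the same approach as the paper: fix a tracking function, set $q=(\ln n)^2$, use Proposition~\ref{deterministic}-type observations to verify that the residual graph and restricted weighting satisfy the hypotheses of the Uniformity Lemma, apply Lemma~\ref{uniformity} to control $\expn{g(X_{j+q})\mid\cX(j)}$, and then invoke Lemma~\ref{azuma} for concentration before union-bounding over~$\cF$. The only cosmetic differences are that the paper splits the conditional-expectation error into two pieces ($|\expn{\cdot}-g(V_j)/(n-j)|$ and $|g(V_j)/(n-j)-g(V)/n|$) and uses $t=\log n$ in Lemma~\ref{azuma}, whereas you absorb both pieces into a single $(1\pm 2n^{-3/10})g(V)/n$ estimate and take $t=10\sqrt{k\ln n}$; both choices work.
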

To prove Lemma~\ref{goodwalk}, we will need some results on the behaviour of the residual graphs~$G_{j}$ as the walk~$\cX$ progresses, so that we can apply Lemma~\ref{uniformity} to each~$G_{j}$.
We define~$\weighting{x}|_{G_{j}}$ to be the restriction of~$\weighting{x}$ to~$E(G_{j})$, so that $\weighting{x}|_{G_{j}}\colon E(G_{j})\rightarrow\bR^{+}$ is a (not necessarily perfect) fractional matching of~$G_{j}$.
\begin{prop}\label{deterministic}
Suppose the assumptions of Lemma~\ref{goodwalk} hold.
Let~$\cF$ be the set of tracking functions of~$G$.
Then for any~$g\in\cF$ and any $j\in\{0,\dots,\kappa\}$, the following conditions hold deterministically:
\begin{enumerate}[label=\upshape(\roman*)]
    \item $g(V_{j})=\left(1\pm n^{-1/4}\right)\frac{n-j}{n}g(V)$;
    \item $G_{j}$ is $\gamma/2$-Dirac;
    \item $\weighting{x}|_{G_{j}}$ is $2C$-normal;
    \item $\weighting{x}|_{G_{j}}$ is $n^{-2/5}$-almost-perfect.
\end{enumerate}
\end{prop}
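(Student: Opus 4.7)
The plan is to exploit the fact that for every $j \in \{0,\dots,\kappa\}$ we have removed only $|V \setminus V_{j}| = j \le \sqrt{n}$ vertices from $G$, which is a tiny perturbation relative to $n$. Each of the four conditions should then follow from a short deterministic estimate that holds along any realisation of $\cX$, using only the hierarchy $1/n \ll 1/C \ll \gamma, 1/k$; no probabilistic input is required.

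For (ii) I would simply write $\delta(G_{j}) \ge \delta(G) - |V \setminus V_{j}| \ge (1/2+\gamma)n - \sqrt{n}$ and compare this to $(1/2 + \gamma/2)(n-j) \le (1/2 + \gamma/2)n$; the resulting slack $\gamma n/2 - O(\sqrt{n})$ is positive for large $n$. For (iii), since $\weighting{x}|_{G_{j}}$ assigns the same weight to each surviving edge, I only need to check that the $C$-normal bounds on an $n$-vertex graph imply the $2C$-normal bounds on an $(n-j)$-vertex graph, which reduces to $(n-j)^{k-1} \ge n^{k-1}/2$; this is immediate from $j \le \sqrt{n}$. For (iv), I would bound the weight lost at a vertex $v \in V_{j}$ by observing that every edge at $v$ which fails to lie in $G_{j}$ must contain at least one of the $j$ vertices of $V \setminus V_{j}$. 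There are at most $j\binom{n-2}{k-2} \le \sqrt{n}\cdot n^{k-2}$ such edges, each of weight at most $C/n^{k-1}$ by $C$-normality of $\weighting{x}$, so the total lost weight is at most $C/\sqrt{n} \le n^{-2/5}$. Since $\weighting{x}$ is a perfect fractional matching of $G$, this gives $\sum_{e \ni v,\, e \in E(G_{j})} \weighting{x}|_{G_{j}}(e) \ge 1 - n^{-2/5}$, as required.

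Finally, for (i) I would use that the walk removes at most $j$ vertices of $N_{G}(\unord{S})$, so $g_{\unord{S}}(V_{j}) \in [g_{\unord{S}}(V) - j,\; g_{\unord{S}}(V)]$. The triangle inequality then yields
\[
\left|\, g_{\unord{S}}(V_{j}) - \tfrac{n-j}{n}\, g_{\unord{S}}(V) \,\right| \;\le\; j + \tfrac{j}{n}\, g_{\unord{S}}(V) \;\le\; 2\sqrt{n},
\]
whereas the allowed error $n^{-1/4} \cdot \tfrac{n-j}{n}\, g_{\unord{S}}(V) \ge n^{3/4}/3$ (using $g_{\unord{S}}(V) = d_{G}(\unord{S}) \ge n/2$) comfortably dominates $2\sqrt{n}$. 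None of the four items poses a real obstacle; the only place where the specific structure of $\weighting{x}$ is used in an essential way is (iv), where $C$-normality is needed to convert the trivial edge count into a genuine weight bound. Everything else is essentially just counting and the very short lengthscale $\kappa = \sqrt{n}$.
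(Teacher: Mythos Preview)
Your proposal is correct and follows essentially the same approach as the paper: all four items are verified by elementary deterministic estimates exploiting $j\le\sqrt{n}$, with (iii) reducing to $(n-j)^{k-1}\ge n^{k-1}/2$ and (iv) bounding the lost weight at a vertex by $j\binom{n-2}{k-2}\cdot C/n^{k-1}$. The only cosmetic difference is that the paper derives (ii) as a consequence of (i), whereas you argue (ii) directly from $\delta(G_j)\ge\delta(G)-j$; both routes are immediate.
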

\begin{proof}
It suffices to prove that conditions (i)--(iv) hold for any $j\in\{0,\dots,\kappa\}$ and any outcome $x(j)=(x_{-(k-2)},\dots, x_{j})$ of~$\cX(j)$.
Throughout the proof, we let $j\in\{0,\dots,\kappa\}$ be fixed, and we let~$x(j)$ be a fixed outcome of~$\cX(j)$, thus determining~$V_{j}$ and~$G_{j}$.

\noindent (i): Fix~$g\in\cF$.
It is clear that $g(V)-\kappa\leq g(V_{j})\leq g(V)$.
Relaxing the upper bound and recalling that $g(V)\geq n/2$, we obtain $g(V_{j})=(1\pm 2\kappa/n)g(V)$.
Note that\COMMENT{$2n^{-1/2}<n^{-1/4}(1-\kappa/n)-\kappa/n \Leftrightarrow 2< n^{1/4}(1-\kappa/n)-1$.}
$2\kappa/n=2n^{-1/2}<n^{-1/4}(1-\kappa/n)-\kappa/n\leq n^{-1/4}(1-j/n)-j/n$, which implies~(i).\COMMENT{$g(V_{j})<(1+2\kappa/n)g(V)$ implies $g(V_{j})<\left(1+n^{-1/4}(1-j/n)-j/n\right)g(V)=\left(1+n^{-1/4}\right)\frac{n-j}{n}g(V)$.
The lower bound proceeds similarly, as $g(V_{j})>(1-2\kappa/n)g(V)$ implies
\begin{eqnarray*}
g(V_{j}) & > & (1-n^{-1/4}(1-j/n)+j/n)g(V) \\ & > & (1-n^{-1/4}(1-j/n)-j/n)g(V) =(1-n^{-1/4})\frac{n-j}{n}g(V).
\end{eqnarray*}.}

\noindent (ii): Let~$\unord{M}\in\unordsubs{V_{j}}{k-1}$.
By~(i), we have
\begin{eqnarray}\label{eq:dgj}
d_{G_{j}}(\unord{M}) & = & g_{\unord{M}}(V_{j}) \geq \left(1-n^{-1/4}\right)\frac{n-j}{n}g_{\unord{M}}(V) \geq\left(1-n^{-1/4}\right)\left(\frac{1}{2}+\gamma\right)(n-j) \nonumber \\ & \geq & \left(\frac{1}{2}+\frac{\gamma}{2}\right)|V_{j}|.
\end{eqnarray}
Since~(\ref{eq:dgj}) holds for all~$\unord{M}\in\unordsubs{V_{j}}{k-1}$, we conclude that~$G_{j}$ is~$\gamma/2$-Dirac.

\noindent The calculations for (iii) and (iv) are straightforward.\COMMENT{\noindent (iii): Fix $e\in E(G_{j})$.
Since~$\weighting{x}$ is $C$-normal, we have
\[
\weighting{x}|_{G_{j}}(e)\geq \frac{1}{Cn^{k-1}} \geq \frac{1}{2C(n-j)^{k-1}}.
\]
Upper bound: $\weighting{x}|_{G_{j}}(e)\leq\frac{C}{n^{k-1}}\leq\frac{2C}{(n-j)^{k-1}}$. We turn to (iv): Fix $v\in V_{j}$.
Note that there are at most~$\kappa\binom{n-2}{k-2}$ edges in $E(G)\setminus E(G_{j})$ containing~$v$ and some vertex $u\in V\setminus V_{j}$.
Thus,
\[
\sum_{\substack{e\ni v \\ e\in E(G_{j})}}\weighting{x}|_{G_{j}}(e) \stackrel{(\ref{eq:doublebound})}{\geq} 1-\kappa\binom{n-2}{k-2}\frac{C}{n^{k-1}}\geq 1-\frac{C\kappa}{(k-2)!n}\geq1-n^{-2/5}.
\]
This completes the proof.}
\end{proof}
For any $j\in\{0,\dots,\kappa\}$ and any fixed outcome $x(j)=(x_{-(k-2)},\dots, x_{j})$ of~$\cX(j)$, we write~$\pr_{x(j)}$ for the probability measure in the conditional probability space where we have fixed $\cX(j)=x(j)$, so that $\probxj{\,\cdot\,}=\prob{\,\cdot\mid\cX(j)=x(j)}$.
We are now ready to prove Lemma~\ref{goodwalk}.
\lateproof{Lemma~\ref{goodwalk}}
We need to prove that, with probability at least $1-1/n$,
\begin{equation}\label{eq:errorg}
\text{Error}_{g}(\cX)\coloneqq\left|\left(\sum_{j=-(k-2)}^{\kappa}g(X_{j})\right)-\frac{\kappa}{n}g(V)\right|<n^{3/10}
\end{equation}
holds simultaneously for every~$g\in\cF$.
It will suffice to prove that~(\ref{eq:errorg}) holds for any fixed $g\in\cF$ with probability at least $1-1/n^{k}$, say,
since $|\cF|\leq n^{k-1}$.
Fix $g\in\cF$ and set $q\coloneqq (\ln n)^{2}$.
By breaking up~$\text{Error}_{g}(\cX)$ and repeatedly applying the triangle inequality, we obtain\COMMENT{$g(V)/n\leq 1$ (and recall that~$g$ is an indicator function) so $|g(X_{j})-g(V)/n|\leq 1.$}:
\begin{eqnarray*}
\text{Error}_{g}(\cX) & = & \left|\sum_{j=-(k-2)}^{0}g(X_{j})+\sum_{j=1}^{q-1}\left(g(X_{j})-\frac{g(V)}{n}\right)+\sum_{j=q}^{\kappa}\left(g(X_{j})-\frac{g(V)}{n}\right)\right| \\ & \leq & 2q+\left|\sum_{j=0}^{\kappa-q}\left(g(X_{j+q})-\frac{g(V)}{n}\right)\right| \\ & \leq & 2q +\left|\sum_{j=0}^{\kappa-q}(g(X_{j+q})-\expn{g(X_{j+q})\mid\cX(j)})\right| \\ & + & \sum_{j=0}^{\kappa-q}\left|\expn{g(X_{j+q})\mid\cX(j)}-\frac{g(V_{j})}{n-j}\right| +\sum_{j=0}^{\kappa-q}\left|\frac{g(V_{j})}{n-j}-\frac{g(V)}{n}\right|.
\end{eqnarray*}
We now prove an upper bound for each of the three sums in the final expression above.
To this end, fix $j\in\{0,\dots, \kappa-q\}$, fix an outcome~$x(j)=(x_{-(k-2)},\dots, x_{j})$ of~$\cX(j)$, and let $\unord{T}\in\unordsubs{V}{k-1}$ be such that $g=g_{\unord{T}}$.
We apply Proposition~\ref{deterministic} to deduce that~$G_{j}$ is $\gamma/2$-Dirac, and that $\weighting{x}|_{G_{j}}$ is $2C$-normal and $n^{-2/5}$-almost-perfect.
We can now apply Lemma~\ref{uniformity} to~$G_{j}$\COMMENT{(with $n-j, 2C, (x_{j-(k-2)},\dots, x_{j})$ playing the roles of~$n, C, \ordered{S}$ respectively, using $(\ln(n-j))^{2}\leq(\ln n)^{2}\leq (n-j)^{1/5}$ and $n^{-2/5}\leq (n-j)^{-2/5}$, and noting $|N_{G_{j}}(\unord{T})|>n^{3/4}$ by Proposition~\ref{deterministic}(i))} to deduce that
\begin{eqnarray*}
\left|\expn{g(X_{j+q})\mid\cX(j)=x(j)}-\frac{g(V_{j})}{n-j}\right| & = & \left|\probxj{X_{j+q}\in N_{G_{j}}(\unord{T})}-\frac{|N_{G_{j}}(\unord{T})|}{n-j}\right| \\ & < & (n-j)^{-13/10}|N_{G_{j}}(\unord{T})| < n^{-5/4}g(V).
\end{eqnarray*}
We deduce\COMMENT{Note that clearly $|\expn{g(X_{j+q})\mid\cX(j)}-g(V_{j})/(n-j)|$ is a random variable inheriting randomness from~$\cX(j)$, and outputting a real number.
If the output of this random variable is at most $n^{-5/4}g(V)$ for all outcomes~$x(j)$ of~$\cX(j)$, then the random variable is deterministically bounded above by $n^{-5/4}g(V)$.} that $|\expn{g(X_{j+q})\mid\cX(j)}-g(V_{j})/(n-j)|<n^{-5/4}g(V)$ for each $j\in\{0,\dots, \kappa-q\}$.
Next, we apply Proposition~\ref{deterministic}(i) to obtain that, for each $j\in\{0,\dots, \kappa-q\}$, we have $|g(V_{j})/(n-j)-g(V)/n|\leq n^{-5/4}g(V)$.
Finally, applying Lemma~\ref{azuma} with~$\log n$ playing the role of~$t$, and using $||g||_{\infty}=1$ (there are no isolated $(k-1)$-tuples), we see that with probability at least $1-2q\exp(-(\log n)^{2}/2)$, we have
\[
\left|\sum_{j=0}^{\kappa-q}(g(X_{j+q})-\expn{g(X_{j+q})\mid\cX(j)})\right|\leq\log n\sqrt{q(\kappa-q)},
\]
so that altogether, with probability at least $1-1/n^{k}$, we have\COMMENT{Clearly $2(\kappa-q+1)n^{-5/4}g(V) < 2\kappa n^{-5/4}g(V)=2n^{-3/4}g(V)\leq 2n^{1/4}<\frac{1}{3}n^{3/10}$.
Next, clearly $2q =2(\ln n)^{2}<\frac{1}{3}n^{3/10}$.
Finally, $\log n\sqrt{q(\kappa-q)}\leq\log n\sqrt{q\kappa}=\log n \ln n n^{1/4}<\frac{1}{3}n^{3/10}$.}
\[
\text{Error}_{g}(\cX) \leq 2q + \log n\sqrt{q(\kappa-q)} +2(\kappa-q+1)n^{-5/4}g(V) < n^{3/10},
\]
completing the proof of the lemma.
\endproof
We now have all the tools we need to prove Lemma~\ref{iteration}.
\lateproof{Lemma~\ref{iteration}}
Choose a new constant~$C$ satisfying $1/n\ll c\ll1/C\ll\gamma,1/k$, and let~$\weighting{x}\colon E(G)\rightarrow\bR^{+}$ be a $C$-normal perfect fractional matching (such an~$\weighting{x}$ exists by Lemma~\ref{normality}).
Write $\kappa\coloneqq\sqrt{n}$, and let $\cX=(X_{-(k-2)},\dots,X_{\kappa})$ be the self-avoiding $\weighting{x}$-walk on~$G$ with starting tuple~$\ordered{S}$.
It is clear from the definition of a self-avoiding $\weighting{x}$-walk that any outcome of~$\cX$ corresponds to a $\kappa$-path in~$G$ with~$\ordered{S}$ as one end (note that the walk does not stop before time~$\kappa$, since all codegrees are large enough).
We argue now that good outcomes of~$\cX$ also satisfy condition (iii),
where we say an outcome~$X=(X_{-(k-2)}, \dots, X_{\kappa})$ of~$\cX$ is a \textit{good outcome} if~$X$ satisfies~(\ref{eq:gooddef}).
Let~$P$ be a tight path in~$G$ corresponding to a good outcome~$X$ of~$\cX$, let~$\ordered{T}$ be the non-$\ordered{S}$ end of~$P$, and let~$G_{\kappa}$ denote the residual graph of~$G$ at time~$\kappa$ of~$X$.
Thus $G_{\kappa}=G-(V(P)\setminus\unord{T})$ and $|V_{\kappa}|=n-\kappa$.
Let $\unord{M}\in\unordsubs{V_{\kappa}}{k-1}$ and let $g_{\unord{M}}\in\cF$ be the tracking function of~$G$ corresponding to~$\unord{M}$.
Since~$X$ is good, we obtain:\COMMENT{Note $\frac{n^{3/10}}{|V_{\kappa}|}=\frac{n^{3/10}}{n-\sqrt{n}}\leq 2n^{-7/10}\leq n^{-2/3}$.}
\begin{eqnarray}\label{eq:penult}
d_{G_{\kappa}}(\unord{M}) & = & g_{\unord{M}}(V_{\kappa})= g_{\unord{M}}(V)-\sum_{j=-(k-2)}^{\kappa}g_{\unord{M}}(X_{j}) +\sum_{j=\kappa-(k-2)}^{\kappa}g_{\unord{M}}(X_{j}) \nonumber \\ & \stackrel{(\ref{eq:gooddef})}{\geq} & \frac{n-\kappa}{n}g_{\unord{M}}(V)-n^{3/10}\geq\left(\frac{1}{2}+\gamma-n^{-2/3}\right)|V_{\kappa}|.
\end{eqnarray}
Since~(\ref{eq:penult}) holds for all~$\unord{M}\in\unordsubs{V_{\kappa}}{k-1}$, we conclude that $\delta(G_{\kappa})\geq(1/2+\gamma-n^{-2/3})|V_{\kappa}|$.

Lastly then, it suffices to count the number of good outcomes of~$\cX$.
We begin by finding an upper bound for the probability that~$\cX$ yields any particular fixed tight path.
For any $j\in\{0,\dots,\kappa\}$, we have by Proposition~\ref{deterministic}(ii)--(iii) that~$G_{j}$ is $\gamma/2$-Dirac and~$\weighting{x}|_{G_{j}}$ is $2C$-normal.
It follows\COMMENT{$\frac{\max_{e\in E(G_{j})}\weighting{x}|_{G_{j}}(e)}{\min_{\unord{S}\in\unordsubs{V_{j}}{k-1}}\sum_{v\in V_{j}}\weighting{x}|_{G_{j}}(\unord{S}\cup\{v\})}\leq \frac{2C}{(n-j)^{k-1}}\cdot\frac{2C(n-j)^{k-1}}{(1/2+\gamma/2)(n-j)}\leq \frac{8C^{2}}{n-j}$.} that~$\weighting{x}|_{G_{j}}$ is $8C^{2}/(n-j)$-upper-balanced.
In particular, setting $p\coloneqq 16C^{2}/n$, we have that all transition probabilities of~$\cX$ are bounded from above by~$p$.
Let $Q=(q_{-(k-2)}, \dots, q_{\kappa})$ be a fixed $\kappa$-path in~$G$ with $\ordered{S}=(q_{-(k-2)},\dots, q_{0})$ as one end.
Then
\[
\prob{\cX=Q}=\prod_{j=1}^{\kappa}\prob{X_{j}=q_{j}\bigg|\bigcap_{i=-(k-2)}^{j-1}\{X_{i}=q_{i}\}}\leq p^{\kappa}.
\]
By Lemma~\ref{goodwalk}, we have that $\prob{\cX \hspace{1mm}\text{is good}}\geq1/2$, so we conclude that the number of good outcomes of~$\cX$ (and thus the number of tight $\kappa$-paths in~$G$ satisfying (ii) and~(iii)) is at least $(1/2)/p^{\kappa}\geq (cn)^{\kappa}$, which completes the proof of the lemma.
\endproof
\section{Counting and absorbing long paths}\label{countinglong}
In this section we show how to iterate Lemma~\ref{iteration} to construct tight paths in~$G$ which use almost all of the vertices of~$G$, and we count the number of choices that can be made in this process to obtain a lower bound for the number of these long paths.
Finally, we prove Theorem~\ref{hamcycs} by showing how these paths can be completed into tight Hamilton cycles of~$G$.
\begin{lemma}\label{algorithmic}
Let $\gamma>0$, let~$k\geq 2$, and let~$G$ be an $(n,k,\gamma)$-graph.
There are at least $\exp(n\ln n-\Theta(n))$ tight paths in~$G$ of length at least $n-n^{7/8}$.
\end{lemma}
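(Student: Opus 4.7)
The plan is to iteratively apply Lemma~\ref{iteration} (the Iteration Lemma) to greedily construct long tight paths in $G$, branching at each step and counting multiplicatively.

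Fix an ordered $(k-1)$-tuple $\ordered{S}_{0}\in\ordsubs{V}{k-1}$ and choose a constant $c>0$ with $1/n \ll c \ll \gamma, 1/k$. Maintain a collection $\cL_{i}$ of tight paths of a common length $L_{i}$, each starting at $\ordered{S}_{0}$, with $\cL_{0}$ consisting of the degenerate path on $\unord{S}_{0}$. At step $i$, for each $P\in\cL_{i}$ with non-$\ordered{S}_{0}$ end $\ordered{T}_{P}$, apply Lemma~\ref{iteration} to $G_{P}\coloneqq G-(V(P)\setminus \unord{T}_{P})$ with starting tuple $\ordered{T}_{P}$ to obtain at least $(cn_{i})^{\sqrt{n_{i}}}$ extensions by $\sqrt{n_{i}}$-paths, where $n_{i}\coloneqq n - L_{i}$ depends only on $L_{i}$. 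Concatenating each extension onto $P$ gives a tight path of length $L_{i}+\sqrt{n_{i}}$ in $G$, and distinct sequences of choices across iterations yield distinct paths (they differ at the first iteration where the choices disagree), so $|\cL_{i+1}|\geq |\cL_{i}|\cdot(cn_{i})^{\sqrt{n_{i}}}$. Halt at the first step $T$ with $n_{T}\leq n^{7/8}$, so that every $P\in\cL_{T}$ has length $L_{T}=n-n_{T}\geq n-n^{7/8}$.

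The key parameter-tracking check is that Lemma~\ref{iteration} remains applicable throughout. After $i$ iterations, the residual graph is an $(n_{i},k,\gamma_{i})$-graph with $\gamma_{i}\geq \gamma - \sum_{j=0}^{i-1}n_{j}^{-2/3}$. Because $n_{j}\geq n^{7/8}$ for all $j<T$, each step removes at least $n^{7/16}$ vertices, so $T\leq n\cdot n^{-7/16}=n^{9/16}$ and hence $\sum_{j}n_{j}^{-2/3}\leq T\cdot n^{-7/12}=O(n^{-1/48})=o(1)$. Therefore $\gamma_{i}\geq \gamma/2$ for all $i\leq T$, and each application of Lemma~\ref{iteration} succeeds with a uniform constant $c=c(\gamma,k)>0$.

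The main technical hurdle is estimating $\prod_{i=0}^{T-1}(cn_{i})^{\sqrt{n_{i}}}$ precisely enough. The discrete recursion $n_{i+1}=n_{i}-\sqrt{n_{i}}$ is well-approximated by the ODE solution $n(t)=(\sqrt{n}-t/2)^{2}$. Substituting $s=\sqrt{n(t)}=\sqrt{n}-t/2$ so that $dt=-2\,ds$, the Riemann sum becomes
\[
\sum_{i=0}^{T-1}\sqrt{n_{i}}\ln(cn_{i}) \;=\; \int_{n^{7/16}}^{\sqrt{n}}2s\bigl(2\ln s + \ln c\bigr)\,ds + o(n) \;=\; n\ln n - \Theta(n),
\]
where we used $\int s\ln s\,ds=\tfrac{s^{2}}{2}\ln s-\tfrac{s^{2}}{4}$ and noted that the $\ln c=-\Theta(1)$ contribution is absorbed into the $\Theta(n)$ error in the exponent. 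Standard estimates (e.g.\ Euler--Maclaurin applied to the monotone integrand together with the trivial bound $|n_{i+1}-n(i+1)|=o(n_{i})$) control the discretisation error, so exponentiating yields $|\cL_{T}|\geq \exp(n\ln n - \Theta(n))$ tight paths of length at least $n-n^{7/8}$, as required.
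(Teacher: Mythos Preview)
Your proof is correct and follows the same overall strategy as the paper: iterate Lemma~\ref{iteration}, track that $\gamma_i\geq\gamma/2$ throughout (your bound $\sum_j n_j^{-2/3}=O(n^{-1/48})$ matches the paper's exactly), and multiply the counts.

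The only real difference is in the final counting estimate. You approximate $\sum_i\sqrt{n_i}\ln(cn_i)$ by an integral via the ODE $n'=-\sqrt{n}$, which works but is a little heavy, and your discretisation-error justification (``Euler--Maclaurin'' and $|n_{i+1}-n(i+1)|=o(n_i)$) is vaguer than it needs to be. A cleaner route to the same bound is to rewrite the sum directly as a Riemann sum, $\sum_i(n_i-n_{i+1})\ln(cn_i)$, for $\int_{n_T}^{n}\ln(cx)\,dx$; monotonicity of the integrand then gives an $O(\sqrt{n}\ln n)$ error. The paper bypasses integrals altogether with a telescoping trick: since $n_i^{\sqrt{n_i}}\geq n_i!/n_{i+1}!$, one gets
\[
\prod_{i=0}^{T-1}(cn_i)^{\sqrt{n_i}}\;\geq\;c^{\sum_i\sqrt{n_i}}\prod_{i=0}^{T-1}\frac{n_i!}{n_{i+1}!}\;=\;c^{n-n_T}\cdot\frac{n!}{n_T!}\;\geq\;c^{n}\cdot\frac{n!}{(n^{7/8})!},
\]
which is $\exp(n\ln n-\Theta(n))$ by Stirling. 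This is shorter and avoids any analytic approximation.
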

\begin{proof}
We describe an algorithm on~$G$.
Let $\ordscript{S}{0}\in\ordsubs{V}{k-1}$ be arbitrary, set $G_{0}\coloneqq G$, set $n_{0}\coloneqq n$, and set $\gamma_{0}\coloneqq \gamma$. For each $i\geq 0$, set $n_{i+1}\coloneqq n_{i}-\sqrt{n_{i}}$, and set $\gamma_{i+1}\coloneqq \gamma_{i}-(n_{i})^{-2/3}$.
Set~$L$ to be the smallest index such that $n_{L}<n^{7/8}$.
Suppose we have already performed~$i$ steps of the algorithm, and obtained a $k$-graph~$G_{i}$ on~$n_{i}$ vertices satisfying $\delta(G_{i})\geq (1/2 +\gamma_{i})n_{i}$, and we have obtained $\ordscript{S}{i}\in\ordsubs{V(G_{i})}{k-1}$.
If~$\gamma_{i}<\gamma/2$ or $i=L$, then we terminate the algorithm.
Otherwise, we apply Lemma~\ref{iteration} to~$G_{i}$ to obtain a set~$\cP_{i+1}$ of $\sqrt{n_{i}}$-paths, each with chosen starting tuple~$\ordscript{S}{i}$.
Choose $P_{i+1}\in\cP_{i+1}$ arbitrarily, let~$\ordscript{T}{i+1}$ be the non-$\ordscript{S}{i}$ end of~$P_{i+1}$, and put~$\ordscript{S}{i+1}\coloneqq\revscript{T}{i+1}$.
Set $G_{i+1}\coloneqq G_{i}-(V(P_{i+1})\setminus \unord{S}_{i+1})$.
Observe that by Lemma~\ref{iteration}(iii), we have $\delta(G_{i+1})\geq (1/2+\gamma_{i+1})n_{i+1}$.

Let $r_{i}\coloneqq \sum_{j=0}^{i-1}(n_{j})^{-2/3}$.
Note that, provided $\gamma_{i-1}\geq\gamma/2$, we have $\gamma_{i}=\gamma-r_{i}$.
We claim that the algorithm does not terminate in the first~$L$ steps.
To see this, note that it suffices to show that $r_{L}=o(1)$.
Write $\kappa_{i}\coloneqq \sqrt{n_{i}}$ and observe that $n_{L-1}=n-\sum_{j=0}^{L-2}\kappa_{j}\leq n-(L-1)\kappa_{L-1}$.
Re-arranging, we obtain that $L\leq 2n/\kappa_{L-1}$.
Using $n_{L-1}\geq n^{7/8}$, we obtain that
\[
r_{L}\leq \frac{L}{(n_{L-1})^{2/3}}\leq\frac{2n}{(n_{L-1})^{7/6}}\leq 2n^{-1/48}=o(1),
\]
so the algorithm does not terminate in the first~$L$ steps, as claimed.
When the algorithm terminates, we have obtained tight paths $P_{1},\dots, P_{L}$.
By construction, we may concatenate these paths, in order, to obtain a path $Q\coloneqq \bigcup_{i\leq L}P_{i}$ of length $n-n_{L}\geq n-n^{7/8}$.
Let~$N$ be the number of tight paths of length~$n-n_{L}$ in~$G$.
By Lemma~\ref{iteration}(i), there is a positive constant~$c<1$ such that the number of choices for~$P_{i+1}$ is at least~$(cn_{i})^{\kappa_{i}}$, for each $i\in\{0,\dots,L-1\}$.
Thus, we obtain
\[
N \geq \prod_{i=0}^{L-1}(cn_{i})^{\kappa_{i}} \geq c^{n}\prod_{i=0}^{L-1}\frac{n_{i}!}{n_{i+1}!}=c^{n}\frac{n!}{n_{L}!}\geq c^{n}\frac{n!}{(n^{7/8})!} = \exp(n\ln n-\Theta(n)).
\]
\end{proof}
We are now ready to prove Theorem~\ref{hamcycs}.
\lateproof{Theorem~\ref{hamcycs}}
The upper bound holds trivially.
To prove the lower bound, we choose a set $W\subseteq V$ of size~$n^{9/10}$ uniformly at random.
A simple application\COMMENT{
For each $\unord{S}\in\unordsubs{V}{k-1}$, put $X_{\unord{S}}=|N_{G}(\unord{S})\cap W|$.
Then $X_{\unord{S}}\sim\text{hyp}(|W|, d_{G}(\unord{S}),n)$, so that $\expn{X_{\unord{S}}}\geq (1/2 +\gamma)|W|$.
Applying Lemma~\ref{chernoff} we see $\prob{X_{\unord{S}}\leq(1/2+\gamma/2)|W|}\leq\prob{X_{\unord{S}}\leq\expn{X_{\unord{S}}}-(\gamma/2)|W|} \leq \exp(-\gamma^{2}|W|/8)=\exp(-\Theta(n^{9/10}))$.
Lastly note $n^{k-1}\exp(-\Theta(n^{9/10}))=\exp((k-1)\ln n -\Theta(n^{9/10}))=o(1)$.}
of Lemma~\ref{chernoff} shows that there is a choice of~$W$ such that $|N(\unord{S})\cap W|\geq(1/2 + 3\gamma/4)|W|$ for all $\unord{S}\in\unordsubs{V}{k-1}$.
Fix such a choice of~$W$, set $G'\coloneqq G-W$, and put $n'\coloneqq n-n^{9/10}$.
Then~$G'$ is $\gamma/2$-Dirac, and we apply Lemma~\ref{algorithmic} to~$G'$ (with~$\gamma/2$ playing the role of~$\gamma$) to find a set~$\cP$ of tight paths of length at least $n' - (n')^{7/8}$ in~$G'$, such that $|\cP|\geq\exp(n'\ln n' -\Theta(n'))=\exp(n\ln n - \Theta(n))$.
Fix $P\in\cP$, let~$\ordscript{S}{P}$ and~$\ordscript{T}{P}$ be the ends of~$P$, and let $U_{P}\coloneqq V(G')\setminus V(P)$, so that $|U_{P}|\leq (n')^{7/8}\leq n^{7/8}$.
Notice that~$G[W\cup U_{P}\cup\unord{S}_{P}\cup\unord{T}_{P}]$ is $\gamma/2$-Dirac.
Thus, by Lemma~\ref{hampath}, there is a tight Hamilton path~$Q_{P}$ of~$G[W\cup U_{P}\cup\unord{S}_{P}\cup\unord{T}_{P}]$ with ends~$\revscript{S}{P}$ and~$\revscript{T}{P}$.
Then $C_{P}\coloneqq P\cup Q_{P}$ is a tight Hamilton cycle of~$G$.

Define $\cC\coloneqq\{C_{P}\colon P\in\cP\}$ and note that for each $C\in\cC$, the number of $P\in\cP$ with $C=C_{P}$ is at most~$n^{2}$, since~$P$ must be a subpath of~$C$.
We conclude that~$\cC$ is a set of at least $n^{-2}\exp(n\ln n-\Theta(n))=\exp(n\ln n-\Theta(n))$ tight Hamilton cycles in~$G$.
\endproof
Finally, we prove Corollary~\ref{hamellcycles}.
\lateproof{Corollary~\ref{hamellcycles}}
Let $\gamma>0$ be fixed, let $k\geq2$, $\ell\in\{0,\dots,k-1\}$, $(k-\ell)\mid n$, and let~$G$ be a $k$-graph on~$n$ vertices satisfying $\delta(G)\geq (1/2+\gamma)n$.
Firstly, suppose $\ell=0$, and recall that the number of Hamilton $0$-cycles of~$G$ is precisely the number of perfect matchings of~$G$.
By considering the number of perfect matchings in the complete $k$-graph on~$n$ vertices, it is easy to see that the upper bound of (i) holds.\COMMENT{Suppose $k\mid n$, and let~$m$ be the number of perfect matchings of the complete $k$-graph on~$n$ vertices.
Then observe that
\[
m=\frac{\binom{n}{k}\binom{n-k}{k}\dots\binom{k}{k}}{\left(\frac{n}{k}\right)!}=\frac{n!}{(k!)^{n/k}\left(\frac{n}{k}\right)!}=\exp\left(\left(1-\frac{1}{k}\right)n\ln n -\Theta(n)\right).
\]}
We now use Theorem~\ref{hamcycs} to show that the lower bound holds.
Let~$\cM$ be the set of perfect matchings of~$G$, and let~$\cC$ be the set of tight Hamilton cycles of~$G$.
Notice that, for any $M\in\cM$, there are at most~$(n/k)!(k!)^{n/k}$ choices of $C\in\cC$ such that $M\subseteq E(C)$, because we may construct all vertex orderings corresponding to possible such~$C$ by reordering the edges of~$M$ and the vertices within them.
By applying Theorem~\ref{hamcycs}, we conclude that
\[
|\cM|\geq\frac{|\cC|}{\left(\frac{n}{k}\right)!(k!)^{n/k}} = \exp\left(\left(1-\frac{1}{k}\right)n\ln n -\Theta(n)\right).
\]
For the case $\ell\in[k-1]$, firstly notice that~$\frac{k-\ell}{2}(n-1)!=\exp(n\ln n-\Theta(n))$ is a trivial upper bound for the number of Hamilton $\ell$-cycles of~$G$. Finally, it suffices to apply Theorem~\ref{hamcycs} to~$G$ and observe that every tight Hamilton cycle of~$G$ contains~$k-\ell$ Hamilton $\ell$-cycles (since $(k-\ell)\mid n$), and each Hamilton $\ell$-cycle of~$G$ is contained in at most $(k!)^{n/(k-\ell)}$ tight Hamilton cycles.
\endproof
\section{Concluding remarks}
Though Theorem~\ref{hamcycs} holds in $\gamma$-Dirac $k$-graphs with equality, we believe that the error bound can be made more precise.
More specifically, we believe the following hypergraph version of~\cite[Theorem 1.1]{CK09b} holds, giving a more accurate lower bound for the number of tight Hamilton cycles in such hypergraphs.
\begin{conj}\label{hamcycconj}
For a fixed integer $k\geq 2$ and a fixed constant $\gamma>0$, the number of tight Hamilton cycles of a $k$-graph~$G$ on~$n$ vertices with $\delta(G)\geq(1/2+\gamma)n$ is at least $(1/2-o(1))^{n}n!$.
\end{conj}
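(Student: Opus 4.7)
The plan is to mimic the Cuckler--Kahn strategy from the graph case, retaining the random walk infrastructure of Sections~\ref{fractional}--\ref{countingshort} but replacing the weights. The bottleneck in the present paper's lower bound is that the $C$-normal fractional matching $\weighting{x}$ from Lemma~\ref{normality} only yields transition probabilities bounded above by $p = 16C^{2}/n$, and hence a count of the form $(n/16C^{2})^{\sqrt{n}}$ per iteration. After iterating through $n - o(n)$ vertices this produces $(n/16C^{2})^{n - o(n)}$, which by Stirling is $n!/(16eC^{2})^{n(1-o(1))}$. To reach $(1/2 - o(1))^{n}n!$ we need the weights to be essentially entropy-optimal, so that the maximum transition probability at every step is $(2+o(1))/n$ rather than merely $O(1/n)$.

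The first step would be to replace $\weighting{x}$ by the solution of the concave program
\[
\max_{\weighting{y}} H(\weighting{y}) \coloneqq -\sum_{e \in E(G)}\weightel{y}{e}\ln\weightel{y}{e}
\qquad \text{subject to} \qquad \sum_{e \ni v}\weightel{y}{e} = 1 \text{ for all } v \in V,
\]
which is a direct hypergraph analogue of Cuckler--Kahn's entropy functional on the perfect fractional matching polytope of $G$. One would then establish two structural facts about the optimiser $\weighting{x}^{*}$: \emph{(a)} it is $C$-normal in the sense of~\eqref{eq:doublebound} for some $C = C(\gamma,k)$, and \emph{(b)} for every non-isolated $\unord{S} \in \unordsubs{V}{k-1}$, the transition probabilities $\weightel{x^{*}}{\unord{S}\cup\{v\}}/\sum_{w}\weightel{x^{*}}{\unord{S}\cup\{w\}}$ are at most $(2+o(1))/n$ and at least $(2-o(1))/n$ on the support. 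Property~(a) would be proved via a switching argument akin to Lemma~\ref{perfect}, exploiting KKT optimality to show that $\weighting{x}^{*}$ is not dominated too strongly by any single edge. Property~(b) is the hypergraph analogue of Cuckler--Kahn's ``degree-to-row-sum'' identity for the entropy-maximising $1$-factor distribution and is the crux of the improvement: in a $\gamma$-Dirac $k$-graph each $(k-1)$-tuple has at least $(1/2+\gamma)n$ neighbours, and entropy maximisation spreads the mass nearly uniformly across them.

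With such $\weighting{x}^{*}$ in hand, I would re-run the proof of the iteration lemma (Lemma~\ref{iteration}) verbatim: the mixing and uniformity lemmas (Lemmas~\ref{mixing} and~\ref{uniformity}) only use normality and the near-perfect-matching property, both of which $\weighting{x}^{*}$ inherits, and the Azuma-based ``goodness'' argument (Lemma~\ref{goodwalk}) depends only on these. The conclusion would then be that each iteration produces at least $(1/2-o(1))^{\sqrt{n}}\cdot n^{\sqrt{n}}/\sqrt{n}!$ tight $\sqrt{n}$-paths with one end fixed, since the probability of any specific outcome is now at most $((2+o(1))/n)^{\sqrt{n}}$ and the success probability is still $\ge 1/2$. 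Multiplying through the $L$ iterations of Lemma~\ref{algorithmic} and applying Stirling yields $|\cP| \ge (1/2-o(1))^{n}\,n!$ long paths. The absorbing step from the proof of Theorem~\ref{hamcycs} converts these into the same number of tight Hamilton cycles (up to a polynomial factor that is absorbed into the $o(1)$).

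The main obstacle is unquestionably step~(b) above. In the graph case Cuckler--Kahn exploit that the entropy-maximising fractional $2$-factor is determined by a pair of scaling vectors (a kind of Sinkhorn normalisation), and then the edge weight at $uv$ factors as $\lambda_{u}\lambda_{v}$, giving the clean row-sum identity. No such factorisation is available for $k \ge 3$: the Lagrange multipliers $\lambda_{v_{1}},\dots,\lambda_{v_{k}}$ only determine $\weightel{x^{*}}{e}$ up to an exponential of their sum, and the resulting constraints on row/codegree sums form a nonlinear system whose solution is not known to concentrate as sharply as needed. Overcoming this likely requires either a hypergraph Brègman-type permanent inequality with matching lower bound (an open problem in its own right), or a direct probabilistic argument showing that random perfect matchings in $\gamma$-Dirac $k$-graphs already achieve the entropy bound up to additive $o(n)$ terms; the latter would circumvent the need to analyse $\weighting{x}^{*}$ explicitly but seems to require new concentration tools beyond the switching method of Section~\ref{fractional}.
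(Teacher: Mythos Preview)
The statement you are addressing is Conjecture~\ref{hamcycconj}, which the paper explicitly leaves open; there is no proof in the paper to compare against. The concluding remarks merely note that the entropy-based formula of Cuckler--Kahn for $k=2$ is the natural template and that ``it would be interesting to see if this (or a similar) notion can be extended to $k\geq 3$.'' Your proposal is therefore not a proof but a research plan, and you are candid about this: you isolate step~(b) --- controlling the transition probabilities of the entropy-maximising perfect fractional matching --- as the missing ingredient, and you explain correctly why the Sinkhorn-type factorisation underlying the Cuckler--Kahn argument breaks down for $k\geq 3$.

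Your diagnosis of the bottleneck is accurate and matches the paper's own assessment. Two small remarks on the plan as written. First, the claim that the entropy maximiser has transition probabilities ``at least $(2-o(1))/n$ on the support'' cannot be right in general: a $(k-1)$-tuple whose codegree is close to $n$ will have transition probabilities close to $1/n$, not $2/n$. What you actually need for the count is only the upper bound $(2+o(1))/n$ (or, more precisely, that the \emph{product} of transition probabilities along a typical walk is at most $(2+o(1))^{\kappa}/n^{\kappa}$), and this is exactly what an entropy lower bound of the form $H(\weighting{x}^{*})\geq \frac{n}{k}\ln(n/2) - o(n)$ would deliver. Second, re-running Lemma~\ref{iteration} ``verbatim'' is slightly optimistic: at each outer iteration of Lemma~\ref{algorithmic} you would need to recompute the entropy maximiser on the residual graph and re-establish (a) and (b) there, so (b) must hold uniformly over all $(\gamma/2)$-Dirac $k$-graphs, not just the original~$G$. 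Neither of these points is the real obstacle; as you say, the real obstacle is proving (b) (equivalently, a sharp entropy lower bound for the perfect fractional matching polytope of a Dirac $k$-graph), and that remains open.
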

It would of course be desirable to obtain a formula for the number of tight Hamilton cycles in $\gamma$-Dirac $k$-graphs~$G$ which takes properties of~$G$ like the degrees and codegrees into account.
We recall that such a formula has already been obtained~\cite[Theorem 1.3, Theorem 1.5]{CK09b} in terms of the `entropy of~$G$' in the $k=2$ case, and it would be interesting to see if this (or a similar) notion can be extended to $k\geq 3$.

Finally, we note that the results of~\cite{CK09b} show that graphs with minimum degree precisely at the threshold for Hamiltonicity in fact have many Hamilton cycles.
The exact minimum codegree threshold for existence of a tight Hamilton cycle in $k$-graphs on~$n$ vertices is not yet known for $k\geq 4$, but is known to be~$\lfloor n/2\rfloor$ in the case $k=3$~\cite[Theorem 1.2]{RRS11}, and it is of course a natural question to ask if the conclusions of Theorem~\ref{hamcycs} or indeed Conjecture~\ref{hamcycconj} hold in this exact setting.

\bibliographystyle{amsplainv2}

\bibliography{References}

\APPENDIX{
\renewcommand{\thetheorem}{\Alph{theorem}}
\setcounter{theorem}{0}
\appendix\section*{Appendix: Proof of Proposition~\ref{tidytool}}
We first need the following Chernoff-type bound (see~\cite{CL06} and~\cite{JLR00} for example), and a well-known result on the probability that a binomially distributed random variable assumes its mean value.
\begin{lemma}\label{weightedchernoff}
Let $X_{1},\dots, X_{n}$ be independent Bernoulli random variables with $\prob{X_{i}=1}=p_{i}$ for each~$i\in [n]$.
Let $a_{1},\dots,a_{n}\geq 0$ with $\sum_{i=1}^{n}a_{i}>0$, set $X=\sum_{i=1}^{n}a_{i}X_{i}$, and define $\nu\coloneqq\sum_{i=1}^{n}a_{i}^{2}p_{i}$.
Then $\prob{X\leq\expn{X}-t}\leq e^{-t^{2}/(2\nu)}$.
\end{lemma}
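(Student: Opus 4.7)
The plan is to prove this weighted Chernoff lower-tail bound via the standard exponential moment (Cram\'er--Chernoff) method, adapted to the weighted setting. The weights $a_i$ rescale the Bernoulli variables but do not destroy independence or boundedness, so the classical argument goes through with only minor bookkeeping.

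First I would set up the exponential Markov inequality. For any $\lambda>0$,
\[
\prob{X\leq\expn{X}-t}=\prob{e^{-\lambda X}\geq e^{-\lambda\expn{X}+\lambda t}}\leq e^{\lambda\expn{X}-\lambda t}\,\expn{e^{-\lambda X}}.
\]
By independence of the $X_i$,
\[
\expn{e^{-\lambda X}}=\prod_{i=1}^{n}\expn{e^{-\lambda a_i X_i}}=\prod_{i=1}^{n}\bigl(1-p_i+p_i e^{-\lambda a_i}\bigr).
\]

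Next I would bound each factor. Using $1+y\leq e^{y}$ with $y=p_i(e^{-\lambda a_i}-1)$, one gets $1-p_i+p_i e^{-\lambda a_i}\leq\exp\!\bigl(p_i(e^{-\lambda a_i}-1)\bigr)$. Since $a_i\geq 0$ and $\lambda>0$, we have $\lambda a_i\geq 0$, and the elementary inequality $e^{-x}\leq 1-x+x^{2}/2$ for $x\geq 0$ (which follows from the alternating Taylor expansion of $e^{-x}$) gives
\[
p_i(e^{-\lambda a_i}-1)\leq p_i\!\left(-\lambda a_i+\tfrac{\lambda^{2}a_i^{2}}{2}\right).
\]
Summing over $i$ and recalling $\expn{X}=\sum_i a_i p_i$ and $\nu=\sum_i a_i^{2}p_i$,
\[
\expn{e^{-\lambda X}}\leq\exp\!\left(-\lambda\expn{X}+\tfrac{\lambda^{2}\nu}{2}\right).
\]

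Substituting this back into the Markov bound yields
\[
\prob{X\leq\expn{X}-t}\leq\exp\!\left(-\lambda t+\tfrac{\lambda^{2}\nu}{2}\right),
\]
valid for every $\lambda>0$. I would then optimize by choosing $\lambda=t/\nu$ (which is positive since $t>0$ and $\nu>0$ under the assumption $\sum a_i>0$ together with the case $\nu=0$ being trivial, in which $X$ is almost surely constant), giving the advertised bound $\exp(-t^{2}/(2\nu))$.

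The argument is entirely standard; there is no real obstacle. The only small care needed is the edge case $\nu=0$ (handled by noting $X$ is deterministic, so the tail probability is zero unless $t\leq 0$, in which case the inequality is vacuous), and verifying the scalar inequality $e^{-x}\leq 1-x+x^{2}/2$ on $[0,\infty)$, both of which are routine.
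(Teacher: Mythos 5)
Your proof is correct. The paper does not prove this lemma at all --- it is stated as a known Chernoff-type bound with citations to standard references --- and your argument is exactly the standard Cram\'er--Chernoff exponential-moment proof one finds there: Markov applied to $e^{-\lambda X}$, the factorisation by independence, the bounds $1+y\leq e^{y}$ and $e^{-x}\leq 1-x+x^{2}/2$ for $x\geq 0$, and optimisation at $\lambda=t/\nu$, with the degenerate case $\nu=0$ correctly dispatched.
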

\begin{lemma}\label{littlehelper}
Let $1/n \ll 1/m \leq 1$, with $m,n\in\bN$, and let~$X$ be a binomial random variable with parameters~$n$ and~$p\coloneqq m/n$.
Then $\pr[X=m]\geq 1/(4\sqrt{m})$.\COMMENT{I prove this with asymptotic notation, which suffices as we can just take~$n$ large enough in comparison to~$m$. So suppose we have $m^{2}=o(n)$. Note firstly that $m!\leq e^{1-m}m^{m+1/2}$ and $(1-p)^{n}=e^{-np+O(np^{2})}=e^{-m+O(m^{2}/n)}=e^{-m+o(1)}$. Now observe that $\prob{X=m} =\binom{n}{m}p^{m}(1-p)^{n-m}\geq m!^{-1}(n-m)^{m}p^{m}(1-p)^{n-m}=m!^{-1}(p(n-m)/(1-p))^{m}(1-p)^{n}=m!^{-1}m^{m}(1-p)^{n}\geq e^{m-1}m^{-m-1/2}m^{m}e^{-m+o(1)}=e^{-1+o(1)}/\sqrt{m}\geq1/4\sqrt{m}$.}
\end{lemma}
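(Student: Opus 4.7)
The plan is to write $\pr[X=m] = \binom{n}{m}p^m(1-p)^{n-m}$ explicitly and lower-bound each factor using the fact that the hypothesis $1/n \ll 1/m$ forces $m^2/n = o(1)$, so various second-order error terms are negligible. Concretely, the target $1/(4\sqrt{m})$ is a constant factor away from the Stirling-type lower bound $e^{-1}/\sqrt{m}$, so the calculation only needs to keep track of multiplicative constants that are close to $1$.

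First I would rewrite the binomial coefficient times $p^m$ as
\[
\binom{n}{m}\left(\frac{m}{n}\right)^m = \frac{m^m}{m!}\prod_{i=0}^{m-1}\left(1-\frac{i}{n}\right).
\]
The product is at least $\exp(-\sum_{i=0}^{m-1}(i/n + O(i^2/n^2))) = \exp(-m^2/(2n) + O(m^3/n^2)) = 1-o(1)$ by the hypothesis $1/n \ll 1/m$. Similarly, taking logs,
\[
(n-m)\ln(1 - m/n) = -m + \frac{m^2}{2n} + O(m^3/n^2) = -m + o(1),
\]
so $(1-m/n)^{n-m} \geq (1-o(1))e^{-m}$.

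Next I apply the Stirling upper bound $m! \leq e\, m^{m+1/2}\,e^{-m}$ (valid for all $m \geq 1$) to obtain
\[
\frac{m^m}{m!} \geq \frac{e^{m-1}}{\sqrt{m}}.
\]
Combining the three estimates gives
\[
\pr[X=m] \geq (1-o(1))\cdot\frac{e^{m-1}}{\sqrt{m}}\cdot e^{-m} = \frac{(1-o(1))\,e^{-1}}{\sqrt{m}} \geq \frac{1}{4\sqrt{m}},
\]
where the last inequality holds provided $n$ is large enough compared to $m$, as guaranteed by $1/n \ll 1/m$.

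There is no real obstacle here; the only point requiring any care is keeping the error terms $m^2/n$ and $m^3/n^2$ explicitly small, which is immediate from the hierarchy. The proof is essentially the computation sketched in the commented-out text accompanying the lemma, so I would just present it cleanly in about half a page.
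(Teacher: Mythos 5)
Your proof is correct and follows essentially the same route as the paper's: a direct computation of $\binom{n}{m}p^m(1-p)^{n-m}$ using the Stirling upper bound $m!\leq e\,m^{m+1/2}e^{-m}$ and the hierarchy $m^2/n=o(1)$ to control the error terms, landing at $(1-o(1))e^{-1}/\sqrt{m}\geq 1/(4\sqrt{m})$. The only cosmetic difference is that the paper sidesteps the estimate of $\prod_{i=0}^{m-1}(1-i/n)$ via the exact identity $p(n-m)/(1-p)=m$ after bounding $\binom{n}{m}\geq (n-m)^m/m!$, but both versions are equally valid.
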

We now prove Proposition~\ref{tidytool}.
\lateproof{Proposition~\ref{tidytool}}
For any subset $\cQ\subseteq\cP$, let~$V(\cQ)$ denote the set of all vertices in any $\ell$-set $A\in\cQ$.
Define $p\coloneqq m/|\cP\setminus\cP_{0}|$.
We construct a random set $X\subseteq\cP\setminus\cP_{0}$ by including each $\ell$-set $A\in\cP\setminus\cP_{0}$ independently with probability~$p$.
Let~$Y\coloneqq X\cup\cP_{0}$, and define the events $\cE_{1}\coloneqq\{|X|=m\}$ and $\cE_{2}\coloneqq\bigcap_{\unord{S}\in\unordsubs{V(Y)}{k-1}}\{d_{G[V(Y)]}(\unord{S})\geq(1/2+\gamma/2)\ell(m+t)\}$.
Let~$\mathrm{\mathbb{P}}_{b}$ be the probability measure for the space corresponding to constructing~$X$.
Notice then that $\prob{\,\cdot\,}=\probb{\,\cdot\mid\cE_{1}}$.
It remains to prove that $\probb{(\cE_{2})^{c}\mid\cE_{1}}\leq e^{-\sqrt{m}}$.
Write $M\coloneqq\binom{V(G)}{k-1}$, and for each $\unord{S}\in M$ write $d_{Y}(\unord{S})\coloneqq |N_{G}(\unord{S})\cap V(Y)|$, write $\cP(\unord{S})\coloneqq\{A\in\cP\colon \unord{S}\cap A\neq\emptyset\}$, write $\cP_{\unord{S}}\coloneqq\cP\setminus(\cP_{0}\cup\cP(\unord{S}))$, and write $J_{\unord{S}}\coloneqq N_{G}(\unord{S})\cap V(\cP_{\unord{S}})$.
Notice that $d_{Y}(\unord{S})\geq d'_{Y}(\unord{S})\coloneqq|N_{G}(\unord{S})\cap V(Y)\cap V(\cP_{\unord{S}})|$.
Fix $\unord{S}\in M$.
Notice that $|J_{\unord{S}}|\geq (1/2+3\gamma/4)n$, since $|V(\cP_{0}\cup\cP(\unord{S}))|\leq \ell(t+k)$.
Observe that\COMMENT{Note that $np=nm/(n/\ell-t) =m\ell/(1-t\ell/n)\geq m\ell$. (Similarly $np\leq 2m\ell$.)}
\[
\expnb{d'_{Y}(\unord{S})}=|J_{\unord{S}}|p\geq(1/2+3\gamma/4)np\geq(1/2+2\gamma/3)\ell(m+t).
\]
For each $\ell$-set $A\in\cP_{\unord{S}}$, let~$Y_{A}$ be the indicator random variable for the event~$\{A\in X\}$, and let $c_{A}\coloneqq|A\cap N_{G}(\unord{S})|$.
Then we have $d'_{Y}(\unord{S})=\sum_{A\in\cP_{\unord{S}}}c_{A}Y_{A}$, and by applying Lemma~\ref{weightedchernoff} to~$d'_{Y}(\unord{S})$, we obtain\COMMENT{Notice that $\sum_{A}c_{A}^{2}p\leq (n/\ell)\ell^{2}p\leq 2m\ell^{2}$. Applying Lemma~\ref{weightedchernoff} to~$d'_{Y}(\unord{S})$, we obtain $\probb{d'_{Y}(\unord{S})<(1/2+\gamma/2)\ell(m+t)}\leq\probb{d'_{Y}(\unord{S})\leq\expnb{d'_{Y}(\unord{S})}-\gamma \ell(m+t)/6} \leq \exp(-\gamma^{2} (m+t)^{2}\ell^{2}/144m\ell^{2}) \leq e^{-3\sqrt{m}}$.}
$\probb{d'_{Y}(\unord{S})<(1/2+\gamma/2)\ell(m+t)}\leq e^{-3\sqrt{m}}$.
Note that for any $\unord{S}\in M$ and any $v\in J_{\unord{S}}$, the events~$\{\unord{S}\subseteq V(Y)\}$ and~$\{v\in V(Y)\}$ are independent by construction.
Let $\Tilde{\cP}(\unord{S})\coloneqq\cP(\unord{S})\setminus\cP_{0}$, and for each $0\leq j\leq k-1$, let $M_{j}\coloneqq\{\unord{S}\in M\colon |\Tilde{\cP}(\unord{S})|=j\}$.
Note that for each $\unord{S}\in M_{j}$, we have $\prob{\unord{S}\subseteq V(Y)}=p^{j}$, and note further that $|M_{j}|\leq (\ell t)^{k}n^{j}$, for each~$j$.
Then by a union bound over all $\unord{S}\in M$ we obtain\COMMENT{Note that $\probb{\unord{S}\subseteq V(Y), d_{Y}(\unord{S})<(1/2+\gamma/2)\ell(m+t)}\leq\probb{\unord{S}\subseteq V(Y), d'_{Y}(\unord{S})<(1/2+\gamma/2)\ell(m+t)}$ for each~$\unord{S}$, since the former event implies the latter.}
\begin{eqnarray*}
\probb{(\cE_{2})^{c}} & \leq & \sum_{j=0}^{k-1}\sum_{\unord{S}\in M_{j}}\probb{\unord{S}\subseteq V(Y), d'_{Y}(\unord{S})<(1/2+\gamma/2)\ell(m+t)}\\ & \leq & e^{-3\sqrt{m}}\sum_{j=0}^{k-1}(\ell t)^{k}(np)^{j} \leq e^{-3\sqrt{m}}\sum_{j=0}^{k-1}(\ell t)^{k}(2m\ell)^{j} \leq e^{-2\sqrt{m}}.
\end{eqnarray*}
Finally, by Lemma~\ref{littlehelper} we have $\probb{\cE_{1}}\geq 1/(4\sqrt{m})$, so we conclude that $\probb{(\cE_{2})^{c}\mid\cE_{1}}\leq\probb{(\cE_{2})^{c}}/\probb{\cE_{1}}\leq e^{-\sqrt{m}}$.
\endproof}

\small
\vskip12mm

\noindent
Stefan Glock
{\tt <stefan.glock@eth-its.ethz.ch>}\\
ETH Institute for Theoretical Studies\\
Clausiusstrasse 47\\
8092 Z\"urich\\
Switzerland\\

\small
\noindent
Stephen Gould, Daniela K\"uhn, Deryk Osthus
{\tt <\{spg377, d.kuhn, d.osthus\}@bham.ac.uk>}\\
School of Mathematics\\
University of Birmingham\\
Edgbaston\\
Birmingham\\
B15 2TT\\
United Kingdom\\

\small
\noindent
Felix Joos
{\tt <joos@informatik.uni-heidelberg.de>}\\
Institut f\"{u}r Informatik\\
Heidelberg University\\
Germany\\
\end{document}